\documentclass[reqno,10pt]{amsart}
%[RBook,graybox]{ocl-gehrke2}

%\usepackage{proof}
\usepackage{geometry}
\usepackage{enumerate,xspace}
\usepackage{amsmath}
\usepackage{amssymb}
\usepackage{amsthm}
\usepackage{stmaryrd}
\usepackage[colorlinks]{hyperref}
\usepackage[all,2cell]{xy}
\usepackage{mathrsfs,latexsym,graphicx,pifont}
\usepackage{framed}
\usepackage{xcolor}

\definecolor{shadecolor}{gray}{0.9}

\newenvironment{svgraybox}
  {\begin{shaded}}
  {\end{shaded}}

\xyoption{all}\xyoption{2cell}
%\UseAllTwocells

%\CompileMatrices

%\input{diagxy}

\setlength{\textwidth}{120mm}
\setlength{\textheight}{191mm}
\setlength{\topmargin}{0cm}
\setlength\oddsidemargin{63pt}
\setlength\evensidemargin{63pt}
\setlength\marginparwidth{90pt}
\setlength\headsep{12pt}

\newtheorem{theorem}{Theorem}[section]
\newtheorem{lemma}[theorem]{Lemma}
\newtheorem{corollary}[theorem]{Corollary}
\newtheorem{proposition}[theorem]{Proposition}

%%%%%%%%%%%%%%%%%%%%%%%%%%%%%%
%% Theorems, propositions, lemmas, corollaries in italic %%
%%%%%%%%%%%%%%%%%%%%%%%%%%%%%%

\theoremstyle{definition}
\newtheorem{definition}[theorem]{Definition}
\theoremstyle{remark}
\newtheorem{example}[theorem]{Example}
\theoremstyle{remark}

\theoremstyle{remark}

\theoremstyle{remark}

\theoremstyle{remark}
\newtheorem{remark}[theorem]{Remark}
\theoremstyle{remark}
\newtheorem{exercise}[theorem]{Exercise}
\theoremstyle{remark}

\newcommand{\Set}{\ensuremath{\mathsf{Set}}}

\newcommand{\dd}{\ensuremath{\mathsf{d}}}
\newcommand{\cc}{\ensuremath{\mathsf{c}}}
\newcommand{\mm}{\ensuremath{\mathsf{m}}}

\newcommand{\Sh}{\mathsf{Sh}}

\newcommand{\id}{\mathtt{I}}

\newcommand{\zero}{\mathbf{0}}

\newcommand{\cA}{\mathcal{A}}
\newcommand{\G}{\mathcal{G}}
\newcommand{\cX}{\mathcal{X}}

\newcommand{\op}{\mathrm{op}}

\newcommand{\B}{{\ensuremath{\mathscr{B}}}\xspace}

\newcommand{\sG}{{\ensuremath{\mathscr{G}}}\xspace}

\newcommand{\eS}{{\ensuremath{\mathscr{S}}}\xspace}

\newcommand{\X}{{\ensuremath{\mathscr{X}}}\xspace}

%%%%%%%%%%%%%%%%

\newcommand{\ra}{\longrightarrow}
\def\iso{\cong}

\def\adj{\dashv}

%%%%%%%%%%%%%%%%

\begin{document}

\title{Locally involutive semigroups}

\author{Clemens Berger and Jonathon Funk}

\address{Universit\'e C\^ote d'Azur and CUNY Queensborough}
\email{clemens.berger@univ-cotedazur.fr and jfunk@qcc.cuny.edu}
\date{June 12, 2026}

\begin{abstract}We introduce locally involutive semigroups and embed them into the category of ordered groupoids.
This embedding restricts to a correspondence between quasi-involutive semigroups and ordered groupoids with mediator,
extending the classical ESN-correspondence between inverse semigroups and inductive groupoids.
An important subcategory of locally involutive semigroups is formed
by \emph{left involutive semigroups} because the classifying topos $\B(S)$ of an inverse semigroup $S$ is
equivalent to the category of left involutive semigroups \'etale over $S$ \cite{F7}.
We recover this equivalence from a general adjointness,
which we use to determine
when a left involutive semigroup \'etale over $S$ is actually an involutive semigroup.
Any left involutive semigroup \'etale over $S$ embeds
into an involutive $S$-algebra as we call it.
The underlying semigroup of this algebra is involutive.\end{abstract}

\maketitle

\newpage

\vspace*{\fill}

\tableofcontents

\vspace*{\fill}

\newpage

\section{Introduction}\label{sec:intro}
\vspace{2ex}

We introduce a new class of semigroups called locally involutive.
Locally involutive semigroups include the class known in the literature as regular $*$-semigroups,
but herein called involutive semigroups. Our first purpose is to establish
an ESN-correspondence between an intermediate class of quasi-involutive semigroups
and a class of ordered groupoids with mediator.
The mediator is a natural and basic idea that makes sense
even at the very general level of double categories.

The category of ordered actions of the so-called inductive groupoid
of an inverse semigroup $S$ is a topos,
called the classifying topos of $S$ and denoted $\B(S)\,$.
As the inductive groupoid with its Alexandrov topology is an \'etale groupoid,
$\B(S)$ is thus an instance of the
topos of \'etale actions of an \'etale groupoid
(or even generally with a localic groupoid).
The groupoid action approach can be simplified
producing a category of \'etale actions over the set of idempotents of $S$ \cite{FS}.
Another way to present $\B(S)$ uses presheaves on a certain left cancellative category $L(S)$ associated with $S\,$.
There is yet a fourth distinct way to present $\B(S)$
as a category of semigroups and homomorphisms over $S\,$.
This result is already published \cite{F7}, but here we give a new perspective
based on an adjointness between semigroups over $S$ and the classifying topos $\B(S)\,$.
This adjointness is shown to hold more generally when $S$ is a left involutive semigroup.

The adjointness at the center of this article is reminiscent of
the one for topological spaces over a space $X$ and its topos of sheaves $\Sh(X)\,$:
\[
\xymatrix{\text{Spaces}/X \ar@/_3ex/[rr]_{\Gamma} \ar@{}[rr]|{\bot}
  && {\Sh(X)} \ar@/_3ex/[ll]_{\Lambda}},
\]
where the sections functor $\Gamma$ sends a continuous map $f:Y\ra X$ to the sheaf
\[
\Gamma(f)(U)=\{ \;\text{continuous sections}\; s: U\ra Y \;\text{of}\; f\,\}\,,
\]
and its left adjoint assigns to a sheaf $F$ its bundle of germs $\Lambda(F)\,$.
The unit $F\ra \Gamma\Lambda(F)$ is an isomorphism for any sheaf $F$,
while the counit $\Lambda\Gamma(f)\ra f$ for a continuous map $f$
is an isomorphism just when $f$ is a local homeomorphism, also called an \'etale map.
Likewise, for a left involutive semigroup $S$ we shall define an adjointness
\[
\xymatrix{\text{Semigroups}/S \ar@/_3ex/[rr]_{\Gamma} \ar@{}[rr]|{\bot}
  && {\B(S)} \ar@/_3ex/[ll]_{\Lambda}}
\]
whose unit $P\ra \Gamma\Lambda(P)$ is an isomorphism for any object $P$ of $\B(S)\,$,
and whose counit $\Lambda\Gamma(f)\ra f$ at a $*$-homomorphism
$f:X\ra S$ is an isomorphism if and only if $X$ is a left involutive semigroup
and $f$ is \'etale (Def.\ \ref{def:etale} and Prop.\ \ref{prop:lift}).

Let us say that a $*$-semigroup is a semigroup $X$
equipped with a unary operation $*:X\ra X$ such that
$x^{**}=x$ and $xx^*x=x$ for all $x\in X\,$.
In order to spell out the details of the adjointness $\Lambda\adj \Gamma$
we shall develop some basic properties of $*$-semigroups which might be of independent interest.
In the literature (cf.\ \cite{NS})
a regular $*$-semigroup is defined as a $*$-semigroup for which $(xy)^*=y^*x^*$ holds for all
$x,y\in X\,$.
For consistency, the latter are herein called \emph{involutive semigroups\/}, cf.\  \cite{BB}.
The more general relation $(xy)^*=(x^*xy)^*x^*$ defines what we call a
\emph{left involutive semigroup\/}, introduced by the second-named author in \cite{F7}.

We shall say that a $*$-semigroup is birestrictive if it satisfies
$\dd(xy)\leq\dd(y)$ and $\cc(xy)\leq\cc(x)\,$, where $\dd(-)$ and $\cc(-)$ are the domain
and codomain operators.
By definition, {\em a quasi-involutive semigroup\/}
is a birestrictive, locally involutive semigroup (Def.\ \ref{def:invsemigroup}).
Any inverse semigroup is involutive, and any involutive semigroup is quasi-involutive.
We establish an \emph{Ehresmann-Schein-Nambooripad correspondence\/}
between quasi-involutive semigroups and ordered groupoids
with mediator (cf.\  Thm.\ \ref{thm:ESN}), generalising the classical correspondence between
inverse semigroups and inductive groupoids (cf.\ \cite{L}), as well as the correspondence between
involutive semigroups and chained projection groupoids, recently established in \cite{EA}.

Left involutive semigroups are locally involutive and corestrictive (cf.\  Lem.\ \ref{lem:birestrictive}),
but in general not birestrictive, so that our ESN-correspondence does not apply to them.
Nonetheless, for any \'etale $*$-homomorphism of left involutive semigroups $f:X\ra S$
there exists a uniquely determined presheaf $P_f$ on the left cancellable category $L(S)$ such that $\Lambda(P_f)=X\,$.
According to Thm.\ \ref{thm:BSleft} the resulting functor $\Lambda$
admits a right adjoint $\Gamma$ provided the morphisms in the category of semigroups
over $S$ are defined correctly by means of so-called left $*$-homomorphisms.

We use Thm.\ \ref{thm:BSleft} to establish a new presentation of $\B(S)$
as a category of what we call balanced $S$-sets (Def.\ \ref{def:bal} and Cor.\ \ref{cor:bal}).
In turn, this result helps us show in \S~\ref{sec:inv} that if $X$ is a left involutive semigroup,
then an \'etale $*$-homomorphism $X\ra S$
can be embedded in an involutive semigroup $Y$ and a $*$-homomorphism $Y\ra S$
as in the following diagram.
\[
\xymatrix{X \ar[rr] \ar[dr] && Y \ar[dl]\\
& S}
\]
The embedding $X\ra Y$ is an injective left $*$-homomorphism.
In effect, the left involutive semigroup $X$ can be made involutive,
but at the cost of giving up the \'etale nature of the structure map.
We introduce and develop what we call $S$-modules and algebras in
order to provide a conceptual explanation.

In summary, the classes of semigroups we work with are (in increasing generality) inverse semigroups,
involutive semigroups, left/right involutive semigroups, locally involutive semigroups, and
finally $*$-semigroups.
The class of quasi-involutive semigroups lies between involutive semigroups
and locally involutive semigroups,
but is different from the class of left/right involutive semigroups.
The natural class of morphisms in all cases are $*$-homomorphisms,
i.e., those morphisms preserving involution and binary product.
The inclusion of each class into another is then full and faithful.
It turns out that it is sometimes necessary when working in the slice category
over a \emph{left} involutive semigroup
to consider the larger class of \emph{left} $*$-homomorphisms.
\begin{figure}\caption{A map of terminology}
\[
\xymatrix{
&& \text{left involutive}  \ar@{^{(}->}[dr]  \\
\text{inverse} \ar@{^{(}->}[r] & \text{involutive}  \ar@{^{(}->}[ur]  \ar@{^{(}->}[dr]  \ar@{^{(}->}[r]
&   \text{quasi-involutive} \ar@{^{(}->}[r] & \text{locally involutive}\\
&& \text{right involutive}  \ar@{^{(}->}[ur] }
\]
\end{figure}

\begin{svgraybox}

\noindent\emph{Acknowledgements:}

Many thanks to the two referees who provided some valuable feedback leading to
a significant improvement and revision of the original manuscript.
The first-named author would like to thank
C\'elia Borlido and Mai Gehrke for several enlightening discussions on semigroups and related topics.

\end{svgraybox}

\section{Locally involutive semigroups}
\vspace{2ex}

\subsection{Terminology and basic properties}
\leavevmode\par\vspace{1ex}

An {\em involutive set\/} is a set $X$ equipped with an idempotent endomorphism,
throughout denoted $x\mapsto x^*\,$ so that $x^{**}=x$ for $x\in X\,$.

{\em A $*$-semigroup\/} is an involutive set $X$ equipped with an associative binary operation
$X\times X\ra X:(x,y)\mapsto xy$ such that $xx^*x=x$ (i.e.\ $x$ is {\em a partial isometry}) for all $x\in X\,$.
{\em A $*$-morphism\/} $f:X\ra S$ of involutive sets is a map $f:X\ra S$
such that $f(x^*)=f(x)^*$ for all $x\in X\,$.
A {\em $*$-homomorphism}
$f:X\ra S$ of $*$-semigroups is $*$-morphism
of the underlying involutive sets such that $f(xy)=f(x)f(y)\,$.

An element $x$ of a $*$-semigroup is {\em self-adjoint\/} if $x^*=x\,$, and {\em idempotent} if $x^2=x\,$.
Any self-adjoint, idempotent element is called {\em a projection}.
The set $E(X)$ of idempotent elements of a $*$-semigroup $X$
is partially ordered by $e\leq f$ if $e=ef=fe\,$.
We get a restricted partial order on the set $P(X)$ of projections of $X\,$.
For each element $x$ of a $*$-semigroup $X$ the product $x^*x$
is called the {\em domain} of $x$, and the product $xx^*$ is called the {\em codomain} of $x\,$.
We write $\dd(x)$ and $\cc(x)$ for domain and codomain of $x\,$,
yielding the usual absorption laws $\cc(x)x=x=x\dd(x)$ for any $x\in X\,$.
Domain and codomain of any $x\in X$ are idempotent, but in general not self-adjoint.

A $*$-semigroup $X$ is called {\em restrictive\/}
if $\dd(xy)\leq\dd(y)\,$, and {\em corestrictive\/} if $\cc(xy)\leq\cc(x)\,$.
It is called {\em birestrictive\/} if it is both restrictive and corestrictive.

{\em An involutive semigroup\/} is a $*$-semigroup such that the relation
$(xy)^*=y^*x^*$ holds for all $x,y\in X\,$.
In an involutive semigroup domain and codomain are projections.
Involutive semigroups have been introduced in \cite{NS}
under the name of {\em regular $*$-semigroups\/}, and shown in \cite{EA}
to correspond to a certain class of ordered groupoids,
extending the classical ESN-correspondence between inverse semigroups and inductive groupoids.
Involutive semigroups are birestrictive.

The objects of the {\em ordered groupoid\/} $\sG(X)$ associated with an involutive semigroup $X$
are the projections of $X\,$, and its morphisms are the elements $x\in X\,$,
where domain and codomain are given by $\dd(x)=x^*x$ and $\cc(x)=xx^*\,$,
the composition law by $x\circ y=xy\,$,
and the two-sided inverse of $x:\dd(x)\ra\cc(x)$ by $x^*:\dd(x^*)\ra\cc(x^*)\,$.

We have the following three generalisations of involutive semigroups, all having an associated ordered groupoid.

\begin{definition}\label{def:invsemigroup}
A {\em left involutive semigroup\/} is a $*$-semigroup such that $(xy)^*=(\dd(x)y)^*x^*$ for all $x,y\,$.
A {\em right involutive semigroup\/} is a $*$-semigroup such that $(xy)^*=y^*(x\cc(y))^*$ for all $x,y$.
A {\em locally involutive semigroup\/} is a $*$-semigroup such that the identity
$(xy)^*=y^*x^*$ holds whenever
$\dd(x)=\cc(y)$ or $x$ is a projection such that $x\leq\cc(y)$ or $y$ is a projection such that $y\leq\dd(x)\,$.
\end{definition}

A {\em quasi-involutive semigroup\/} is a birestrictive, locally involutive semigroup.

In a locally involutive semigroup, domain and codomain are projections.
Moreover, being a {\em left projection\/} (i.e.\ a fixpoint for $\dd$)
or being a {\em right projection} (i.e.\ a fixpoint for $\cc$) amounts to being a projection.

\begin{lemma}\label{lem:reduct}
Any involutive semigroup is left and right involutive.
Any left (or right) involutive semigroup is locally involutive.
\end{lemma}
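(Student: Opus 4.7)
The plan is to split the statement into three implications: (i) involutive implies left involutive (the right-involutive case being dual), and then (ii) left involutive implies locally involutive, (iii) right involutive implies locally involutive, again dual to (ii).

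For (i), the key fact is that in an involutive semigroup $\dd(x)=x^*x$ is a projection, hence self-adjoint. First I would expand the right-hand side of the left-involutive identity using the anti-homomorphism law: $(\dd(x)y)^*x^* = (x^*xy)^*x^* = y^*x^*(x^*)^*x^* = y^*(x^*xx^*)$. Then I apply the identity $x^*xx^*=x^*$, which falls out of $xx^*x=x$ by applying $*$ to both sides. This collapses the expression to $y^*x^*=(xy)^*$, giving the left-involutive identity. The same calculation with $(xy)^*=y^*x^*$ rewritten as $y^*(xx^*)^*=y^*\cc(x)^*$, using $\cc(y)y=y$ after symmetric manipulation, yields the right-involutive identity.

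For (ii), I would verify the three clauses in the definition of locally involutive one by one, using $(xy)^*=(\dd(x)y)^*x^*$.
\emph{Case} $\dd(x)=\cc(y)$: then $\dd(x)y=yy^*y=y$ by the partial-isometry law, so $(\dd(x)y)^*x^*=y^*x^*$.
\emph{Case} $y$ a projection with $y\leq\dd(x)$: the bound gives $\dd(x)y=y$, hence $(\dd(x)y)^*=y^*$ and the identity holds on the nose.
\emph{Case} $x$ a projection with $x\leq\cc(y)$: this is the subtle one, because with $\dd(x)=x$ the left-involutive identity applied to the pair $(x,y)$ only yields the tautology $(xy)^*=(xy)^*x^*$. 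The trick is to apply the left-involutive identity to the swapped pair $(y^*,x)$ instead: using $\dd(y^*)=y^{**}y^*=yy^*=\cc(y)$ and then $\cc(y)x=x$ (from $x\leq\cc(y)$), one computes $(y^*x)^*=(\dd(y^*)x)^*y^{**}=(\cc(y)x)^*y=x^*y=xy$. Taking $*$ of both sides and using $x=x^*$ yields $(xy)^*=y^*x=y^*x^*$.

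For (iii), I would carry out the same three cases from the defining identity $(xy)^*=y^*(x\cc(y))^*$ of a right involutive semigroup. The first two cases are immediate (using $x\cc(y)=x$ directly), and the case $y$ a projection with $y\leq\dd(x)$ becomes the mirror of the subtle case above and requires applying the right-involutive identity to $(y,x^*)$. The main obstacle in the whole proof is recognising this swap trick in case $x\leq\cc(y)$; everything else is direct substitution into the defining identities combined with the partial-isometry law $xx^*x=x$.
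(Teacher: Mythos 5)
Your proof is correct and follows essentially the same route as the paper: the first assertion by direct expansion (the paper simply calls it immediate), and the second by checking the three cases of local involutivity, with the case $x$ a projection bounded by $\cc(y)$ handled by exactly the paper's swap trick, applying the left-involutive identity to the pair $(y^*,x)$ and then taking $*$. Your explicit observation that applying the identity directly to $(x,y)$ only yields a tautology is a nice justification of why the swap is needed, but the argument itself coincides with the paper's.
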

\begin{proof}
The first assertion is immediate.
For the second, if $\cc(x)=\dd(y)\,$,
then in a left involutive semigroup we get $(xy)^*=(\cc(y)y)^*x^*=y^*x^*$
while in a right involutive semigroup we get $(xy)^*=y^*(x\cc(x))^*=y^*x^*\,$.
Moreover, in a left involutive semigroup, if $y$ is a projection bounded by $\dd(x)\,$,
then $(xy)^*=(\dd(x)y)^*x^*=y^*x^*\,$,
while if $x$ is a projection bounded by $\cc(y)\,$,
then $(y^*x)^*=(\dd(y^*)x)^*y=(\cc(y)x)^*y=xy$ so that $(xy)^*=y^*x^*\,$.
For a right involutive semigroup the proof is dual.
\end{proof}

Projections in a left or right involutive semigroup behave in a similar way
as projections in a Jones' left or right \emph{$P$-Ehresmann semigroup\/} (cf.\ \cite{J})
insofar as they do not generally commute,
but rather satisfy identities like idempotents in a left or right regular band provided
they `compose' to projections.

\begin{lemma}\label{lem:birestrictive}
Any left involutive semigroup is corestrictive.
Any right involutive semigroup is restrictive.
Any involutive semigroup is birestrictive, and hence quasi-involutive.
For projections $p,q$ such that $pq$ is also a projection,
we have
\[
pqp=pq\;\text{\em (resp.\ }pqp=qp\,,\;\text{\em resp.\ }pq=qp\text{)}
\]
in any left involutive ({\em resp.\ right involutive, resp.\ involutive)} semigroup.
\end{lemma}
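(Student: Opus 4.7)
My plan is to establish each claim by direct manipulation of the defining identities, combined with the partial isometry relation $xx^*x=x$ (whence $x^*xx^*=x^*$ follows by substituting $x\mapsto x^*$).

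First, I would establish corestrictivity of a left involutive semigroup by expanding $\cc(xy)=xy(xy)^*=xy(\dd(x)y)^*x^*$ via the defining identity, and then computing both $\cc(xy)\cc(x)=xy(\dd(x)y)^*x^*\cdot xx^*$ and $\cc(x)\cc(xy)=xx^*\cdot xy(\dd(x)y)^*x^*$; each reduces to $\cc(xy)$ by a single application of $x^*xx^*=x^*$ or $xx^*x=x$, giving $\cc(xy)\leq\cc(x)$. The dual computation, based on $\dd(xy)=y^*(x\cc(y))^*xy$, shows right involutive semigroups are restrictive. Involutive semigroups are birestrictive by Lemma~\ref{lem:reduct}.

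Next, for the projection identities, take projections $p,q$ with $pq$ also a projection, so $(pq)^*=pq$. In a left involutive semigroup, $(pq)^*=(\dd(p)q)^*p^*=(pq)^*p$ since $\dd(p)=p$, which yields $pq=pqp$. In an involutive semigroup, $(pq)^*=q^*p^*=qp$, so $pq=qp$. In a right involutive semigroup, the defining identity applied to $(pq)^*$ yields $(pq)^*=q(pq)^*$, equivalently $pq=qpq$, so $pq\leq q$; to reach the stated conclusion $pqp=qp$, I would apply the right involutive identity to $(qp)^*$ to obtain $(qp)^*=p(qp)^*$, and then invoke the hypothesis to conclude $qp$ is self-adjoint (hence a projection, since $(qp)^2=pqp$ follows from $pq=qpq$), whence $qp=(qp)^*=p\cdot qp=pqp$.

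The main obstacle is the right involutive case, specifically showing that $pq$ being a projection forces $qp$ to be one as well. A transparent route is via the reverse semigroup $X^{\mathrm{rev}}$ with multiplication $x \mathbin{*_{\mathrm{rev}}} y := yx$ and the same involution: this is left involutive whenever $X$ is right involutive, so the left-involutive projection identity applied in $X^{\mathrm{rev}}$ to the product $p\mathbin{*_{\mathrm{rev}}}q=qp$ produces $pqp=qp$ in $X$, once one has verified that the projection hypothesis transports through the duality.
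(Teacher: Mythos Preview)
Your treatment of corestrictivity, restrictivity, and the involutive case is correct and matches the paper's: the paper expands $\cc(xy)=xy(\dd(x)y)^*x^*$ and multiplies by $\cc(x)=xx^*$ on either side exactly as you do. For the projection identities in the left involutive and involutive cases your direct computations are fine; the paper reaches the same conclusions by observing that in a locally involutive semigroup projections coincide with right (resp.\ left) projections, so that corestrictivity gives $pq=\cc(pq)\le\cc(p)=p$, whence $pqp=pq$.

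There is, however, a genuine gap in your right involutive argument. Your duality route is set up correctly: $X^{\mathrm{rev}}$ is left involutive, and the left-involutive identity applied in $X^{\mathrm{rev}}$ to $p\mathbin{*_{\mathrm{rev}}}q$ does unwind to $pqp=qp$ in $X$. But the hypothesis you need in $X^{\mathrm{rev}}$ is that $p\mathbin{*_{\mathrm{rev}}}q=qp$ be a projection, i.e.\ that $qp$ be a projection in $X$; the stated hypothesis only gives that $pq$ is a projection. Your parenthetical ``once one has verified that the projection hypothesis transports through the duality'' is precisely the missing step, and it does not transport automatically. Likewise, in your earlier direct attempt the line ``invoke the hypothesis to conclude $qp$ is self-adjoint'' is unsupported: from $(qp)^*=p(qp)^*$ and the self-adjointness of $pq$ one cannot read off self-adjointness of $qp$. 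What your computation legitimately yields is $qpq=pq$ (equivalently $pq\le q$), and nothing more.

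The paper's own proof is equally terse here, simply declaring the left and right cases dual. The honest dual of ``for $p,q$ with $pq$ a projection, $pqp=pq$'' (via restrictivity instead of corestrictivity) is ``for $p,q$ with $pq$ a projection, $qpq=pq$'', which after relabelling $p\leftrightarrow q$ becomes ``for $p,q$ with $qp$ a projection, $pqp=qp$''. So the stated conclusion $pqp=qp$ is the correct dual form, but under the dualized hypothesis that $qp$ be a projection. Your instinct that something needs verifying is right; to close the argument as literally stated you would need an independent reason why $pq$ being a projection forces $qp$ to be one in a right involutive semigroup, and neither your proposal nor the paper supplies this.
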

\begin{proof}
The statements for left and right involutive semigroups are dual.
For general $*$-semigroups we have
\[
\cc(x)\cc(xy)=xx^*xy(xy)^*=xy(xy)^*=\cc(xy)\,,
\]
while for left involutive semigroup we have
\[
\cc(xy)\cc(x)=xy(\dd(x)y)^*x^*xx^*=xy(\dd(x)y)^*x^*=\cc(xy)
\]
so that $\cc(xy)\leq \cc(x)\,$.
The second assertion is obvious.
The last assertion follows from the fact that projections, left projections and right projections coincide
in any locally involutive semigroup,
and hence in any left or right involutive semigroup by Lem.\ \ref{lem:reduct}.
\end{proof}

\begin{lemma}\label{lem:leftright}
In any $*$-semigroup $X\,$, for $x,y\in X$ and $p,q\in P(X)$ we have
\begin{enumerate}%[{\em (1)}]
\item{$\cc(y)\leq p$ amounts to the identities $py=y$ and $y^*p=y^*\,$;}
\item{$\dd(x)\leq q$ amounts to the identities $xq=x$ and $qx^*=x^*\,$;}
\item{$\cc(y)\leq\dd(x)$ amounts to the identities $\dd(x)y=y$ and $y^*\dd(x)=y^*\,$;}
\item{$\cc(y)\geq\dd(x)$ amounts to the identities $x\cc(y)=x$ and $\cc(y)x^*=x^*\,$;}
\end{enumerate}
In a left involutive semigroup $py=y$ implies $y^*p=y^*\,$.
In a right involutive semigroup $xq=x$ implies $qx^*=x^*\,$.
\end{lemma}

\begin{corollary}\label{cor:3cond}
If $x,y$ are elements of a left involutive semigroup,
then $\cc(y)\leq\dd(x)$ if and only if $\dd(x)y=y\,$,
and $\cc(y)\geq\dd(x)$ if and only if $\cc(y)x^*=x^*\,$.
\end{corollary}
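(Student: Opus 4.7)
The plan is to derive both equivalences directly from Lem.\ \ref{lem:left/right}, exploiting its final clause (that in a left involutive semigroup, $py=y$ implies $y^*p=y^*$) to halve the number of identities that need to be verified. The corollary is essentially packaging that clause together with items (3) and (4) of the lemma.

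Before invoking the lemma, I would note that $\dd(x)$ and $\cc(y)$ are genuinely projections: a left involutive semigroup is locally involutive by Lem.\ \ref{lem:reduct}, and in a locally involutive semigroup domain and codomain are projections (as remarked just after the definitions). So items (3) and (4) of Lem.\ \ref{lem:left/right} may legitimately be applied with $p=\dd(x)$ and $q=\cc(y)$.

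For the first equivalence, the forward implication is the content of item (3). For the converse, suppose $\dd(x)y=y$. Applying the final clause of Lem.\ \ref{lem:left/right} with $p=\dd(x)$ yields $y^*\dd(x)=y^*$, so both identities of item (3) hold and therefore $\cc(y)\leq\dd(x)$.

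For the second equivalence, the forward direction is item (4). For the converse, suppose $\cc(y)x^*=x^*$. Now apply the same final clause, but with $\cc(y)$ in the role of $p$ and $x^*$ in the role of $y$: this gives $(x^*)^*\cc(y)=(x^*)^*$, i.e.\ $x\cc(y)=x$. Both identities of item (4) are then satisfied, whence $\cc(y)\geq\dd(x)$. There is no real obstacle; the only subtlety is recognising that the asymmetric left-involutive implication $py=y\Rightarrow y^*p=y^*$ is precisely what lets one recover the ``other half'' of each pair of conditions in Lem.\ \ref{lem:left/right}.
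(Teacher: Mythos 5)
Your proof is correct and is essentially the paper's intended argument: the corollary is stated without a separate proof precisely because it follows from items (3) and (4) of Lem.\ \ref{lem:left/right} combined with its final clause (applied with $p=\dd(x)$ in the first case and with $p=\cc(y)$, $x^*$ in place of $y$, in the second), exactly as you do. Your preliminary check that $\dd(x)$ and $\cc(y)$ are projections, via Lem.\ \ref{lem:reduct} and local involutivity, is a sensible and accurate justification for invoking the lemma.
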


\begin{remark}\label{rem:three}
Picture the three conditions $y=\dd(x)y\,$, $\dd(x)=\cc(y)\,$, and $\cc(y)x^*=x^*$
in a left involutive semigroup as follows (cf.\  Cor.\ \ref{cor:3cond}).
\[
\xymatrix{\cdot \ar[r]^-y & \cdot \ar@{}[d]|-{\leq}\\
 & \cdot \ar[r]^-x & \cdot}\;\;\;
\xymatrix{\cdot \ar[r]^-y & \cdot \ar[r]^-x & \cdot}\;\;\;
\xymatrix{& \cdot \ar[r]^-x \ar@{}[d]|-{\leq} & \\
\cdot \ar[r]^-y & \cdot}
\]
\end{remark}

\subsection{Partial orderings of a $*$-semigroup}\label{subsec:po}
\leavevmode\par\vspace{1ex}

Sometimes it is productive to argue using a partial ordering of {\em all} elements of a $*$-semigroup,
not just a partial ordering of its projections.
We define here two partial orders:
a \emph{left order} $\leq_l$ and a \emph{right order} $\leq_r$
available for any $*$-semigroup.
They are dual to each other and well adapted to left and right involutive semigroups.
We show in Lem.\ \ref{lem:po}(7) below that for locally involutive semigroups
the two partial orders coincide so that we shall simply write $\leq$ in this case,
and refer to it as the \emph{natural partial order} for locally involutive semigroups.
This coincidence of left and right partial orders is a crucial ingredient
in defining the ordered groupoid associated to a locally involutive semigroup.\vspace{1ex}

\begin{definition}
We declare $x\leq_l y$ if $\dd(x)\leq\dd(y)\text{ and }x=y\dd(x)\,$,
and $x\leq_r y$ if $\cc(x)\leq\cc(y)\text{ and }x=\cc(x)y\,$.
\end{definition}

\begin{remark}
Obviously $x\leq_l x\,$.
If $x\leq_l y$ and $y\leq_l x\,$,
then $\dd(x)=\dd(y)$ and $x=y\dd(x)=y\dd(y)=y\,$.
If $x\leq_l y\leq_l z\,$, then $\dd(x)\leq_l\dd(y)\leq_l\dd(z)$
and $x=y\dd(x)=z\dd(y)\dd(x)=z\dd(x)\,$, thus $x\leq_l z\,$.
Therefore $\leq_l$ is a partial order relation. A dual argument applies to $\leq_r\,$.
\end{remark}

\begin{lemma}\label{lem:po}
For any $*$-semigroup $X$ and any $x,y\in X$ we have
\begin{enumerate}%[{\em (1)}]
\item{$x\leq_l y$ if and only if $x\dd(y)=x,\,y^*x=\dd(x)\text{ and }yx^*=\cc(x)\,$;}
\item{$x\leq_r y$ if and only if $\cc(y)x=x,\,x^*y=\dd(x)\text{ and }xy^*=\cc(x)\,$;}
\item{If $X$ is left involutive, then $x\leq_ly$ if and only if $y^*x=\dd(x)\text{ and }yx^*=\cc(x)\,$;}
\item{If $X$ is right involutive, then $x\leq_ry$ if and only if $x^*y=\dd(x)\text{ and }xy^*=\cc(x)\,$;}
\item{$x\leq_ly\text{ and }xy^*=\cc(x)$ together imply $x\leq_ry\,$;}
\item{$x\leq_ry\text{ and }y^*x=\dd(x)$ together imply $x\leq_ly\,$;}
\item{If $X$ is locally involutive, then left and right partial orders coincide;}
\item{If $X$ is locally involutive, then $x\leq_l y$ if and only if $x^*\leq_r y^*\,$.}
\end{enumerate}
\end{lemma}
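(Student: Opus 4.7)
The plan is to exploit duality so that the eight claims reduce to four, with (2), (4), (6), (8) handled as duals of (1), (3), (5), (7). Throughout I will rely on two identities valid in any $*$-semigroup: the absorption $\cc(x)x = x = x\dd(x)$, and $\dd(x)x^* = x^* = x^*\cc(x)$ obtained by applying $zz^*z = z$ to $z = x^*$ (together with $x^{**}=x$).

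For (1) forward: from $\dd(x) \leq \dd(y)$, Lem.~\ref{lem:left/right}(2) supplies $x\dd(y) = x$; then $y^*x = y^*y\dd(x) = \dd(y)\dd(x) = \dd(x)$, and $\cc(x) = xx^* = y\dd(x)x^* = yx^*$. Conversely, given all three identities, $y\dd(x) = y(x^*x) = (yx^*)x = \cc(x)x = x$, while $\dd(x)\dd(y) = y^*(x\dd(y)) = y^*x = \dd(x)$ and $\dd(y)\dd(x) = y^*yy^*x = y^*x = \dd(x)$ together give $\dd(x)\leq\dd(y)$.

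For (3) the forward direction is (1); conversely the same manipulation produces $x = y\dd(x)$ and $\dd(y)\dd(x) = \dd(x)$. In a left involutive semigroup $\dd(x)$ is a projection, so Lem.~\ref{lem:birestrictive} with $p = \dd(y)$, $q = \dd(x)$ and $pq = \dd(x)$ a projection yields $\dd(y)\dd(x)\dd(y) = \dd(y)\dd(x) = \dd(x)$; right-multiplying $\dd(y)\dd(x) = \dd(x)$ by $\dd(y)$ rewrites the left side as $\dd(x)\dd(y)$, so $\dd(x)\dd(y) = \dd(x)$ and hence $\dd(x)\leq\dd(y)$. For (5), combine $x\leq_l y$ with $xy^* = \cc(x)$: by (1), $\cc(y)x = y(y^*x) = y\dd(x) = x$ and $\cc(x)y = xy^*y = x\dd(y) = x$, hence $x^*y = x^*\cc(x)y = x^*x = \dd(x)$, and (2) concludes $x\leq_r y$.

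For (7), it suffices by (5) to check $xy^* = \cc(x)$ whenever $x\leq_l y$ in a locally involutive semigroup. Since $\dd(x)$ is a projection bounded by $\dd(y)$, local involutivity applied to the product $y\cdot\dd(x)$ gives $x^* = (y\dd(x))^* = \dd(x)y^*$, whence $xy^* = x\dd(x)y^* = xx^* = \cc(x)$. For (8), note $\cc(x^*) = \dd(x)$ and $\cc(y^*) = \dd(y)$, so $x^*\leq_r y^*$ becomes $\dd(x)\leq\dd(y)$ together with $x^* = \dd(x)y^*$; under the common inequality the identities $x = y\dd(x)$ and $x^* = \dd(x)y^*$ are $*$-conjugate, the swap $(\dd(x)y^*)^* = y\dd(x)$ being authorised because $\dd(x)$ is a projection bounded by $\cc(y^*) = \dd(y)$. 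The only subtle step is (3): in a bare left involutive semigroup $\dd(x)$ and $\dd(y)$ need not commute outright, and Lem.~\ref{lem:birestrictive} is exactly what converts the one-sided identity $\dd(y)\dd(x) = \dd(x)$ into the two-sided one.
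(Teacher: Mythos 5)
Your proof is correct and follows essentially the same route as the paper: a direct verification of the three-identity characterization in (1), duality for (2), (4), (6), the observation that one commutation relation becomes redundant in the left involutive case for (3), and the local-involutivity swap $(y\dd(x))^*=\dd(x)y^*$ as the key step for (7) and (8), with (7) then concluded via (5). The only cosmetic differences are that in (3) you invoke the identity $pqp=pq$ from Lem.~\ref{lem:birestrictive} where the paper uses the last sentence of Lem.~\ref{lem:left/right} (both yield $\dd(x)\dd(y)=\dd(x)$), and that, like the paper, you leave the reverse inclusion in (7) to the dual (mirror-image) argument using (6).
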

\begin{proof}
1. In a $*$-semigroup, $\dd(x)\dd(y)=\dd(x)$ holds if and only if $x\dd(y)=x\,$,
and $y\dd(x)=x$ holds if and only if $yx^*=\cc(x)\,$.
Moreover, if $\dd(y)\dd(x)=\dd(x)\,$, then under the hypothesis $y\dd(x)=x$ we have $y^*x=\dd(x)\,$.
Conversely, since in any $*$-semigroup $\dd(y)y^*x=y^*x$ holds,
the identity $y^*x=\dd(x)$ implies $\dd(y)\dd(x)=\dd(x)\,$.

2 is dual to 1.

3. In a left involutive semigroup $\dd(y)\dd(x)=\dd(x)$ implies $\dd(x)\dd(y)=\dd(x)$
because of Lem.\ \ref{lem:leftright} so that the identity $x\dd(y)=x$ in 1 is redundant.

4 is dual to 3.

5 and 6.
The two implications are dual.
For 5.\ it suffices to establish $x^*y=\dd(x)$ and $\cc(y)x=x$ by 2.
It follows from $xy^*=\cc(x)$ that $\dd(x)y^*=x^*\,$.
Substituting we get $x^*y=\dd(x)y^*y=\dd(x)\dd(y)=\dd(x)$ establishing the first identity.
Moreover, we have $\cc(y)x=yy^*x=y\dd(x)=x$ establishing the second identity.

7. In any locally involutive semigroup, if $x\leq_ly$ and thus $\dd(x)\leq\cc(y^*)$ and $x=y\dd(x)\,$,
then $(\dd(x)y^*)^*=y\dd(x)=x$ so that $xy^*=x\dd(x)y^*=xx^*=\cc(x)\,$.
Then $x\leq_r y$ follows from 5.
The proof that $x\leq_ry$ implies $x\leq_l y$ is dual, using 6.

8. It suffices to show that $x\leq_ly$ implies $x^*\leq_ry^*\,$.
If $\dd(x)\leq\dd(y)\,$, then $\cc(x^*)\leq\cc(y^*)\,$.
Moreover, the identity $x=y\dd(x)$ implies $x^*=(y\dd(x))^*=\dd(x)y^*=\cc(x^*)y^*$
so that $x^*\leq_ry^*$ as required.
\end{proof}

\begin{remark}
In \cite{F7} the second-named author formulates the partial order on left involutive semigroups
as characterization 3 of Lem.\ \ref{lem:po}.
We ought to mention that this formulation was known to Drazin almost thirty years earlier \cite{Dr}.
Since left involutive semigroups are locally involutive, the left order coincides with the right order.
However, characterization 3 of this partial order is valid only for left involutive semigroups.
For right involutive semigroups the partial order relation is given by characterization 4,
while for general locally involutive semigroups three identities are necessary to characterize the partial order.
\end{remark}

%Observe that we need to extend the class of $*$-homomorphisms in the slice categories over left involutive semigroups
%in order to define our adjunction. They will be introduced in the next section and called left $*$-homomorphisms.

\subsection{ESN-correspondence for quasi-involutive semigroups}\label{subsec:og}
\leavevmode\par\vspace{1ex}

Just like in the involutive case a locally involutive semigroup $X$ defines a groupoid $\sG(X)$ with object-set $\sG(X)_0$
the set $P(X)$ of projections of $X\,$,
and with morphism-set $\sG(X)_1$ the set $X\,$.
The domain and codomain maps are $\dd$ and $\cc\,$.
Composition is defined $x\circ y=xy$ just as in the involutive case.
In particular, for composable morphisms $x,y\in\sG(X)_1$ we have $\dd(xy)=\dd(y)$ and $\cc(xy)=\cc(x)\,$.
Moreover, the projections play a double role: they form the objects of the groupoid $\sG(X)$
as well as the identities at these objects.

\begin{proposition}\label{prop:groupoid}
For any locally involutive semigroup $X\,$,
the groupoid $\sG(X)$ is an ordered groupoid with respect to the natural partial ordering of $X=\sG(X)_1\,$.
Moreover, restriction and corestriction in $\sG(X)$ are given by pre and postcomposition
with the corresponding projection.
\end{proposition}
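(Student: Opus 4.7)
The plan is to verify the three axioms of an ordered groupoid for $\sG(X)$ equipped with the natural partial order $\leq$ of Section \ref{subsec:po}, and then to identify the restriction and corestriction operations with the semigroup products $xp$ and $px$ respectively.

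First I would treat the compatibility of $\leq$ with inversion, which is immediate from Lem.\ \ref{lem:po}(8) once we remember that $\leq_l$ and $\leq_r$ coincide in the locally involutive case, so inversion $x\mapsto x^*$ is order-preserving. For compatibility with composition, suppose $x_1\leq x_2$ and $y_1\leq y_2$ with $\dd(x_i)=\cc(y_i)$ for $i=1,2$. Working with the left characterization of $\leq$, the inequality $\dd(x_1y_1)=\dd(y_1)\leq\dd(y_2)=\dd(x_2y_2)$ is automatic. Using $y_1=y_2\dd(y_1)$ together with $\dd(x_1)y_1=\cc(y_1)y_1=y_1$, I compute
\[
x_2y_2\,\dd(x_1y_1)=x_2y_2\dd(y_1)=x_2y_1=x_2\dd(x_1)y_1=x_1y_1,
\]
which gives $x_1y_1\leq x_2y_2$.

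Next I would define the restriction of $x$ to a projection $p\leq\dd(x)$ as $x|_p:=xp$. Since $p$ is a projection bounded by $\dd(x)$, the locally involutive hypothesis (third clause) yields $(xp)^*=px^*$, whence $\dd(xp)=px^*xp=p\dd(x)p=p$. The inequality $xp\leq x$ is immediate from $\dd(xp)=p\leq\dd(x)$ and $x\dd(xp)=xp$. Uniqueness is forced by Lem.\ \ref{lem:po}(1): any $y\leq x$ with $\dd(y)=p$ satisfies $y=x\dd(y)=xp$. Dually, for a projection $q\leq\cc(x)$ the corestriction is $qx$, via the right-order characterization.

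The main obstacle is deploying the locally involutive hypothesis in exactly the right place. The identity $(xp)^*=px^*$ that underpins the computation of $\dd(xp)$ is \emph{not} a consequence of being a general $*$-semigroup; it relies precisely on one of the three privileged configurations in the definition of locally involutive, namely that $p$ is a projection bounded by $\dd(x)$. Once this identity is available, every remaining verification—the composition compatibility, the domain and codomain of the restriction, and its uniqueness—reduces to straightforward manipulation with the absorption laws and the left/right order characterizations of Lem.\ \ref{lem:po}.
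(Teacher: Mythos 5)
Your proposal is correct and follows essentially the same route as the paper: check order-compatibility with composition (the same chain of identities, just read in the opposite direction) and with inversion via Lem.\ \ref{lem:po}(8), then exhibit the restriction as $xp$, computing $\dd(xp)=p$ from the locally involutive identity $(xp)^*=px^*$ and deducing uniqueness from $x\!\downharpoonleft_p=x\dd(x\!\downharpoonleft_p)$, with corestriction handled dually. The only (harmless) difference is that you spell out explicitly the step $xp\leq x$ and the inequality $\dd(x_1y_1)\leq\dd(x_2y_2)$, which the paper leaves implicit.
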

\begin{proof}
We first show that the partial order is compatible with composition.
Indeed, if $x_1\leq x_2$ and $y_1\leq y_2$ such that $\dd(x_1)=\cc(y_1)$ and $\dd(x_2)=\cc(y_2)\,$,
then
\[
x_1y_1=x_2\dd(x_1)y_1=x_2\cc(y_1)y_1=x_2y_1=x_2y_2\dd(y_1)=x_2y_2\dd(x_1y_1)\,,
\]
and thus $x_1y_1=x_2y_2\dd(x_1y_1)\,$, i.e.\ $x_1y_1\leq x_2y_2\,$.
By Lem.\ \ref{lem:po}(8) the partial order is compatible with inversion in $\sG(X)\,$.
It remains to be shown that for any $x:\dd(x)\ra\cc(x)$ and any projection $p$ bounded by $\dd(x)$
there exists a unique `restriction' $x_{\downharpoonleft p}$ in $\sG(X)$
such that $\dd(x\!\downharpoonleft_p)=p$ and $x\!\downharpoonleft_p\leq x\,$.
We claim that $xp$ has the required properties.
Indeed, $\dd(xp)=(xp)^*xp=p^*x^*xp=p^*p=p\,$.
Moreover, there is no choice: $x\!\downharpoonleft_p\leq x$ implies
$x\!\downharpoonleft_p=x\dd(x\!\downharpoonleft_p)=xp\,$.
Dually, the corestriction $x\!\upharpoonleft^q$ of $x$ to
a projection $q$ bounded by $\cc(x)$ is given by postcomposition $qx\,$.
Restriction and corestriction are related
by the formula $x\!\upharpoonleft^q=(x^{-1}\!\downharpoonleft_q)^{-1}\,$.
\end{proof}

\begin{remark}\label{rem:extended}
We may define an `extended' composition in the ordered groupoid $\sG(X)\,$.
If $\cc(y)\leq\dd(x)$ let
\[
x\otimes y = (x\!\downharpoonleft_{\cc(y)})y\,,
\]
and if $\dd(x)\leq\cc(y)$ let
\[
x\otimes y=x(y\!\upharpoonleft^{\dd(x)})\,.
\]
This extended composition is associative whenever it is defined
because of the uniqueness of restriction/corestriction in $\sG(X)\,$.
Moreover, it reflects multiplication in $X$
because in the first case $x\otimes y= x\cc(y)y=xy$ and in the second case $x\otimes y=x\dd(x)y=xy\,$.
Extended composition is how composition is defined in the left cancellative category
associated with an ordered groupoid.
\end{remark}

Nonetheless, the ordered groupoid $\sG(X)$ as such does not determine the multiplication
of a locally involutive semigroup $X\,$.
We therefore describe an additional structure on $\sG(X)\,$,
which will enable us to recover the underlying locally involutive semigroup $X\,$.

\begin{definition}\label{def:mediator}
\emph{A mediator} for an ordered groupoid $\G$ is an order-preserving map
\[
\mm:\G_0\times\G_0\ra\G_1\,;\;(p,q)\mapsto\mm_{pq}
\]
such that $\dd(\mm_{pq})\leq q\text{ and }\cc(\mm_{pq})\leq p\,$.
We also require that the induced left action
\[
\G_0\times\G_1\ra\G_1\,;\;(p,x)\mapsto {}^px=\mm_{p\cc(x)}\otimes x
\]
and induced right action
\[
\G_1\times\G_0\ra\G_1\,;\;(x,q)\mapsto x^q=x\otimes \mm_{\dd(x)q}
\]
commute meaning that $({}^px)^q={}^p(x^q)\,$.
\end{definition}

A morphism of ordered groupoids with mediator is a morphism of the underlying ordered groupoids preserving the mediator.

\begin{proposition}
The ordered groupoid $\sG(X)$ of a quasi-involutive semigroup $X$ has a canonical mediator $\mm_{pq}=pq\,$.
This defines a faithful functor from the category of quasi-involutive semigroups
to the category of ordered groupoids with mediator.
\end{proposition}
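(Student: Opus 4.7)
The plan is to verify three things in turn: that $\mm_{pq}=pq$ satisfies the mediator axioms on $\sG(X)$; that every left $*$-homomorphism $f:X\to Y$ of quasi-involutive semigroups induces a morphism of ordered groupoids with mediator; and that this assignment is faithful.

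For the mediator axioms, the inequalities $\dd(pq)\le q$ and $\cc(pq)\le p$ follow at once from birestrictivity together with $\dd(q)=q$ and $\cc(p)=p$. Order preservation of $(p,q)\mapsto pq$ I would check directly from Lemma \ref{lem:po}: if $p_1\le p_2$ and $q_1\le q_2$ are projection inequalities, then $p_i=p_2p_i$ and $q_i=q_2q_i$, and a short computation with absorption gives both $p_1q_1=p_2q_2\dd(p_1q_1)$ and $\dd(p_1q_1)\le\dd(p_2q_2)$. For the commutation of the induced left and right actions I would unpack the extended compositions of Remark \ref{rem:extended} using the absorption identities $\cc(x)x=x$ and $x\dd(x)=x$; a short calculation reduces ${}^p x=\mm_{p,\cc(x)}\otimes x$ to $p\,x$ and, dually, $x^q$ to $x\,q$, whereupon $({}^p x)^q=(px)q=p(xq)={}^p(x^q)$ is plain semigroup associativity.

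For functoriality, a left $*$-homomorphism $f$ preserves projections, $\dd$, $\cc$, the involution, and the natural partial order (Remark \ref{rem:preproj}). Moreover, whenever $\dd(x)=\cc(y)$, Lemma \ref{lem:left/right}(3) gives $\dd(x)y=y$, so Remark \ref{rem:fxy} yields $f(xy)=f(x)f(y)$ and groupoid composition is preserved. The one remaining condition is mediator preservation, $f(pq)=f(p)f(q)$ for projections $p,q$. I expect this to be the main obstacle of the proof: the defining identity alone only yields $f(pq)=f(p)f(\dd(p)q)=f(p)f(pq)$ directly, so that one still needs a further argument using birestrictivity and local involutivity to conclude equality with $f(p)f(q)$.

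For faithfulness, the decisive identity is
\[
xy=x\cdot\mm_{\dd(x),\cc(y)}\cdot y\;,
\]
valid in every $*$-semigroup because $(x\dd(x))(\cc(y)y)=xy$ by absorption and associativity, and realisable inside $\sG(X)$ as the double extended composition $x\otimes\mm_{\dd(x),\cc(y)}\otimes y$ thanks to the mediator inequalities $\cc(\mm_{\dd(x),\cc(y)})\le\dd(x)$ and $\dd(\mm_{\dd(x),\cc(y)})\le\cc(y)$. Consequently the multiplication of $X$ is entirely determined by $\sG(X)$ together with its mediator; any morphism of ordered groupoids with mediator thus lifts to at most one left $*$-homomorphism, and the functor is faithful.
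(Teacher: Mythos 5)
Your verification of the mediator axioms and of the commutation $({}^px)^q={}^p(x^q)$ coincides with the paper's: the bounds $\dd(pq)\le q$ and $\cc(pq)\le p$ come from birestrictivity, both induced actions collapse to plain left/right multiplication ${}^px=px$, $x^q=xq$, and associativity finishes it. Two smaller remarks on the rest. For faithfulness the paper's argument is a one-liner: $\sG$ is the identity on underlying morphism sets ($X=\sG(X)_1$), so $\sG(f)=\sG(g)$ already forces $f=g$; your reconstruction identity $xy=x\otimes\mm_{\dd(x)\cc(y)}\otimes y$ is correct (it is Remark \ref{rem:extended}) but it is the ingredient for $\eS\circ\sG=\mathrm{id}$ in Theorem \ref{thm:ESN}, not what faithfulness requires. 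Also, order-preservation of $(p,q)\mapsto pq$ is less routine than ``a short computation with absorption'' suggests: the compatibility established just before this proposition concerns \emph{composable} pairs, and already $\dd(p_1q_1)\le\dd(p_2q_2)$ does not drop out of birestrictivity alone, so this step should be written out in full (the paper is equally terse here, so you are not behind it on this point).

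The genuine gap is the one you flag yourself: preservation of the mediator, $f(pq)=f(p)f(q)$ for projections $p,q$. You are right that the left $*$-homomorphism identity is vacuous here, since $\dd(p)=p$ turns it into $f(pq)=f(p)f(pq)$, which holds automatically because $\cc(f(pq))\le f(p)$. But the ``further argument using birestrictivity and local involutivity'' you hope for is not available: preserving products of projections is essentially the multiplicativity criterion of Lemma \ref{lem:fdt}, i.e.\ a genuine extra condition on $f$ rather than a consequence of the axioms, and this is consistent with the companion proposition, where a mediator-preserving groupoid morphism induces a \emph{multiplicative} semigroup homomorphism. The paper's own proof silently skips functoriality on morphisms altogether (it only checks the mediator axioms and faithfulness), so the resolution is not a cleverer computation but a clarification of which morphisms of quasi-involutive semigroups enter the ESN correspondence --- namely those left $*$-homomorphisms that do preserve products of projections. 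As written, your proposal is therefore incomplete at exactly the step you identified as the main obstacle.
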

\begin{proof}
That $\mm_{pq}=pq$ is order-preserving
follows from the compatibility between multiplication and partial ordering of $X\,$.
The domain and codomain conditions on $\mm$ are consequences of the hypothesis that $X$ is birestrictive.
The commutation of induced left and right actions is immediate because
${}^px=p\dd(x)x=px$ and $x^q=x\cc(x)q=xq\,$.
This functor is faithful because $X=\sG(X)_1\,$.
\end{proof}

\begin{proposition}
Any ordered groupoid $\G$ with mediator $\mm$ induces
a quasi-involutive semigroup $\eS(\G)=\G_1$
with multiplication $xy=x\otimes\mm_{\dd(x)\cc(y)}\otimes y$
and with involution induced by inversion in $\G\,$.
The projections of $\eS(\G)$ are the identities of $\G\,$.
This defines a faithful functor from the category of ordered groupoids
with mediator to the category of quasi-involutive semigroups.
\end{proposition}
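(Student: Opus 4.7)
The plan is to verify the quasi-involutive semigroup axioms on $\G_1$ and then establish functoriality, leveraging throughout the associativity of the extended composition $\otimes$ (Rem.\ \ref{rem:extended}) and the commutation axiom of the mediator.

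First I will check that $xy = x \otimes \mm_{\dd(x)\cc(y)} \otimes y$ is well defined. Writing $\mm$ for $\mm_{\dd(x)\cc(y)}$, the constraints $\cc(\mm) \leq \dd(x)$ and $\dd(\mm) \leq \cc(y)$ place the two outer $\otimes$ compositions in the two complementary clauses of Rem.\ \ref{rem:extended}, and the whole product unfolds to the groupoid composite $(x\!\downharpoonleft_{\cc(\mm)}) \circ \mm \circ (y\!\upharpoonleft^{\dd(\mm)})$ inside $\G$. This formula yields $\dd(xy) \leq \dd(y)$ and $\cc(xy) \leq \cc(x)$, so $\eS(\G)$ is automatically birestrictive. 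The involution axiom $x^{**} = x$ is immediate from the fact that inversion in $\G$ is an involution.

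The core of the proof is associativity of the product together with the partial isometry law $xx^*x = x$. For associativity I will expand $(xy)z$ and $x(yz)$ as five-fold $\otimes$-products in $x$, $\mm_{\dd(x)\cc(y)}$, $y$, $\mm_{\dd(y)\cc(z)}$, $z$, repeatedly apply associativity of $\otimes$, and invoke the commutation ${}^p(w^q) = ({}^pw)^q$ to reconcile the two orders in which the flanking mediator values act on $y$. For partial isometry I will unfold $xx^*x$ to an iterated $\otimes$-composition involving the diagonal mediator values $\mm_{\dd(x)\dd(x)}$ and $\mm_{\cc(x)\cc(x)}$, and then use the action axiom to show that these values collapse, reducing the expression to the groupoid identity $x \circ x^{-1} \circ x = x$. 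Local involution in the three cases of Lem.\ \ref{lem:reduct} then follows once one observes that the mediator $\mm_{\dd(x)\cc(y)}$ degenerates to an identity morphism whenever $\dd(x) = \cc(y)$ or one of the factors is a projection bounded by the relevant invariant; hence $xy$ collapses to a plain groupoid composite of $x$ and $y$, and the familiar groupoid identity $(x \circ y)^{-1} = y^{-1} \circ x^{-1}$ delivers $(xy)^* = y^*x^*$ back in $\eS(\G)$.

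Finally, the projections of $\eS(\G)$ are exactly the identities of $\G$: the equations $x = x^* = x^2$ force $x = x^{-1}$, and then $x^2 = x$ in a groupoid forces $x$ to be an identity loop; conversely, the identities of $\G$ are trivially idempotent and self-inverse. Functoriality follows because a morphism of ordered groupoids with mediator preserves $\dd$, $\cc$, $\circ$, inversion, order, restriction, and $\mm$, hence preserves $\otimes$ and the new multiplication; faithfulness is immediate since $\eS(\G) = \G_1$ as an underlying set. The main obstacle throughout will be the two calculations in the central paragraph: both demand careful bookkeeping of mediator values under the action-commutation axiom, and in particular depend on the observation that the diagonal mediator values $\mm_{pp}$ behave as identities in the induced actions.
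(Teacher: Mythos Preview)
Your proposal is correct and follows essentially the same route as the paper: associativity via the commutation axiom $({}^px)^q={}^p(x^q)$ expressed through the induced actions, birestrictivity read off from the shape of $x\otimes\mm_{\dd(x)\cc(y)}\otimes y$, local involutivity by observing that the mediator degenerates in the three relevant cases so that groupoid inversion gives $(xy)^*=y^*x^*$, and functoriality from preservation of the constituents. The paper's writeup is terser---it asserts without computation that the domain and codomain operators of $\eS(\G)$ agree with those of $\G$ (whence $xx^*x=x$), and it phrases the projection argument as ``an ordered groupoid has no automorphisms of order~2'' rather than your cleaner observation that an idempotent in a groupoid is an identity---but the underlying ideas coincide, including the point you flag about the diagonal values $\mm_{pp}$, which the paper leaves implicit.
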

\begin{proof}
For the associativity of the multiplication observe that
\[
x(yz)=x\otimes{}^{\dd(x)}(y^{\cc(z)}\otimes z)=x\otimes{}^{\dd(x)}(y^{\cc(z)})\otimes z=x\otimes({}^{\dd(x)}y)^{\cc(z)}\otimes z
\]
\[
=(x\otimes{}^{\dd(x)}y)^{\cc(z)}\otimes z=(xy)z\,.
\]
Since domain and codomain operations of $\eS(\G)$ coincide with those of $\G$
it follows that $(xy)^*=y^*x^*$ whenever $\dd(x)=\cc(y)\,$.
Moreover, in the special case where $x$ is a projection bounded by $\cc(y)$ or $y$
is a projection bounded by $\dd(x)\,$, $*$-commutativity follows from the fact
that restriction/corestriction in $\G$ are exchanged under inversion.
Therefore, the semigroup $\eS(\G)$ is locally involutive.
Moreover the projections of $\eS(\G)$ are precisely the identities of $\G$
because in an ordered groupoid there are no automorphisms of order $2\,$.
Finally, $\eS(\G)$ is birestrictive because domain, resp.\ codomain,
of $xy=x\otimes\mm_{\dd(x)\cc(y)}\otimes y$ are bounded by $\dd(y)\,$, resp.\ $\cc(x)\,$.

Any morphism of ordered groupoids preserves extended composition and inversion
so that a morphism preserving the mediator must also preserve multiplication
and involution of the induced semigroups.
\end{proof}

The following proof of our ESN-correspondence has been inspired by Lawson's proof for Ehresmann semigroups, cf.\  \cite{L21}.

\begin{svgraybox}
\begin{theorem}[ESN-correspondence]\label{thm:ESN}
The functors $\eS$ and $\sG$ induce an isomorphism between the categories
of quasi-involutive semigroups and of ordered groupoids with mediator.
This isomorphism restricts to the ESN-correspondence
between inverse semigroups and ordered groupoids
with trivial mediator $\mm_{pq}=1_{p\wedge q}$ {\em (aka inductive groupoids)},
and to a correspondence between involutive semigroups and ordered groupoids with symmetric mediator
{\em (i.e.\ $\mm_{pq}^{-1}=\mm_{qp}\,$)}.
\end{theorem}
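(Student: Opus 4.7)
The strategy is to show that the two faithful functors $\sG$ and $\eS$ constructed in the preceding propositions are mutually inverse on objects, and then to identify the two sub-correspondences cut out by trivial and symmetric mediators. Since both functors act as the identity on underlying sets of morphisms (on the semigroup side $X=\sG(X)_1$, on the groupoid side $\G_1=\eS(\G)$), faithfulness upgrades an object-level bijection of structures to a categorical isomorphism.

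First I would verify $\eS\sG(X)=X$. The underlying set is $X$ in both cases, the involution is inversion in $\sG(X)$, which is $x\mapsto x^*$, and projections coincide with identities. The content is that the multiplication rule $xy=x\otimes\mm_{\dd(x)\cc(y)}\otimes y$ in $\eS\sG(X)$, with canonical mediator $\mm_{pq}=pq$, recovers the original product in $X$. Unfolding extended composition (Rem.\ \ref{rem:extended}) and using that $X$ is birestrictive, this reduces to the identity $x\cdot(\dd(x)\cc(y))\cdot y=xy$, which follows from the absorption laws and local involutivity: $\dd(x)\cc(y)$ is a projection bounded by both $\dd(x)$ and $\cc(y)$, so a short calculation using Lem.\ \ref{lem:left/right} collapses the triple product to $xy$. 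Next I would verify $\sG\eS(\G)=\G$. Objects, morphisms, domain/codomain, inversion and composition match by construction; the only point requiring attention is the mediator. For projections $p,q\in\G_0$, the mediator of $\sG\eS(\G)$ at $(p,q)$ is the product $p\cdot q$ in $\eS(\G)$, which by definition equals $p\otimes\mm_{pq}\otimes q$. Since $\dd(\mm_{pq})\leq q$ and $\cc(\mm_{pq})\leq p$ by Def.\ \ref{def:mediator}, the pre- and postcompositions by $p$ and $q$ are trivial, so $p\otimes\mm_{pq}\otimes q=\mm_{pq}$, as desired.

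For the two restrictions I proceed as follows. In the inverse case, projections form a meet-semilattice, and $pq=p\wedge q$ is already a projection, so the mediator $\mm_{pq}=pq=1_{p\wedge q}$ is trivial; conversely, triviality of $\mm$ forces every product of projections to be a projection, which forces involutivity together with commuting projections, i.e.\ the inverse semigroup condition (use Lem.\ \ref{lem:birestrictive} to see that trivial mediator implies $pq=qp$). For the involutive case, in $\eS(\G)$ we compute
\[
(xy)^{*}=(x\otimes\mm_{\dd(x)\cc(y)}\otimes y)^{-1}=y^{*}\otimes\mm_{\dd(x)\cc(y)}^{-1}\otimes x^{*}
\]
and
\[
y^{*}x^{*}=y^{*}\otimes\mm_{\dd(y^{*})\cc(x^{*})}\otimes x^{*}=y^{*}\otimes\mm_{\cc(y)\dd(x)}\otimes x^{*},
\]
so $(xy)^{*}=y^{*}x^{*}$ for all $x,y$ is equivalent to $\mm_{\dd(x)\cc(y)}^{-1}=\mm_{\cc(y)\dd(x)}$ for all $x,y$, i.e.\ to symmetry of the mediator on all pairs of projections of the form $(\dd(x),\cc(y))$; but every pair $(p,q)$ of projections arises this way (take $x=p$, $y=q$), giving the equivalence.

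The main technical obstacle is the identity $xy=x\otimes\mm_{\dd(x)\cc(y)}\otimes y$ in the composite $\eS\sG(X)$, because this is the only place where one must pass between the extended partial composition $\otimes$ and the total multiplication of $X$, and it crucially exploits birestrictivity together with local involutivity to justify the restriction/corestriction manipulations. Once this identity is in hand, the remainder of the theorem is a straightforward bookkeeping using the commutation $({}^px)^q={}^p(x^q)$ and the fact (Rem.\ \ref{rem:extended}) that $\otimes$ is associative whenever defined.
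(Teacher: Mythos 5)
Your plan follows the paper's own proof very closely: both composites are shown to be the identity via Rem.~\ref{rem:extended} together with the observation that $1_p\otimes\mm_{pq}\otimes 1_q=\mm_{pq}$, and the trivial/symmetric mediator cases are handled by the same computations (specializing to $x=1_p$, $y=1_q$ for the forward directions, inverting the extended composite for the converses). Two points in your write-up need repair, however. First, in the key identity $x\otimes\mm_{\dd(x)\cc(y)}\otimes y=xy$ you justify the collapse by asserting that $\dd(x)\cc(y)$ is a projection bounded by $\dd(x)$ and $\cc(y)$. This is false in general: in a quasi-involutive semigroup a product of projections need not be a projection, which is exactly why the mediator is a morphism rather than an identity. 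What you actually need is only birestrictivity, giving $\cc(\dd(x)\cc(y))\leq\dd(x)$ and $\dd(\dd(x)\cc(y))\leq\cc(y)$ so that Rem.~\ref{rem:extended} applies to both $\otimes$'s, after which the absorption laws alone yield $x\,\dd(x)\cc(y)\,y=xy$; the appeals to Lem.~\ref{lem:left/right} and to local involutivity are not needed and the projection claim should be dropped.

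Second, in the converse of the inverse case you argue that a trivial mediator "forces involutivity together with commuting projections, i.e.\ the inverse semigroup condition" (and the citation of Lem.~\ref{lem:birestrictive} for $pq=qp$ is off target --- commutation is immediate from $pq=1_{p\wedge q}=qp$). The implication "involutive with commuting projections $\Rightarrow$ inverse" is not available at this point in the paper: it is Prop.~\ref{prop:commproj}, which is stated after the theorem and proved \emph{using} it, so invoking it here would be circular. You must either quote the classical Nordahl--Scheiblich fact independently, or argue directly as the paper does: when the mediator is trivial the multiplication $x\otimes 1_{\dd(x)\wedge\cc(y)}\otimes y$ is built from restriction, corestriction and groupoid composition only, from which one checks that $x^{-1}$ is the \emph{unique} quasi-inverse of $x$, so the semigroup is inverse. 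With these two repairs your argument coincides with the paper's.
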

\end{svgraybox}
\begin{proof}
It follows from Rem.\ \ref{rem:extended} that $\sG\circ\eS$ is the identity functor
on the category of quasi-involutive semigroups.
Since $\sG$ takes the objects of an ordered groupoid with mediator bijectiviely
to the projections of the associated semigroup, the composite $\eS\circ\sG$ is also the identity functor.

For an ordered groupoid with trivial mediator the corresponding semigroup has the property
that for each element $x$ the chosen $x^*$ (corresponding to $x^{-1}$ in the groupoid)
is the \emph{unique} quasi-inverse of $x$ because in the formula for the multiplication
there are only restriction/corestriction and compositions, whence the semigroup is inverse.
Conversely, if the semigroup is inverse,
then its projections commute (and the product of two projections is a projection)
so that the mediator is trivial.

For an involutive semigroup we have $(pq)^*=q^*p^*=qp\,$,
which reads $\mm_{pq}^{-1}=\mm_{qp}\,$,
so that the associated mediator is symmetric.
For an ordered groupoid with symmetric mediator the multiplication of the associated semigroup satisfies
\[
(xy)^*=(x\otimes\mm_{\dd(x),\cc(y)}\otimes y)^{-1}
=y^{-1}\otimes\mm_{\dd(x),\cc(y)}^{-1}\otimes x^{-1}
\]
\[
=y^{-1}\otimes\mm_{\dd(y^{-1}),\cc(x^{-1})}\otimes x^{-1}=y^*x^*
\]
so that we get an involutive semigroup.
\end{proof}

\begin{proposition}[cf.\ \cite{BB}]\label{prop:commproj}
A semigroup is inverse if and only if it is quasi-involutive and has commuting projections.
In particular, any left (or right) involutive semigroup with commuting projections is inverse.
\end{proposition}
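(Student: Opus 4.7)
The plan is to split the iff. The forward direction is immediate: if $X$ is inverse, the inverse operation $x\mapsto x^{-1}$ satisfies $(xy)^{-1}=y^{-1}x^{-1}$, so $X$ is involutive (in particular quasi-involutive); moreover every idempotent $e$ in an inverse semigroup is self-inverse, hence $e^*=e$ is a projection, and projections are idempotents so they commute.

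For the backward direction, I would reduce to the classical theorem that a regular semigroup with commuting idempotents is inverse. Since every $*$-semigroup is regular via the partial isometry axiom $xx^*x=x$, and since projections commute by hypothesis, it suffices to show that every idempotent of $X$ is a projection.

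The crux is a preliminary lemma: in a quasi-involutive semigroup with commuting projections, the product of any two projections is again a projection. For $p,q\in P(X)$, birestrictivity together with $pq=qp$ gives $\dd(pq)\leq q$ and $\dd(pq)=\dd(qp)\leq p$; substituting into the absorption identity $pq=pq\cdot\dd(pq)$ telescopes the product:
\[
pq = pq\cdot\dd(pq) = p\cdot q\cdot\dd(pq) = p\cdot\dd(pq) = \dd(pq).
\]
Once this lemma is in hand, for any idempotent $e$ the computation $\dd(e)\cc(e) = e^*(ee)e^* = e^*ee^* = e^*$ (using the partial isometry identity applied to $e^*$) exhibits $e^*$ as a product of commuting projections, so by the lemma $e^*$ is itself a projection. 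Hence $e=e^{**}=e^*$ is self-adjoint and idempotent, i.e.\ a projection, as required.

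The ``in particular'' clause goes through by the same argument with one adaptation: left involutive (resp.\ right involutive) semigroups are only corestrictive (resp.\ restrictive) by Lem.\ \ref{lem:birestrictive}, so the key lemma must instead be proved using the dual absorption $pq=\cc(pq)\cdot pq$ (resp.\ $pq=pq\cdot\dd(pq)$), yielding $pq=\cc(pq)\in P(X)$ (resp.\ dually). I do not expect any genuine obstacle; the only delicate point is noticing that the one-sided (co)restriction hypothesis is exactly what the telescoping in the key lemma needs, so the hypothesis of full birestrictivity can be weakened to either side.
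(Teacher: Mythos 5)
Your proof is correct, but it takes a genuinely different route from the paper. The paper's backward direction runs through its own ESN machinery: commuting projections make $P(X)$ a semilattice, so the ordered groupoid $\sG(X)$ has trivial mediator, and Thm.\ \ref{thm:ESN} then yields that $X$ is inverse; for the ``in particular'' clause the paper shows that commuting projections restore restrictivity for a left involutive semigroup (from $\dd(xy)\dd(y)=\dd(xy)$), so that it is quasi-involutive and the main equivalence applies. You bypass the groupoid picture entirely: your telescoping lemma $pq=\dd(pq)$ (resp.\ $pq=\cc(pq)$), which indeed needs only one-sided (co)restrictivity plus the fact that domains and codomains are projections in a locally involutive semigroup, combined with the identity $e^*=\dd(e)\cc(e)$, shows that every idempotent is a projection, and you then invoke the classical characterization of inverse semigroups as regular semigroups with commuting idempotents (standard, e.g.\ in Lawson \cite{L}). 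Both arguments are sound. The paper's stays internal to its framework and illustrates the mediator concept (trivial mediator $\Leftrightarrow$ inverse), at the price of resting on Thm.\ \ref{thm:ESN}; yours is more elementary, isolates the useful fact that commuting projections multiply to projections under either one-sided hypothesis, and treats the left/right involutive case directly by dualizing the absorption law rather than by first upgrading to quasi-involutivity --- the one external ingredient being the regular-plus-commuting-idempotents theorem, which the paper does not need.
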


\begin{proof}
Inverse semigroups are involutive and hence quasi-involutive, and their projections commute.
Conversely, if the projections of a $*$-semigroup $X$ commute,
then they compose so that the set $P(X)$ of projections is a semilattice.
This implies that if $X$ is moreover quasi-involutive,
then its groupoid $\sG(X)$ has trivial mediator so that $X$ is inverse.
For the second statement note that by Lem.\ \ref{lem:birestrictive} any left involutive semigroup is corestrictive.
If the projections commute, then it is also restrictive and hence quasi-involutive.
Indeed, we always have $\dd(xy)\dd(y)=\dd(xy)\,$.
Under commutation of projections, this implies $\dd(y)\dd(xy)=\dd(xy)$ whence $\dd(xy)\leq\dd(y)\,$.
\end{proof}

\section{Involutive and balanced sets over $*$-semigroups}
\vspace{2ex}

\subsection{Left $*$-homomorphisms}
\leavevmode\par\vspace{1ex}

Let us say that a $*$-morphism $f$ of $*$-semigroups is {\em a left $*$-homomorphism\/} if
$f(xy)=f(x)f(\dd(x)y)$
holds for all $x,y\in X\,$.
%We say $f$ {\em is multiplicative\/} or that $f$ is a homomorphism if $f(xy)=f(x)f(y)$ holds.

\begin{remark}\label{rem:preproj}
If $f$ is a left $*$-homomorphism, then
\[
f(\dd(x))=f(x^*)f(xx^*x)=f(x)^*f(x)=\dd(f(x))\text{ and }
\]
\[
f(\cc(x))=f(x)f(x^*xx^*)=f(x)f(x)^*=\cc(f(x))
\]
so that left $*$-homomorphisms preserve left and right projections.
A left $*$-homomorphism also preserves the left partial order (\cite{F7}, Lem.\ 2.30).
\end{remark}

\begin{exercise}\label{ex:leftstar}
Show that any $*$-homomorphism is a left $*$-homomorphism.
Show that the composite of two left $*$-homomorphisms is a left $*$-homomorphism.
Show that for a surjective left $*$-homomorphism $f:X\ra Y\,$, if $X$ is a left involutive semigroup, then so is $Y\,$.
\end{exercise}

\begin{remark}
Any homomorphism of inverse semigroups is a $*$-homomorphism.
However, since typically we are working with semigroups that are not inverse we have to check both properties:
preservation of the involution and preservation of the multiplicative structure.

Note also that the set-theoretical inverse of a bijective $*$-homomorphism is a $*$-homomorphism, while
the same property does in general not hold for bijective left $*$-homomorphisms, cf.\  Rem.\ \ref{rem:comparison}.
\end{remark}

\begin{remark}\label{rem:fxy}
If $f:X\ra Y$ is a left $*$-homomorphism,
and $y=\dd(x)y\,$, then $f(xy)=f(x)f(y)$.
\end{remark}

\begin{lemma}\label{lem:fdt}
A left $*$-homomorphism $f$ of left involutive semigroups is a $*$-homomorphism
if and only if $f(px)=f(p)f(x)$ for any projection $p$ and element $x\,$.
\end{lemma}
\begin{proof}
Suppose that a left $*$-homomorphism $f$ has the stated property.
Then we have
\[
f(xy)=f(x)f(\dd(x)y)=f(x)f(\dd(x))f(y)=f(x)\dd(f(x))f(y)=f(x)f(y)\,.
\]
\end{proof}

\begin{definition}\label{def:etale}
We shall say that a left $*$-homomorphism $f:X\ra S$ of $*$-semigroups
is {\em \'etale\/} if for every $x\in X$ the restriction
$f:xX\ra f(x)S$ is a bijection,
where $xX=\{\, xy\mid y\in X\,\}\,$.
\end{definition}

\begin{exercise}\label{ex:fg}
Let $f$ and $g$ be composable left $*$-homomorphisms.
Show that if $f$ and $fg$ are \'etale then so is $g\,$.
\end{exercise}

\begin{remark}\label{rem:coset}
We introduce the notation $X\downarrow p=\{x\in X\,|\,\cc(x)\leq p\}$ for any $p\in P(X)\,$.
Note that if $X$ is a left involutive semigroup,
then $X\downarrow\cc(x)=xX\,$, and we have
$xX\subseteq yX$ if and only if $x\in yX$ if and only if $\cc(x)\leq\cc(y)$
if and only if $x=\cc(y)x\,$.
\end{remark}

\begin{proposition}\label{prop:lift}
For a left $*$-homomorphism $f:X\ra S$ of left involutive semigroups the following three properties are equivalent:
\begin{enumerate}%[{\em (1)}]
\item{$f$ is \'etale;}
\item{the restriction $f:X\downarrow p\ra S\downarrow f(p)$ is a bijection for all $p\in P(X)\,$;}
\item{for a projection $p\in P(X)$, any equation $s=f(p)s$ in $S\,$ lifts uniquely to an equation $x=px$ in $X\,$.}
\end{enumerate}
\end{proposition}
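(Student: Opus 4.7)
The plan is to show that the three conditions are equivalent reformulations of one another via the preceding material. As a preliminary observation, for any $p\in P(X)$ the element $f(p)$ is a projection in $S$: by Rem.\ \ref{rem:preproj} a left $*$-homomorphism preserves left and right projections, and these coincide with projections in a locally involutive (a fortiori, left involutive) semigroup. Thus $S\downarrow f(p)$ and the equation $s=f(p)s$ in (3) are well-formed.

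For (1) $\Leftrightarrow$ (2) I would invoke Rem.\ \ref{rem:coset} in both $X$ and $S$, together with preservation of codomain by $f$. It gives $xX = X\downarrow\cc(x)$ and $f(x)S = S\downarrow\cc(f(x)) = S\downarrow f(\cc(x))$, so the restriction appearing in (1) is literally the map $f:X\downarrow\cc(x)\to S\downarrow f(\cc(x))$. Since every projection $p$ satisfies $p=\cc(p)$, the set $\{\cc(x):x\in X\}$ exhausts $P(X)$, and (1) and (2) become two parametrizations of the same family of bijections.

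For (2) $\Leftrightarrow$ (3) I would translate the order-theoretic description of $X\downarrow p$ into an equational one. By Lem.\ \ref{lem:left/right}(1) one has $\cc(x)\leq p$ iff both $px=x$ and $x^*p=x^*$, and the last clause of that lemma shows that in a left involutive semigroup the second identity is already implied by the first. Hence $X\downarrow p = \{x\in X\mid px=x\}$, and similarly $S\downarrow f(p) = \{s\in S\mid f(p)s=s\}$. Under these identifications, the bijectivity in (2) is precisely the unique-lifting property in (3): surjectivity gives existence of an $x$ with $px=x$ and $f(x)=s$, and injectivity gives its uniqueness.

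I do not anticipate a genuine obstacle here: the argument consists of three short translations enabled by Rem.\ \ref{rem:coset}, Rem.\ \ref{rem:preproj}, and Lem.\ \ref{lem:left/right}. The only points requiring care are verifying that $f(p)$ is itself a projection (so the downward-cone notation on the $S$-side is legitimate) and reducing the two-sided condition of Lem.\ \ref{lem:left/right}(1) to the single equation $px=x$ available in the left involutive setting.
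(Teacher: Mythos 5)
Your argument is correct and is essentially the paper's own proof: the paper likewise reduces everything to the equational description of the down-sets via Lem.\ \ref{lem:left/right} (giving (2)$\Leftrightarrow$(3)) and to Rem.\ \ref{rem:coset} identifying $xX$ with $X\downarrow\cc(x)$ (giving the link to (1)), only pairing the equivalences as (1)$\Leftrightarrow$(3) rather than your (1)$\Leftrightarrow$(2). Your extra check that $f(p)$ is a projection (via Rem.\ \ref{rem:preproj}) is a harmless refinement of the same argument.
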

\begin{proof}
By Lem.\ \ref{lem:leftright}, in any left involutive semigroup the identity $px=x$
amounts to the identities $p\cc(x)=\cc(x)=\cc(x)p\,$,
i.e.\ to the relation $\cc(x)\leq p\,$.
This shows the equivalence between 2 and 3 while Rem.\ \ref{rem:coset}
shows the equivalence between 1 and 3.
\end{proof}

\begin{lemma}\label{lem:reflect}
An \'etale left $*$-homomorphism reflects right projections.
\end{lemma}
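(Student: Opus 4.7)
The plan is to exploit the equivalence (1)$\Leftrightarrow$(2) of Prop.\ \ref{prop:lift} by applying the étale bijection at the single well-chosen projection $p=\cc(x)$. Recall from Rem.\ \ref{rem:preproj} that any left $*$-homomorphism already preserves codomains, so that $f(\cc(x))=\cc(f(x))$; the task is to turn this preservation statement into a reflection statement using the étale hypothesis.

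Suppose $f(x)$ is a right projection of $S$, i.e.\ $\cc(f(x))=f(x)$. Combining this with $f(\cc(x))=\cc(f(x))$ gives $f(\cc(x))=f(x)$. Now take $p=\cc(x)\in P(X)$ and apply characterization (2) of Prop.\ \ref{prop:lift}: the restriction
\[
f:X\downarrow\cc(x)\longrightarrow S\downarrow f(\cc(x))
\]
is a bijection, hence in particular injective. Both $x$ and $\cc(x)$ lie in $X\downarrow\cc(x)$ (trivially $\cc(x)\leq\cc(x)$, and $\cc(\cc(x))=\cc(x)$), and we have just seen that they have the same image under $f$. Injectivity therefore yields $x=\cc(x)$, so $x$ is a projection in $X$, and in particular a right projection, as desired.

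I do not expect any genuine obstacle here: the only conceptual step is realising that the reflection of right projections is exactly what the injective half of the étale condition buys us, once Rem.\ \ref{rem:preproj} has reduced the hypothesis $\cc(f(x))=f(x)$ to the equation $f(\cc(x))=f(x)$ taking place in the fibre $X\downarrow\cc(x)$. Note also that the statement implicitly assumes $X$ is left involutive, since Prop.\ \ref{prop:lift} is formulated in that setting; without this hypothesis the identification $X\downarrow\cc(x)=xX$ of Rem.\ \ref{rem:coset} (used to pass between characterizations (1) and (2) of being étale) need not hold.
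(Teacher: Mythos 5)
Your argument is correct as far as it goes, and its core is the same as the paper's: reduce the hypothesis to $f(\cc(x))=f(x)$ via Rem.\ \ref{rem:preproj} and then use injectivity of the \'etale restriction on the principal fibre containing both $x$ and $\cc(x)$. The problem is the detour through Prop.\ \ref{prop:lift}: that proposition is stated for left $*$-homomorphisms of \emph{left involutive} semigroups, so by invoking its equivalence (1)$\Leftrightarrow$(2) you import a hypothesis the lemma does not carry, and your closing claim that the statement ``implicitly assumes $X$ is left involutive'' is a misreading. The lemma is stated, and proved in the paper, for arbitrary $*$-semigroups, and no identification $X\downarrow\cc(x)=xX$ is needed: one works directly with Def.\ \ref{def:etale}. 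Indeed $x=x\dd(x)\in xX$ and $\cc(x)=xx^*\in xX$, and since $f(x)=\cc(f(x))=f(\cc(x))$ by Rem.\ \ref{rem:preproj} (which holds for any left $*$-homomorphism of $*$-semigroups), injectivity of $f:xX\to f(x)S$ gives $x=\cc(x)$, i.e.\ $x$ is a right projection. So the extra hypothesis is avoidable, not implicit.

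Two smaller points. First, in the general setting the correct conclusion is ``$x$ is a right projection'' ($\cc(x)=x$); calling it a projection, as you do, is only justified because in your restricted (left involutive, hence locally involutive) setting right projections and projections coincide. Second, your verification that $x,\cc(x)\in X\downarrow\cc(x)$ quietly uses that $\cc(x)$ is a projection (so that $\cc(\cc(x))=\cc(x)$), which again is a feature of the left involutive case rather than of general $*$-semigroups; this is another sign that the fibre $xX$ of Def.\ \ref{def:etale}, not $X\downarrow p$, is the right object for the stated generality. With the proof rewritten to use $xX$ directly, your argument becomes exactly the paper's and covers the lemma as stated.
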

\begin{proof}
Let $f:X\ra S$ be an \'etale left $*$-homomorphism of $*$-semigroups.
Suppose that $f(x)$ is a right projection,
so that $f(x)=\cc(f(x))=f(\cc(x))$ (Rem.\ \ref{rem:preproj}).
We have $x,\cc(x)\in xX\,$, so $x=\cc(x)$ by the \'etale property, whence $x$ is a right projection.
\end{proof}

\begin{proposition}\label{prop:starhomo}
Let $X,Y$ and $S$ be $*$-semigroups.
In the diagram below suppose $f$ is a $*$-homomorphism,
$h$ is an \'etale $*$-homomorphism,
and $\psi$ is a left $*$-homomorphism.
\[
\xymatrix{X \ar[dr]_-{f} \ar[rr]^-{\psi}
&& Y \ar[dl]^-{h}\\
& S}
\]
Then $\psi$ is a $*$-homomorphism. Moreover, if $f$ is \'etale, then so is $\psi\,$.
\end{proposition}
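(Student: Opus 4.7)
The plan is to use the étale property of $h$ as a unique-lifting device, together with the observation that because $\psi$ is a left $*$-homomorphism, both $\psi(x_1x_2)$ and $\psi(x_1)\psi(x_2)$ automatically lie in the subset $\psi(x_1)Y$, on which $h$ is injective.

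Concretely, fix $x_1,x_2\in X$. The commutativity of the triangle gives $h\psi=f$. First, note that $\psi(x_1)\psi(x_2)\in\psi(x_1)Y$ trivially, and that since $\psi$ is a left $*$-homomorphism,
\[
\psi(x_1x_2)=\psi(x_1)\psi(\dd(x_1)x_2)\in\psi(x_1)Y
\]
as well. Second, apply $h$ to both expressions: since $f$ is a $*$-homomorphism and hence multiplicative,
\[
h(\psi(x_1x_2))=f(x_1x_2)=f(x_1)f(x_2)=h(\psi(x_1))h(\psi(x_2))=h(\psi(x_1)\psi(x_2)).
\]
Third, invoke Def.\ \ref{def:etale}: the restriction $h:\psi(x_1)Y\to f(x_1)S$ is a bijection, so the two elements of $\psi(x_1)Y$ having the same image under $h$ must coincide. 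This yields $\psi(x_1x_2)=\psi(x_1)\psi(x_2)$, proving $\psi$ is multiplicative, and hence a $*$-homomorphism (it was already a $*$-morphism by hypothesis).

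For the second statement, now that $\psi$ is known to be a (left) $*$-homomorphism and $f=h\psi$, the claim that $\psi$ is étale whenever $f$ is étale is exactly the content of Exercise \ref{ex:fg}, applied with the roles $f\leadsto h$ and $g\leadsto\psi$.

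The only genuinely nontrivial step is the first one: recognizing that the weaker ``left'' multiplicativity of $\psi$ is already enough to place $\psi(x_1x_2)$ inside the coset $\psi(x_1)Y$, where the étaleness of $h$ can be used to conclude. After that, the argument is a direct unique-lifting, and the second assertion reduces to a quoted exercise.
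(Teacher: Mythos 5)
Your proof is correct and follows essentially the same route as the paper's: you use the left $*$-homomorphism identity to place both $\psi(x_1x_2)$ and $\psi(x_1)\psi(x_2)$ in $\psi(x_1)Y$, compare their images under the \'etale map $h$ via $h\psi=f$, and conclude by injectivity of $h$ on that coset, with the second assertion delegated to Exercise~\ref{ex:fg} exactly as in the paper.
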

\begin{proof}
Let $x,y\in X\,$.
Since $\psi(xy)=\psi(x)\psi(\dd(x)y)$
the two elements $\psi(xy)$ and $\psi(x)\psi(y)$ are both members
of $\psi(x)Y\,$.
Moreover, we have
\[
h(\psi(xy))=f(xy)=f(x)f(y)=h\psi(x)h\psi(y)=h(\psi(x)\psi(y))
\]
so that $\psi(xy)=\psi(x)\psi(y)$ because $h$ is \'etale.
The second assertion is Ex.\ \ref{ex:fg}.
\end{proof}

\subsection{Involutive $S$-sets}\label{subsec:action}
\leavevmode\par\vspace{1ex}

\begin{definition}\label{def:action}
Let $f:X\ra S$ be an \'etale left $*$-homomorphism of $*$-semigroups.
Then we may define a right action of $S$ on $X\,$:
for any $x\in X$ and $s\in S\,$, let $xs$ denote the unique element of $X$ such that
$\cc(x)(xs)=xs$ and $f(xs)=f(x)s\,$.
\end{definition}

This is well defined since fixpoints of $X$ under left multiplication by $\cc(x)$
are precisely the elements of $xX$ which by assumption are taken bijectively to $f(x)S\,$.
We call this action {\em the canonical action of $S$ on $X$ associated with $f$\/}.
This right action preserves the codomain.

\begin{exercise}
Verify that the action defined in Def.\ \ref{def:action} is associative: $(xs)t= x(st)\,$.
\end{exercise}

\begin{lemma}\label{lem:xfy}
Let $f:X\ra S$ be an \'etale left $*$-homomorphism of $*$-semigroups.
Then for any $x,y\in X$ we have $xf(\dd(x)y)=xy\,$ and in particular $xf(\dd(x))=x\,$.
Moreover, $f$ is a $*$-homomorphism if and only if $xf(y)=xy$ for all $x,y\in X\,$, in which case
there is a mixed associativity $(xy)s=x(ys)\,$ for all $s\in S\,$.
\end{lemma}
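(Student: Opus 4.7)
The plan is to verify the defining conditions of the canonical action in each case and invoke its uniqueness, which is underwritten by the \'etale bijection of Def.\ \ref{def:etale}. Recall from the remarks following Def.\ \ref{def:action} that in any $*$-semigroup the set $xX$ coincides with the fixpoint set $\{a\in X:\cc(x)a=a\}$, so one may freely translate between these two descriptions.

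For the first identity $xf(\dd(x)y)=xy$, I would check that $xy$ itself satisfies the two conditions characterising the action element $xf(\dd(x)y)$. The fixpoint condition $\cc(x)(xy)=xy$ follows from the absorption law $\cc(x)x=x$, while $f(xy)=f(x)f(\dd(x)y)$ is the very definition of a left $*$-homomorphism. Uniqueness of the action then forces $xf(\dd(x)y)=xy$. Taking $y=\dd(x)$ and using the idempotence of $\dd(x)$ together with $x\dd(x)=x$ specialises this to $xf(\dd(x))=x$.

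For the equivalence, suppose first that $f$ is multiplicative. Then $xy$ satisfies $\cc(x)(xy)=xy$ and $f(xy)=f(x)f(y)$, which are precisely the two conditions characterising $xf(y)$; uniqueness yields $xf(y)=xy$. Conversely, if $xf(y)=xy$ for all $x,y$, apply $f$ to both sides and combine with the built-in identity $f(xs)=f(x)s$ of the action (specialised to $s=f(y)\in S$) to obtain $f(xy)=f(xf(y))=f(x)f(y)$.

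For the mixed associativity under multiplicativity, the strategy is to show that $x(ys)$ lies in $(xy)X$ and shares its $f$-image with $(xy)s$, so that the \'etale bijection $f:(xy)X\to f(xy)S$ forces $x(ys)=(xy)s$. Membership uses the fixpoint description of $yX$: since $ys\in yX$, we may write $ys=yt$ for some $t\in X$, whence $x(ys)=(xy)t\in(xy)X$. Equality of $f$-images is a direct calculation $f(x(ys))=f(x)f(ys)=f(x)(f(y)s)=(f(x)f(y))s=f(xy)s=f((xy)s)$, using multiplicativity and associativity in $S$. The main point requiring vigilance is the dual use of juxtaposition for semigroup multiplication and for the canonical action; the argument converts from one to the other precisely at the step $ys=yt$, which is where the genuine content of mixed associativity lies.
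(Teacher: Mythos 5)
Your proposal is correct and follows essentially the same method as the paper: both arguments characterise the relevant elements by the fixpoint condition together with their image under $f$, and conclude by the uniqueness of liftings that \'etaleness provides. The only slight divergence is in the mixed associativity, where the paper chains $(xy)s=(xf(y))s=x(f(y)s)=xf(ys)=x(ys)$ using the associativity of the canonical action, whereas you apply the \'etale bijection $f:(xy)X\to f(xy)S$ directly; both are valid and of the same difficulty.
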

\begin{proof}
By definition, $f(xf(\dd(x)y))=f(x)f(\dd(x)y)=f(xy)\,$.
Since $xy$ is a left $\cc(x)$-fixpoint we get  $xf(\dd(x)y)=xy\,$ by \'etaleness of $f$.
For $y=\dd(x)$ this yields $xf(\dd(x))=x\,$.
If $f$ is a $*$-homomorphism then $xy=xf(\dd(x)y)=xf(\dd(x))f(y)=xf(y)\,$.
Conversely, if $xy=xf(y)$ then $f(xy)=f(xf(y))=f(x)f(y)\,$.
For the last assertion we have $(xy)s=(xf(y))s=x(f(y)s)=xf(ys)=x(ys)\,$.
\end{proof}

\begin{definition}\label{def:Sset}
Let $S$ be a $*$-semigroup.
{\em An involutive $S$-set\/} is an involutive set $X\,$,
a $*$-morphism $f:X\ra S\,$, and an associative right action by $S$ in $X\,$,
written $xs\,$,
such that for all $x\in X$ and $s\in S$ we have:
\begin{enumerate}%[(1)]
\item{$f(xs)=f(x)s$ (equivariance);}
\item{$x\dd(f(x))=x$ (unit).}
\end{enumerate}
\end{definition}

\begin{proposition}\label{prop:Sset}
Let $S$ be a $*$-semigroup,
and $f:X\ra S$ be an involutive $S$-set.
Then the product in $X$ defined by $x\cdot y=xf(y)$ is associative,
and it endows $X$ with the structure of $*$-semigroup
in such a way that $f$ becomes an \'etale $*$-homomorphism. The given $S$-action coincides with the canonical
action associated with $f$.
%Finally, if $f$ is a left involutive $S$-set, then $X$ is a left involutive semigroup.
\end{proposition}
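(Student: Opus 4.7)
The plan is to verify the $*$-semigroup axioms, then \'etaleness, then the coincidence with the canonical action, drawing throughout on the two involutive $S$-set axioms: equivariance $f(xs)=f(x)s$ and the unit $x\dd(f(x))=x$. A free consequence of equivariance that I would record at the outset is that $f$ is multiplicative for the new product, since $f(xy)=f(xf(y))=f(x)f(y)$; every subsequent step rests on this.

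Associativity of the new product is then immediate: unpacking $(xy)z=(xf(y))f(z)$ and $x(yz)=xf(yz)=xf(y)f(z)$ and appealing to associativity of the $S$-action reduces both sides to a single action of $f(y)f(z)$ on $x$. The partial-isometry identity $xx^*x=x$ is equally short: since $f(x^*)=f(x)^*$, the triple product $xx^*x$ equals the double $S$-action $(xf(x)^*)f(x)$, which collapses by action-associativity to $x(f(x)^*f(x))=x\dd(f(x))$, and the unit axiom finishes it.

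The main obstacle is \'etaleness, i.e., bijectivity of $f:xX\to f(x)S$ for each $x$. The key tool is an absorption identity $xs=x\dd(f(x))s$, a consequence of the unit axiom together with associativity of the $S$-action. For surjectivity, given $s\in S$ I would take $y=x^*(xs)$, with the inner $xs$ denoting the $S$-action and the outer product being the new one; evaluating $xy$ via $xy=xf(y)$ gives $xy=x\dd(f(x))s=xs$, whose $f$-image is $f(x)s$. For injectivity, if $f(xy_1)=f(xy_2)$, i.e., $f(x)f(y_1)=f(x)f(y_2)$ in $S$, then multiplying both sides on the left by $f(x)^*$ and using absorption converts $xf(y_1)$ and $xf(y_2)$ into the same element of $X$, giving $xy_1=xy_2$.

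Finally, coincidence with the canonical action follows from the uniqueness clause of Definition~\ref{def:action}: that action is characterized by $\cc(x)(xs)=xs$ and $f(xs)=f(x)s$. Equivariance supplies the second condition directly, while for the first I would compute $\cc(x)=xx^*=xf(x)^*$ as in the partial-isometry step and then repeat the same action-associativity plus unit maneuver to obtain $\cc(x)(xs)=xs$.
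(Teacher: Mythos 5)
Your proposal is correct and follows essentially the same route as the paper's proof: multiplicativity of $f$ from equivariance, associativity and the partial-isometry law via action-associativity and the unit axiom, surjectivity of $f:xX\to f(x)S$ by exhibiting $xs$ as $x\cdot\bigl(x^*(xs)\bigr)$, and injectivity via the absorption $x\dd(f(x))s=xs$ (the paper phrases this as $y=xx^*y$ for $y\in xX$, which is the same computation). Your explicit appeal to the uniqueness clause of Definition~\ref{def:action} for the coincidence of actions is just a slightly more detailed version of the paper's closing remark.
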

\begin{proof}
For associativity observe that
\[
(x\cdot y)\cdot z=(xf(y))\cdot z=xf(y)f(z)=xf(yf(z))=x\cdot (yf(z))=x\cdot(y\cdot z)\,.
\]
It follows that $f$ is a $*$-homomorphism.
The semigroup $X$ is a $*$-semigroup because
\[
x\cdot x^*\cdot x=xf(x^*\cdot x)=xf(x^*f(x))=xf(x)^*f(x)=x\dd(f(x))=x\,.
\]
To see that $f$ is \'etale we fix $x\in X\,$ and
show that $f:x\cdot X\ra f(x)S$ is a bijection.
For surjectivity, since for any $s\in S$, we have $f(xs)=f(x)s\,$,
it suffices to show that $xs\in x\cdot X\,$,
which follows from the following identifications:
\[
xs=xf(x^*)f(x)s=xf(x^*)f(xs)=\cc(x)\cdot(xs)\,.
\]
For injectivity suppose that $f(y)=f(z)\,$ for $y,z\in x\cdot X\,$.
Then
\[
y=\cc(x)\cdot y=\cc(x)f(y)=\cc(x)f(z)=\cc(x)\cdot z=z\,.
\]
It is then clear that the given $S$-action is the canonical one associated with $f\,$.
\end{proof}

\begin{definition}\label{def:strong}
A {\em strongly involutive\/} $S$-set is an involutive $S$-set $f:X\ra S$
such that for the product of Prop. \ref{prop:Sset} we have the identity
\[
(xs)^*= (\dd(x)s)^*x^*
\]
for all $x\in X$ and $s\in S\,$.
\end{definition}

\begin{remark}\label{rem:leftlift}
Spelled out, the preceding definition reads
\[
(xs)^*=(x^*f(x)s)^*f(x)^*=(x^*f(xs))^*f(x^*)\,.
\]
If $f:X\ra S$ is a strongly involutive $S$-set over a left involutive semigroup $S\,$,
then $X$, equipped with the product of Prop.\ \ref{prop:Sset}, is a left involutive semigroup as well.
Indeed, for all $x,y\in X$ we have $(xy)^*=(xf(y))^*=(\dd(x)f(y))^*x^*=(\dd(x)y)^*x^*\,$.
\end{remark}

\begin{proposition}
An \'etale left $*$-homomorphism of $*$-semigroups $f:X\ra S$
with its canonical action {\em (Def.\ \ref{def:action})\/} is an involutive $S$-set.
If $X$ and $S$ are left involutive semigroups, then $f$ is a strongly involutive $S$-set.
\end{proposition}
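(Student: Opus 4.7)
My plan proceeds in two stages, one for each claim. For the first claim I would verify the four axioms of Def.~\ref{def:Sset} one at a time. Equivariance $f(xs)=f(x)s$ is built into Def.~\ref{def:action}, and the $*$-morphism property is given by hypothesis. The unit axiom $x\dd(f(x))=x$ follows by étale uniqueness: $x$ itself lies in the $\cc(x)$-fixpoint slice (since $xx^*x=x$) and has $f$-image $f(x)\dd(f(x))=f(x)$ because $f(x)$ is a partial isometry in $S$, so $x$ is the unique lift. Associativity of the action is the stated exercise following Def.~\ref{def:action}: both $(xs)t$ and $x(st)$ lie in the $\cc(x)$-slice and share the $f$-image $f(x)st$.

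For the second claim, assume $X$ and $S$ are left involutive; I need to verify $(xs)^*=(x^*f(x)s)^*f(x)^*$. The strategy is to bring $xs$ into the range of the left involutive identity of $X$ by identifying it with an internal product. Set $u:=x^*(f(x)s)$. Étale uniqueness first identifies $u=\dd(x)\cdot s$, since both sides lie in the $\dd(x)$-slice with common $f$-image $\dd(f(x))s$. Then Lem.~\ref{lem:xfy}, together with $\dd(x)u=u$, the associativity of the action, and the unit axiom just established, gives $xu=xf(u)=x\cdot\dd(f(x))s=xs$ inside $X$. Applying the left involutive identity of $X$ to this internal product yields $(xs)^*=(xu)^*=(\dd(x)u)^*x^*=u^*x^*$.

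It remains to identify the product $u^*x^*$ with the action $u^*\cdot f(x)^*$ by a second application of étale uniqueness. The slices match because $\dd(xs)=\dd(xu)=u^*x^*\cdot xu=u^*\dd(x)u=u^*u=\dd(u)$, placing $(xs)^*$ in the same $\cc(u^*)=\dd(u)$-slice where $u^*\cdot f(x)^*$ is defined. The $f$-images match because $f((xs)^*)=(f(x)s)^*=(\dd(f(x))s)^*f(x)^*=f(u^*\cdot f(x)^*)$, using left involutivity of $S$. The second expression $(x^*f(xs))^*f(x^*)$ reduces to the same element via $f(xs)=f(x)s$ and $f(x^*)=f(x)^*$. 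The main obstacle is the clean identification $xs=xu$ as an internal product, which chains Lem.~\ref{lem:xfy}, associativity of the action, and the unit axiom; with this in hand, the left involutive identity of $X$ supplies $(xs)^*=u^*x^*$, and the domain computation $\dd(xs)=\dd(u)$ closes the argument.
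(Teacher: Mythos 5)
Your proof is correct and follows essentially the same route as the paper: the unit and equivariance axioms are routine, the internal identity $(xs)^*=(x^*f(xs))^*x^*=u^*x^*$ comes from left involutivity of $X$ together with Lem.~\ref{lem:xfy}, the $f$-images are matched using left involutivity of $S$, and the conclusion follows by uniqueness of \'etale liftings in the $\dd(u)$-slice. The only cosmetic differences are your auxiliary identification $u=\dd(x)\cdot s$ (not actually needed) and deriving $xs=xu$ directly rather than via $x^*(xs)=x^*f(xs)$ as the paper does.
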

\begin{proof}
Requirement (1) in Def.\ \ref{def:Sset} is the first part of Def.\ \ref{def:action},
while (2) follows from Rem.\ \ref{rem:preproj} and Lem.\ \ref{lem:xfy}.

For the second assertion, observe (cf.\  also Rem.\ \ref{rem:comparison} below)
that by Lem.\ \ref{lem:xfy}, the identity map $X\ra(X,\cdot)$ is
a bijective left $*$-homomorphism, where $(X,\cdot)$ is the set underlying $X$ equipped with the product
of Prop.\ \ref{prop:Sset}.
Since $X$ is a left involutive semigroup, it follows from Ex.\ \ref{ex:leftstar}
that $(X,\cdot)$ as well is a left involutive semigroup.
Therefore $f:X\ra S$ is a strongly involutive $S$-set.
\end{proof}

The following theorem sums up the results so far obtained.

\begin{svgraybox}
\begin{theorem}\label{thm:Sset}
Let $S$ be a $*$-semigroup.
\begin{enumerate}
\item[(a)]
The category of involutive $S$-sets and $S$-equivariant $*$-morphisms over $S$
is isomorphic to the category of
\'etale $*$-homomorphisms $X\ra S$ and \'etale $*$-homomorphisms over $S\,$.
\item[(b)]
If $S$ is a left involutive semigroup, then the category of strongly involutive $S$-sets
is isomorphic to the category of
\'etale $*$-homomorphisms $X\ra S$ with left involutive semigroup $X\,$.
\end{enumerate}
\end{theorem}
\end{svgraybox}

\begin{remark}\label{rem:comparison}
We may pass from a given \'etale left $*$-homomorphism $f:X\ra S$
of $*$-semigroups to an involutive $S$-set,
and hence to an \'etale $*$-homomorphism.
This amounts to simply changing the product in $X$
so that $f$ becomes a $*$-homomorphism.
Let us denote the new product in $X$ by
\[
x\cdot y=xf(y)\,,
\]
where the action is the canonical action associated with $f\,$.
Lem. \ref{lem:xfy} implies then that
\[
xy=xf(\dd(x)y)=x\cdot (\dd(x)y)
\]
so that the identity map $\iota:X\ra (X,\cdot)$
is a bijective left $*$-homomorphism
\[
\xymatrix{X \ar[dr]^-f_-{\text{\'etale left $*$-homo}} \ar[rr]^\iota
&& (X,\cdot) \ar[dl]_-f^-{\text{\'etale $*$-homo}}\\
& S}
\]
which is \'etale by Ex. \ref{ex:fg}.
The upshot is that an \'etale left $*$-homomorphism
is almost a $*$-homomorphism:
it differs from an \'etale $*$-homomorphism only by a bijective \'etale left $*$-homomorphism.
By Lem. \ref{lem:xfy}, the given $f:X\ra S$ is a $*$-homomorphism
if and only if $\iota$ is a $*$-isomorphism.
\end{remark}

\subsection{Balanced $S$-sets}\label{subsec:rightleft}
\leavevmode\par\vspace{1ex}

There is another way to think of a strongly involutive $S$-set.
This is what we shall call a balanced $S$-set.

\begin{definition}\label{def:bal}
Let $S$ be a $*$-semigroup.
{\em A balanced $S$-set\/} is an involutive $S$-set
$f:X\ra S$ (Def.\ \ref{def:Sset}) also satisfying:
\begin{enumerate}%[(1)]
\item{$(x^*r^*)^*s=((xs)^*r^*)^*\,$;}
\item{$(xf(x^*))^*=xf(x^*)\,$,}
\end{enumerate}
for all $x\in X$ and $r,s\in S\,$.
\end{definition}

\begin{remark}\label{rem:bal}
If we define a \emph{left} action $rx=(x^*r^*)^*$ %(Rem.\ \ref{rem:leftaction}),
then condition 1 of Def.\ \ref{def:bal} reads $(rx)s=r(xs)\,$ and
condition 2 reads $f(x)x^*=xf(x^*)\,$.
If $S$ is an involutive semigroup, then this left action is associative and $f$ is left equivariant.
\end{remark}

\begin{lemma}\label{lem:four}
Let $S$ be a left involutive semigroup and $f:X\ra S$ be a strongly involutive $S$-set.
Then $f$ is a balanced $S$-set.
\end{lemma}
\begin{proof}
Let $(X,\cdot)$ denote the product of Prop.\ \ref{prop:Sset}.
Then $(X,\cdot)$ is a left involutive semigroup because $f$
is strongly involutive and $S$ is left involutive, cf.\  Rem.\ \ref{rem:leftlift}.
Therefore $xf(x^*)=x\cdot x^*$ is the codomain projection of $x$ in $(X,\cdot)$ and thus self-adjoint,
establishing condition 2 of a balanced $S$-set.
Condition 1 may be rewritten as $(ys)^*=(xs)^*r^*$ for $y=(x^*r^*)^*\,$.
Now we get
\[
(ys)^*=(\dd(y)s)^*y^*=(\cc(x^*r^*)s)^*x^*r^*=(\cc(x^*)s)^*x^*r^*=(\dd(x)s)^*x^*r^*=(xs)^*r^*
\]
as required.
Therefore, $f$ is a balanced $S$-set.
\end{proof}

\begin{proposition}\label{prop:bal}
Let $S$ be a $*$-semigroup.
Then a balanced $S$-set is a strongly involutive $S$-set.
In particular, for a left involutive semigroup $S\,$,
balanced $S$-sets are the same as strongly involutive $S$-sets.
\end{proposition}
\begin{proof}
The second assertion follows from the first together with Lem.\ \ref{lem:four}.
Suppose that an involutive $S$-set $f:X\ra S$ is balanced.
Then $f$ satisfies
\[
f(x)(x^*f(x))=(f(x)x^*)f(x)=xf(x^*)f(x)=x
\]
so that
\[
(xs)^*=s^*x^*=s^*(f(x^*)(xf(x^*)))=(s^*f(x^*))(xf(x^*))
\]
\[
=(f(x)s)^*(xf(x^*))
=(f(xs)^*x)f(x^*)=(x^*f(xs))^*f(x^*)
\]
and $f$ is a strongly involutive $S$-set, cf.\  Rem.\ \ref{rem:leftlift}.
\end{proof}

\section{The classifying topos of a left involutive semigroup}
\vspace{2ex}

Throughout this section $S$ denotes a \emph{left involutive} semigroup.
The main categories associated with $S$ are a category $\cX(S)$ of $*$-semigroups over $S\,$,
a small category $L(S)\,$ of copartial maps between projections of $S$,
and the topos $\B(S)$ of set-valued presheaves on $L(S)\,$.
For an inverse semigroup $S$ the topos $\B(S)$ is known to be a model
for the \emph{classifying topos} of $S\,$.
This is also true if $S$ is just
a left involutive semigroup, cf.\  Rem.\ \ref{rem:LS} below.
The main contribution of this section is the explicit construction of a functor
$\Lambda:\B(S)\ra\X(S)$,
a kind of enhanced \emph{Grothendieck construction}, made possible through the combinatorial description
of left involutive semigroups \'etale over $S$ as strongly involutive $S$-sets, cf.\  \S~\ref{subsec:action}.

\subsection{The categories $\cX(S)$ and $L(S)$}
\leavevmode\par\vspace{1ex}

Let $\cX(S)$ denote the following category associated with a left involutive semigroup $S$:
{\em objects} are $*$-homomorphism $f:X\ra S\,$,
where $X$ is a $*$-semigroup; {\em morphisms} $\psi:f\ra g$ are commutative triangles
\[
\xymatrix{X \ar[dr]_-f \ar[rr]^-\psi && Y \ar[dl]^-g\\
& S}
\]
where $\psi$ is a \emph{left $*$-homomorphism}.

\begin{remark}
The class of morphisms we have chosen for $\cX(S)$ is larger than the class representing
the objects. We must take left $*$-homomorphisms as morphisms and not $*$-homomorphisms,
for the purpose of describing the adjointness $\Lambda\adj\Gamma\,$,
because in general its counit is \emph{not\/} a $*$-homomorphism, so that Prop. \ref{prop:counit}
is valid with the proposed definition of $\cX(S)\,$.
\end{remark}

Let $L(S)$ denote the following category associated with a left involutive semigroup $S\,$:
its \emph{objects} are the projections of $S\,$,
and its \emph{morphisms} $s:d\ra e$ are elements $s\in S$ such that $\dd(s)=s^*s=d$ and $s=es\,$.
By Lem. \ref{lem:leftright} the latter means $\cc(s)\leq e$.
In other words, a morphism of $L(S)$
is a pair $(s,e)\in S\times P(S)$ satisfying $\cc(s)\leq e\,$.
The domain of $(s,e)$ is $\dd(s)$ and its codomain is $e\,$.
Note that the relation $\cc(s)\leq e$ may be viewed as a morphism $\cc(s):\cc(s)\ra e$ in $L(S)$.
%It follows that $L(S)$ is left cancellative.

\begin{remark}\label{rem:LS}
Composition of $s:d\ra e$ and $t:e\ra f$ in $L(S)$ is defined to be $ts:d\ra f$.
This is consistent because $S$ is corestrictive by Lem. \ref{lem:birestrictive} so that $\cc(ts)\leq\cc(t)$.
The following diagram depicts composition in $L(S)\,$:
\[
\xymatrix{d \ar[rd]_{s} \ar[r]^s &\cc(s) \ar[d]^{\cc(s)} \ar[r]^{t\cc(s)}& \cc(ts) \ar[d]^{\cc(ts)}\\
& e\ar[rd]_{t}\ar[r]^t & \cc(t)\ar[d]^{\cc(t)}\\
&& f}
\]
The horizontal arrows are elements $s\in S$ viewed as morphisms $\dd(s)\ra\cc(s)$ in $L(S)\,$,
the vertical arrows are inclusion relations between projections $e\in P(S)$ also viewed as morphisms in $L(S)\,$,
and the diagonal arrows are general morphisms in $L(S)\,$.
The whole diagram can be viewed as a commutative diagram in $L(S)\,$.

By Lem.\ \ref{lem:reduct}, $S$ is locally involutive,
and thus has an associated ordered groupoid $\sG(S)$ by Prop. \ref{prop:groupoid}.
Inside $\sG(S)$ the morphism $t\cc(s)$ may be viewed as $t$ restricted to $\cc(s)$.
Note that $\cc(s)\leq\dd(t)$ so that the product $ts\in S$ represents the composition
(in $L(S)$ as well in $\sG(S)$)
of $s$ with the restriction of $t$ to $\cc(s)\,$, cf.\  Rem.\ \ref{rem:extended}.
The upper horizontal line above shows that we have $\cc(t\cc(s))=\cc(ts)$ in $S\,$.
We invite the reader to deduce this identity directly from the definition of a left involutive semigroup and the hypothesis $\cc(s)\leq\dd(t)$, cf.\  also Rem.\ \ref{rem:three}.
\end{remark}

\subsection{The classifying topos $\B(S)$}\label{subsec:BS}
\leavevmode\par\vspace{1ex}

The category $L(S)$ is an example of the \emph{left cancellative category} associated with an \emph{ordered groupoid},
cf.\  Lawson \cite{L4}.
In particular, the classifying topos of $\sG(S)$ is equivalent to the topos $\B(S)$ of set-valued presheaves on $L(S)$
so that it is justified to call $\B(S)$ {\em the\/} classifying topos of $S\,$, cf.\  Prop.\ 1.14 and Prop. 2.37 \cite{F7}.
A typical object of $\B(S)$ is thus a functor
\[
P: L(S)^\op\ra \Set\,.
\]
For such a presheaf $P$ and any morphism $s:d\ra e$ in $L(S)$ we have a function
\[
P(s):P(e)\ra P(d)\,,
\]
which we call \emph{transition} in $P$ along $s\,$.
We use $x.s$ to denote $P(s)(x)$ when $x\in P(e)$ so that $x.s\in P(d)\,$.
If $d\leq e\,$, then there is a morphism $d:d\ra e$ in $L(S)$ and (by a slight abuse of notation)
we write $x.d$ for the \emph{restriction\/} of $x\in P(e)$ to $P(d)\,$.
For instance, if $x\in P(e)\,$, then $x.e=x\,$.
This is consistent with the terminology used for the ordered groupoid $\sG(S)$ constructed in Prop. \ref{prop:groupoid}.

\begin{example}\label{ex:rep}
The representable presheaf $\widehat{e}$ associated with a projection $e$
is given by
\[
\widehat{e}(d)=L(S)(d,e)=\{s\mid \dd(s)=d\,;\, \cc(s)\leq e\,\}\,.
\]
The Yoneda embedding provides a full and faithful functor
\begin{equation}\label{eq:Yoneda}
L(S)\ra\B(S)\;:\;e\mapsto\widehat{e}\,.
\end{equation}
\end{example}

\subsection{From $\B(S)$ to $\cX(S)$}
\leavevmode\par\vspace{1ex}

Let $P$ be a presheaf on $L(S)\,$. Define
\[
\Lambda(P) = \{(r,x) \mid r\in S\;\text{and}\;x\in P(\cc(r))\,\}\,,
\]
and
\begin{equation}\label{eq:etale}
f:\Lambda(P)\ra S\;:\;(r,x)\mapsto r\,.
\end{equation}
and
\[
(r,x)^*=(r^*,x.r)\,.
\]
Then $f:\Lambda(P)\ra S$ is an involutive $S$-set,
for the following right $S$-action:
\[
(r,x)s=(rs, x.\cc(rs))\,,
\]
which makes sense because $S$ is a corestrictive semigroup so that $\cc(rs)\leq\cc(r)\,$.
Equivariance and unit conditions
of an involutive $S$-set are straightforward to verify.

\begin{proposition}
The involutive set $\Lambda(P)$ equipped with the product
\[
(s,y)\cdot(r,x)=(s,y)r=(sr,y.\cc(sr))
\]
is a left involutive semigroup, and $f:\Lambda(P)\ra S$ is an \'etale $*$-homomorphism.
\end{proposition}
\begin{proof}
By Rem. \ref{rem:leftlift} it is enough to show that
in $\Lambda(P)$ the identity
\[
((r,x)s)^*=(\dd((r,x))s)^*\cdot (r,x)^*
\]
holds.
Note first that $\dd((r,x))=(\dd(r),x.r)$ and
\[
((\dd(r),(x.r))s)^*=(\dd(r)s,x.r\cc(\dd(r)s))^*
\]
so that the right hand side above equals $(\dd(r)s)^*r^*,x.rs)\,$.
On the other hand, the left hand side equals $((r,x)s)^*=(rs,x.\cc(rs))^*=((rs)^*,x.rs)\,$.
We are done since in any left involutive semigroup $S$ we have $(\dd(r)s)^*r^*=(rs)^*\,$.
\end{proof}

\begin{remark}[Projections in $\Lambda(P)$]\label{rem:projlamP}
The projections of $\Lambda(P)$ are precisely the pairs $(e,x)$
such that $e$ is a projection of $S\,$, cf.\  Lem.\ \ref{lem:reflect}.
Observe that many additional properties of $S$ are inherited by $\Lambda(P)$
via the \'etale $*$-homomorphism $f:\Lambda(P)\ra S\,$.
For instance, whenever projections in $S$ compose (e.g.\ in an inverse semigroup $S$)
then projections in $\Lambda(P)$ compose as well, which has
quite strong implications for $\Lambda(P)\,$: its projections form a left regular band by Lem.\ \ref{lem:birestrictive}.
However, certain properties are not inherited.
For instance, even if $S$ is an inverse semigroup, the semigroup $\Lambda(P)$ may not be inverse.
Two projections $(d,x)$ and $(e,y)$ in $\Lambda(P)$ commute if and only if
\[
(de,x.de)=(d,x)\cdot(e,y)=(e,y)\cdot(d,x)=(ed,y.ed)
\]
if and only the projections $e$ and $d$ commute and the restrictions of $x$ and $y$ coincide.
If $(e,x)$ is a projection of $\Lambda(P)\,$,
then the unique lifting of an equation $t=et$ in $S$ is the pair $(t,x.\cc(t))\,$ in $\Lambda(P)$. Indeed,
\[
(e,x)\cdot(t,x.\cc(t))=(e,x)t=(et,x.\cc(et))=(t,x.\cc(t))\,.
\]
\end{remark}

\begin{remark}[Functoriality]
Our construction yields a functor $\Lambda:\B(S)\ra\cX(S)\,$.
If $\gamma:P\ra Q$ is a natural transformation,
then $\Lambda(\gamma):\Lambda(P)\ra \Lambda(Q)$ is defined by
\[
\Lambda(\gamma)(r,x) = (r, \gamma_{\cc(r)}(x))\,.
\]
This is a $*$-homomorphism: $*$-preservation
$\Lambda(\gamma)((r,x)^*)=\Lambda(\gamma)(r,x)^*$ follows from the
naturality of $\gamma$ for $r:\dd(r)\ra \cc(r)\,$, and product compatibility
\[
\Lambda(\gamma)((s,y)\cdot(r,x))=\Lambda(\gamma)(s,y)\cdot\Lambda(\gamma)(r,x)
\]
follows from the naturality of $\gamma$ for
$\cc(sr)\leq \cc(s)\,$.
\end{remark}

\begin{remark}[Restriction to representable presheaves]\label{rem:rep}
Let us make explicit the restriction $S:L(S)\ra\cX(S)$ of $\Lambda:\B(S)\ra\cX(S)$
along the Yoneda embedding:
\[
\xymatrix{L(S) \ar[d]_-{\eqref{eq:Yoneda}} \ar[drr]^-{S}\\
\B(S) \ar[rr]^-\Lambda && {\cX(S)}}
\]
so that $S(e)=\Lambda(\widehat{e})$ for any $e\in P(S)\,$, cf.\  Ex.\ \ref{ex:rep}.
Unraveling the definitions yields
\[
S(e)=\{(r,s)\in S\times S\mid s\in L(S)(\cc(r),e)\}=\{ (r,s)\in S\times S\mid \dd(s)=\cc(r)\,,\; \cc(s)\leq e\}
\]
with product and involution given by
\[
(p,q)\cdot(r,s)=(pr, q.\cc(pr))\;;\;(r,s)^*=(r^*,sr)\,.
\]
Then $S(e)$ is a left involutive semigroup and $f_e:S(e)\ra S:(r,s)\mapsto r$ is an \'etale $*$-homomorphism.
By definition we get the following pullback square in sets

\begin{equation}\label{eq:SeS}
\xymatrix{S(e) \ar[d]_-{f_e} \ar[r]^-{(r,s)\mapsto s} & eS \ar[d]^-{\dd(-)}\\
S \ar[r]^-{\cc(-)} & P(S)}
\end{equation}
using that $eS=S\downarrow e=\{s\in S\mid \cc(s)\leq e\}$, cf.\  Rem.\ \ref{rem:coset}.
When we identify the projections of $S(e)$ with $eS$ (cf.\  Rem.\ \ref{rem:projlamP})
we have $\dd((r,s))=sr$ and $\cc((r,s))=s\,$.
Thus, the top map in \eqref{eq:SeS} is the codomain map for $S(e)\,$
and the pullback square also expresses the fact that $f_e:S(e)\ra S$ is \'etale, cf.\ Prop. \ref{prop:lift}.
We shall see below that if $S$ is an inverse semigroup,
then $S(e)$ is an inverse semigroup if and only if the projections of $S(e)$ commute
if and only if $eS$ is a left compatible subset of $S$ (Rem.\ \ref{rem:Seinv})
if and only if $f_e$ is injective, cf.\ Prop. \ref{prop:inv}.
\end{remark}

\begin{lemma}\label{lem:xirho}
Let $f:X\ra S$ be an \'etale $*$-homomorphism from a $*$-semigroup $X$ to
a left involutive semigroup $S$, and let $e$ be a projection of $S\,$.
If two left $*$-homomorphisms $S(e)\ra X$ over $S$ agree on $(e,e)\,$,
then they agree everywhere.
\end{lemma}
\begin{proof}
Let $\xi,\rho$ be two such left $*$-homomorphisms,
which are necessarily $*$-homomorphisms by Prop. \ref{prop:starhomo}, and assume that $s=es$ in $S\,$.
Use the equation
\[
(s,\cc(s))=(e,e)\cdot(s,\cc(s))
\]
in $S(e)$ and the unique lifting property of $f:X\ra S$
to conclude that $\xi$ and $\rho$ agree on $(s,\cc(s))\,$.
Therefore, $\xi$ and $\rho$ also agree on the domain
$\dd((s,\cc(s)))=(\dd(s),s)$
in $S(e)\,$.
Let $(r,s)$ be an arbitrary element of $S(e)\,$,
so that $\cc(r)=\dd(s)$ and $\cc(s)\leq e$.
We have $f_e(\xi(r,s))=r=f_e(\rho(r,s))\,$.
Use the equation
\[
(r,s)=(\dd(s),s)\cdot(r,s)
\]
and the unique lifting property of $f:X\ra S$ to conclude
that $\xi=\rho\,$.
\end{proof}

\section{The adjointness $\Lambda\adj\Gamma$}
\vspace{2ex}

Throughout this section $S$ denotes a left involutive semigroup.

\subsection{From $\cX(S)$ to $\B(S)$}
\leavevmode\par\vspace{1ex}

Let $f:X\ra S$ be a $*$-homomorphism with $*$-semigroup $X$, viewed
as an object of $\cX(S)$. We probe such an $f$
with left $*$-homomorphisms $f_e:S(e)\ra X$ over $S\,$.
Indeed, such an $f$ engenders a presheaf
\[
\Gamma(f):L(S)^\op\ra \Set
\]
such that $\Gamma(f)(e)=\cX(S)(f_e,f)\,.$
%\{\, \text{left $*$-homomorphisms}\; S(e)\ra X\;\text{over}\; S\,\}
Any $\alpha\in\Gamma(f)(e)$ satisfies $f\alpha=f_e\,$.
Transition in $\Gamma(f)$ along a morphism $s:d\ra e$ of $L(S)$ is given by precomposition with $S(s)$:
\[
(\alpha.s)(p,q)=\alpha(S(s)(p,q))=\alpha(p,sq)\,,
\]
where $(p,q)\in S(d)\,$.
A routine verification shows that $\Gamma:\cX(S)\ra \B(S)$ is a functor.
The aim of this section is to show that the functor $\Lambda$
is left adjoint to $\Gamma\,$,
thereby inducing an equivalence between the classifying topos $\B(S)$
and the category of left-involutive semigroups \'etale over $S\,$.

One way to establish an adjointness $\Lambda\adj\Gamma$ is by showing that
there is a natural bijection between natural transformations
\[
P\ra \Gamma(f)
\]
of presheaves on $L(S)\,$, and left $*$-homomorphisms over $S$
\[
\xymatrix{
\Lambda(P) \ar[dr] \ar[rr] && X \ar[dl]^-f\\
& S}\,.
\]
Another way is by exhibiting a unit natural transformation $\eta:\id_{\B(S)}\ra\Gamma\Lambda$
and a counit natural transformation $\epsilon:\Lambda\Gamma\ra\id_{\cX(S)}$ satisfying so-called
triangle identities.
We shall follow the second method.

\subsection{Unit and counit}
\leavevmode\par\vspace{1ex}

The unit associated with a presheaf $P$ is given by
\begin{equation}\label{eq:unitd}
\eta(P)_d:P(d)\ra \{\, \text{left $*$-homomorphisms}\; S(d)\ra \Lambda(P)\;\text{over}\;S\,\}\,,
\end{equation}
such that for $a\in P(d)$
\begin{equation}\label{eq:alphax}
\eta(P)_d(a)(r,s)=(r,a.s)\,;\;(r,s)\in S(d)\,.
\end{equation}
Note that $a.s\in P(\cc(r))$ because $\dd(s)=\cc(r)\,$.
We sometimes denote $\eta(P)_d(a)$ more simply by $\eta_d(a)\,$.
It is immediately verified that
$\eta_d(a)$ is a $*$-homomorphism.
Also note that by Prop.\ \ref{prop:starhomo} a left $*$-homomorphism $S(d)\ra \Lambda(P)$ over $S$
is necessarily a $*$-homomorphism
since $\Lambda(P)\ra S$ is an \'etale $*$-homomorphism.

We verify immediately that $\eta(P)$ is a natural transformation,
and that $\eta$ is natural, meaning that
the naturality square
\[
\xymatrix{
P \ar[rr]^-{\eta(P)} \ar[d]_-\gamma && \Gamma\Lambda(P) \ar[d]^-{\Gamma\Lambda(\gamma)}\\
Q \ar[rr]^-{\eta(Q)} && \Gamma\Lambda(Q)}
\]
commutes.

\begin{proposition}\label{prop:unitiso}
For any presheaf $P\,$, the unit $\eta(P)$ is an isomorphism.
\end{proposition}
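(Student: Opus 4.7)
I would prove that $\eta(P)$ is an isomorphism by constructing a componentwise inverse and invoking Lemma \ref{lem:xirho} for uniqueness.

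The plan is to define, for each idempotent $d$ of $S$, a candidate inverse
\[
\epsilon_d : \Gamma\Lambda(P)(d) \to P(d),
\]
as follows. Given a left $*$-homomorphism $\alpha : S(d) \to \Lambda(P)$ over $S$, the fact that $\alpha$ lies over $S$ forces $f(\alpha(d,d)) = d$; in view of the description of $f$ in \eqref{eq:etale}, this means $\alpha(d,d) = (d,a)$ for a unique $a \in P(\cc(d)) = P(d)$. Set $\epsilon_d(\alpha) = a$.

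First I would verify $\epsilon_d \circ \eta(P)_d = \mathrm{id}_{P(d)}$: for $a \in P(d)$, the formula \eqref{eq:alphax} gives $\eta(P)_d(a)(d,d) = (d, a\cdot d) = (d,a)$, since $d$ is an identity of $L(S)$ and $P$ is a presheaf, so $a \cdot d = a$. Thus $\epsilon_d(\eta(P)_d(a)) = a$.

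For the other direction, I need $\eta(P)_d(\epsilon_d(\alpha)) = \alpha$. Both sides are left $*$-homomorphisms $S(d) \to \Lambda(P)$ over $S$, and by construction they agree on $(d,d)$. Since $\Lambda(P) \to S$ is an étale $*$-homomorphism (as already established), Lemma \ref{lem:xirho} applies and forces $\eta(P)_d(\epsilon_d(\alpha)) = \alpha$. This gives the bijection at every object $d$, hence $\eta(P)$ is a natural isomorphism.

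The only step that requires genuine input beyond bookkeeping is the application of Lemma \ref{lem:xirho}: it is what turns the single equality on the generator $(d,d) \in S(d)$ into equality of the two left $*$-homomorphisms on all of $S(d)$. Everything else reduces to the definitional identity $a \cdot d = a$ and the unique lifting built into the definition of $\Lambda(P) \to S$.
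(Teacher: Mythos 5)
Your proposal is correct and follows essentially the same route as the paper: identify $\alpha(d,d)=(d,a)$ with $a\in P(d)$, note $\eta_d(a)(d,d)=(d,a\cdot d)=(d,a)$, and apply Lemma \ref{lem:xirho} (valid since $\Lambda(P)\to S$ is \'etale) to upgrade agreement at $(d,d)$ to equality of the two left $*$-homomorphisms. Phrasing it as an explicit componentwise inverse rather than injectivity plus surjectivity is only a cosmetic difference.
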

\begin{proof}
We have $\eta_d(a)(d,d)=(d,a.d)=(d,a)$ so that \eqref{eq:unitd} is injective.
Now suppose that $\alpha:S(d)\ra \Lambda(P)$ is a $*$-homomorphism over $S\,$.
We have $\alpha(d,d)=(d,a)$ for some $a\in P(d)$ because $\alpha(d,d)$
must be a projection of $\Lambda(P)\,$.
We have
\[
\alpha(d,d)=(d,a)=\eta_d(a)(d,d)\,.
\]
By Lem.\ \ref{lem:xirho}, we have $\alpha=\eta_d(a)\,$,
so that \eqref{eq:unitd} is surjective, whence an isomorphism.
\end{proof}

\begin{remark}
For instance, for any projection $e$ of $S$, the unit
\[
\eta(\widehat{e}):\widehat{e}\ra \Gamma\Lambda(\widehat{e})=\Gamma(S(e))
\]
is an isomorphism.
Thus, for any other projection $d$, the map induced by $S$
\[
L(S)(d,e)\ra \{\,\text{$*$-homomorphisms}\; S(d)\ra S(e)\;\text{over}\; S\,\},
\]
is a bijection. Therefore, the functor $S$ is full and faithful,
and the following diagram
\[
\xymatrix{L(S) \ar[d]_-{\eqref{eq:Yoneda}} \ar[drr]^-{S}\\
\B(S) && \ar[ll]_-\Gamma {\cX(S)}}
\]
commutes.
\end{remark}

We now turn to the \emph{counit}
\[
\xymatrix{
\Lambda(\Gamma(f)) \ar[dr] \ar[rr]^{\varepsilon_f} && X \ar[dl]^-f\\
& S}
\]
associated with an object $*$-homomorphism $f:X\ra S$ of $\cX(S)\,$.
We define
\[
\varepsilon_f(r,\xi)=\xi_{\cc(r)}(r,\cc(r))\,,
\]
where $\xi\in\Gamma(f)$, $(r,\cc(r))\in S(\cc(r))$ and $\xi_{\cc(r)}:S(\cc(r))\ra X$
is the left $*$-homomorphism over $S$ represented by $(r,\xi_{\cc(r)})\in\Lambda(\Gamma(f))\,$.

\begin{lemma}\label{lem:rsrs}
For any elements $r,s$ of a left involutive semigroup $S$
the following diagram  commutes in $L(S)\,$.
\[
\xymatrix{
\dd(rs)=\dd(\dd(r)s)\ar[r]^-{\dd(r)s}\ar[rd]_{\dd(r)s} \ar@/^2pc/[rr]^{rs}
& \cc(\dd(r)s) \ar[d] \ar[r]_{r\cc(\dd(r)s)} & \cc(rs)\ar[d]\\
& \dd(r)\ar[r]_r&\cc(r)}
\]
\end{lemma}
\begin{proof}
We have $\dd(rs)=(rs)^*(rs)=(\dd(r)s)^*r^*rs=(r^*rs)^*(r^*rs)=\dd(\dd(r)s)\,$.
\end{proof}

\begin{proposition}\label{prop:counit}
For each object $f:X\ra S$ of $\cX(S)\,$,
the counit $\varepsilon_f$ belongs to $\cX(S)\,$.
The maps $\varepsilon_f$ induce a natural transformation $\varepsilon:\Lambda\Gamma\ra\id_{\cX(S)}\,$.
\end{proposition}
\begin{proof}
We have to show that $\varepsilon_f:\Lambda(\Gamma(f))\ra X$ is a left $*$-homomorphism over $S$
knowing that $\xi_{\cc(r)}:S(\cc(r))\ra X$ is a left $*$-homomorphism
for each $(r,\xi_{\cc(r)})\in\Lambda(\Gamma(f))\,$.
We shall abbreviate $\xi_{\cc(r)}=\xi$ and $\xi_{\cc(s)}=\eta\,$.
Applying the various definitions we get
\begin{enumerate}%[(1)]
\item{$\varepsilon_f(r,\xi)^*=\xi(r,\cc(r))^*=\xi((r,\cc(r))^*)=\xi(r^*,r)\,$;}
\item{$\varepsilon_f((r,\xi)^*)=\varepsilon_f(r^*,\xi.r)=(\xi.r)(r^*,\dd(r))=\xi(r^*,r\dd(r))=\xi(r^*,r)\,$;}
\item{$\varepsilon_f((r,\xi)(s,\eta))=\varepsilon_f(rs,\xi.\cc(rs))=\xi(rs,\cc(rs)\cc(rs))=\xi(rs,\cc(rs))\,$;}
\item{$\varepsilon_f(r,\xi)\varepsilon_f(\dd(r,\xi)(s,\eta))=\varepsilon_f(r,\xi)\varepsilon_f((\dd(r),\xi.r)(s,\eta))\\
    =\varepsilon_f(r,\xi)\varepsilon_f(\dd(r)s,\xi.r\cc(d(r)s))\\
    =\xi(r,\cc(r))\xi(\dd(r)s,r\cc(\dd(r)s))=\xi(rs,\cc(rs))\,$,}
\end{enumerate}
where the last identification follows from Lem.\ \ref{lem:rsrs} and the fact that $\xi$ is a presheaf on $L(S)\,$.
Equating 1 and 2 as well as 3 and 4 shows that $\varepsilon_f$ is a left $*$-homomorphism as asserted.
A routine calculation verifies that $\varepsilon$ is natural transformation.
\end{proof}

Let us verify the triangle identity
\begin{equation}\label{eq:tri}
\xymatrix{
\Lambda(P) \ar@{=}[drr] \ar[d]_-{\Lambda(\eta(P))}\\
\Lambda\Gamma(\Lambda(P)) \ar[rr]^-{\varepsilon_{\Lambda P}} && \Lambda(P)}
\end{equation}
for a presheaf $P\,$.
Let $(r,a)\in \Lambda(P)$ so that $a\in P(\cc(r))\,$.
Then
\[
\Lambda(\eta(P))(r,a)=(r,\eta_d(a))\,,
\]
where $\eta_d(a):S(\cc(r))\ra \Lambda(P)$ is defined in \eqref{eq:unitd} and \eqref{eq:alphax}:
we have $\eta_d(a)(p,q)=(p,a.q)\,$.
Then
\[
\varepsilon_{\Lambda P}(r,\eta_d(a))=\eta_d(a)(r,\cc(r))=(r,a.\cc(r))=(r,a)\,.
\]

The other triangle identity
\[
\xymatrix{
\Gamma(f) \ar@{=}[drr] \ar[d]_-{\eta(\Gamma(f))}\\
\Gamma(\Lambda(\Gamma(f))) \ar[rr]^-{\Gamma(\varepsilon_{f})} && \Gamma(f)}
\;\;\;
\xymatrix{
\Gamma(f)(e) \ar@{=}[drr] \ar[d]_-{\eta(\Gamma(f))_e}\\
\Gamma(\Lambda(\Gamma(f)))(e) \ar[rr]^-{\Gamma(\varepsilon_{f})_e} && \Gamma(f)(e)}
\]
is also routinely verified.
In fact, let $s\in S$ be a projection, and $\xi:S(\cc(s))\ra X$ a left $*$-homomorphism over $S\,$.
Then the commutativity of the right triangle above comes down to
the commutativity of
\[
\xymatrix{
S(\cc(s))\ar[drr]^-\xi \ar[d]_-{\eta_{\cc(s)}(\xi)}\\
\Lambda(\Gamma(f)) \ar[rr]^{\varepsilon_{f}} && X}
\]
where $\eta_{\cc(s)}(\xi)(r,s)=(r,\xi.s)\,$: since $\cc(r)=\dd(s)$ we have
\[
\varepsilon_f(\eta_{\cc(s)}(\xi)(r,s))=\varepsilon_f(r,\xi.s)=(\xi.s)(r,\cc(r))
=\xi(r,s\cc(r))=\xi(r,s\dd(s))=\xi(r,s)\,.
\]

\subsection{Fiber presheaf and equivalence}
\leavevmode\par\vspace{1ex}

Like any adjointness with invertible unit the adjointness $\Lambda\dashv\Gamma$ induces
an equivalence of categories between $\B(S)$ and the full subcategory of $\cX(S)$ on those $f:X\ra S$
for which $\varepsilon_f$ is invertible.
The remainder of this section serves to show that the latter consists of
precisely the \emph{\'etale} $*$-homomorphisms $f:X\ra S$ between left involutive semigroups.

We start with giving an alternative, simpler description of the presheaf $\Lambda(f)$
for an \'etale $*$-homorphism $f\,$,
which we refer to as \emph{fiber presheaf\/} of $f$ and denote $P_f$ for distinctiveness,
although it turns out that $P_f$ and $\Lambda(f)$ are $*$-isomorphic, cf.\ Rem.\ \ref{rem:Pf}.

Let $e\in S$ be a projection and let
\[
P_f(e) = f^{-1}(e)\,.
\]
By Lem.\ \ref{lem:reflect} $f$ reflects (and preserves) projections
so that the fiber $P_f(e)$ consists entirely of projections. Transition in $P_f$ is
determined by the unique lifting property of an \'etale $*$-homomorphism,
cf.\ Prop.\ \ref{prop:lift}:
if $(s,e):d\ra e$ is a morphism
of $L(S)\,$, then for $u\in P_f(e)$ the unique lifting of $s=f(u)s$ in $S$
produces a unique $x\in X$ such that $x=ux$, and transition in $P_f$ is defined as
\[
u.s=\dd(x)\,.
\]
In a left involutive semigroup $\dd(x)$ is a projection, and we have
\[
f(u.s)=f(\dd(x))=\dd(f(x))=\dd(s)=d\,.
\]
This defines a presheaf on $L(S)$ because composition in $L(S)$ is defined by
$(s,e)(t,f)=(st,f)$, cf.\ Rem.\ \ref{rem:LS}.
In particular, transition in $P_f$
along an inclusion of projections $d\leq e$ is
the special case of transition along $d:d\ra e$ in $L(S)\,$.
We thus have a left involutive semigroup $\Lambda(P_f)$ whose elements
are pairs $(r,u)\in S\times P(X)$ such that $f(u)=\cc(r)\,$.
Involution and product in $\Lambda(P_f)$ are given by
\[
(r,u)^*=(r^*,u. r)\text{ and }(r,u)\cdot(s,v)=(rs,u.\cc(rs))\,.
\]

\begin{lemma}\label{lem:m}
Each \'etale $*$-homomorphism  $f:X\ra S$ between left involutive semigroups
induces a $*$-isomorphism $m:\Lambda(P_f)\ra X\,$
where $m(r,u)$ is the unique element of $X$ such that $f(m(r,u))=r$ and $\cc(m(r,u))=u\,$.
\end{lemma}
\begin{proof}
By Prop.\ \ref{prop:lift} the map $m$
is a bijection with inverse $m(r,u)\mapsto (r,u)\,$.
Since $(r,u)^*=(r^*,u.r)$ and $f(m(r,u)^*)=r^*$
and $\dd(u.r)=\cc(r^*)$ we have $m(r,u)^*=m((r,u)^*)\,$.
By Thm.\ \ref{thm:Sset} we get a $*$-isomorphism over $S\,$.
\end{proof}

\begin{proposition}\label{prop:varf}
A $*$-homomorphism $f:X\ra S$ from a $*$-semigroup $X$ to a left involutive semigroup $S$
has an invertible counit $\varepsilon_f$ if and only if $f$ is \'etale and $X$ is left involutive.
\end{proposition}
\begin{proof}
If the counit $\varepsilon:\Lambda\Gamma(f)\ra X$ is invertible,
then $X$ is left involutive by Ex.\ \ref{ex:leftstar}.
The inverse of the counit is also a bijective left $*$-homomorphism, and as such \'etale.
Therefore, $f:X\ra S$ is the composite of two \'etale left $*$-homomorphisms,
whence $f$ is an \'etale $*$-homomorphism by Prop. \ref{prop:starhomo}.

Conversely, if $f$ is an \'etale $*$-homomorphism between left involutive semigroups,
then $X$ is $*$-isomorphic to $\Lambda(P_f)$ by Lem.\ \ref{lem:m}.
By the triangle identity, the counit $\varepsilon_{\Lambda(P_f)}$ is then invertible.
The naturality of the counit implies that $\varepsilon_f$ is invertible as well.
\end{proof}

We thus recover by different means Thm.\ 2.44 of \cite{F7}.
In fact, Thm.\ \ref{thm:BSleft} below is an improvement on \cite{F7} in two respects:
it works over a general left involutive semigroup $S$ and it places the equivalence within
the broader context of the adjointness $\Lambda\adj \Gamma\,$.

\begin{svgraybox}
\begin{theorem}\label{thm:BSleft}
Let $S$ be a left involutive semigroup.
Then $\Lambda\adj \Gamma$ restricts to an equivalence
between the classifying topos $\B(S)$ and the full subcategory of $\cX(S)$
on \'etale $*$-homomorphisms $X\ra S$ between left involutive semigroups.
A left $*$-homomorphism between two such objects is an \'etale $*$-homomorphism.
\end{theorem}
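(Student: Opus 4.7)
My approach is to deduce the equivalence formally from the two structural results already in hand, namely Prop.\ \ref{prop:unitiso} and Prop.\ \ref{prop:varf}, and then to treat the morphism claim separately via Prop.\ \ref{prop:starhomo}.

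First I would invoke the standard fact from adjoint functor theory: for an adjunction $\Lambda\adj\Gamma$ whose unit is a natural isomorphism, the left adjoint $\Lambda$ is fully faithful, and the adjunction restricts to an equivalence between the source category and the full subcategory on those objects $f$ for which the counit $\varepsilon_f$ is an isomorphism. Prop.\ \ref{prop:unitiso} tells us that $\eta(P):P\ra\Gamma\Lambda(P)$ is always an isomorphism, so this general fact applies. Hence the only remaining task is to identify the essential image of $\Lambda$ with the full subcategory described in the theorem.

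I would establish this identification by two inclusions. For one direction, the earlier \S on \B(S) to \cX(S) shows that for every presheaf $P$ the structure map $\Lambda(P)\ra S$ is an \'etale $*$-homomorphism and $\Lambda(P)$ is a left involutive semigroup, so the essential image of $\Lambda$ is contained in the stated subcategory. For the reverse inclusion, Prop.\ \ref{prop:varf} asserts that $\varepsilon_f$ is a $*$-isomorphism whenever $f:X\ra S$ is \'etale and $X$ is left involutive, so every such $f$ is isomorphic (via $\varepsilon_f$) to $\Lambda(\Gamma(f))$ and thus lies in the essential image. This yields the desired equivalence.

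For the final sentence of the theorem, let $f:X\ra S$ and $g:Y\ra S$ be two objects of the subcategory and let $\psi:X\ra Y$ be a morphism in \cX(S), i.e.\ a left $*$-homomorphism over $S$. Since $g$ is an \'etale $*$-homomorphism and $f$ is a $*$-homomorphism, Prop.\ \ref{prop:starhomo} applies with $h=g$, giving at once that $\psi$ is multiplicative (hence a $*$-homomorphism) and, since $f$ is additionally \'etale, that $\psi$ is \'etale.

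The main point to verify carefully is that $\varepsilon_f$, which a priori is only a left $*$-homomorphism by Prop.\ \ref{prop:counit}, really provides an isomorphism in \cX(S)\,: but once $\varepsilon_f$ is bijective (Prop.\ \ref{prop:varf}) and multiplicative (Prop.\ \ref{prop:starhomo}, applied to the two \'etale objects $\Lambda\Gamma(f)\ra S$ and $f$), its set-theoretic inverse automatically inherits $*$-compatibility and multiplicativity and so is again a left $*$-homomorphism over $S\,$. This is not so much an obstacle as a bookkeeping step; apart from it, the argument is a formal consequence of the adjointness and the two propositions cited above.
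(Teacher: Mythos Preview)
Your proposal is correct and follows exactly the approach the paper itself takes: the paper gives no explicit proof of this theorem, treating it as an immediate consequence of Prop.\ \ref{prop:unitiso} (unit is an isomorphism), Prop.\ \ref{prop:varf} (counit is a $*$-isomorphism on the stated subcategory), the construction of $\Lambda$ (which lands in \'etale left involutive semigroups over $S$), and Prop.\ \ref{prop:starhomo} for the final sentence. Your write-up simply makes these implicit steps explicit.
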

\end{svgraybox}

\begin{corollary}\label{cor:BSleft}
Let $S$ be a left involutive semigroup.
Then the classifying topos $\B(S)$ and the category of left involutive $S$-sets
are equivalent.
\end{corollary}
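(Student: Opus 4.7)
The plan is to obtain the desired equivalence by composing two results already established: the categorical equivalence provided by Thm.\ \ref{thm:BSleft} and the isomorphism of categories provided by the second half of Thm.\ \ref{thm:Sset}. Since an inverse semigroup is in particular an involutive semigroup and hence a left involutive semigroup (Lem.\ \ref{lem:reduct}), the restricted part of Thm.\ \ref{thm:Sset} applies to $S$.

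First I would invoke the second half of Thm.\ \ref{thm:Sset} with the left involutive semigroup $S$: this yields an isomorphism between the category of left involutive $S$-sets (with $*$-morphisms over $S$) and the category whose objects are \'etale $*$-homomorphisms $f:X\to S$ with $X$ a left involutive semigroup and whose morphisms are $*$-homomorphisms over $S$. Next I would apply Thm.\ \ref{thm:BSleft}, which says that $\Lambda\adj\Gamma$ restricts to an equivalence between $\B(S)$ and the full subcategory of $\cX(S)$ on the same class of objects.

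The one point requiring a remark is the comparison of morphisms on the semigroup side of the two statements. The morphisms in Thm.\ \ref{thm:BSleft} are \emph{left} $*$-homomorphisms over $S$ (as this is how $\cX(S)$ is set up), while those coming out of Thm.\ \ref{thm:Sset} are genuine $*$-homomorphisms over $S$. However, Thm.\ \ref{thm:BSleft} adds the observation that any left $*$-homomorphism between two such \'etale objects is automatically an \'etale $*$-homomorphism, which is nothing but Prop.\ \ref{prop:starhomo} applied with $h$ and $f$ both \'etale $*$-homomorphisms. Hence the two hom-sets coincide, so the intermediate subcategory of $\cX(S)$ is literally the same category in both statements.

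The main (minor) obstacle is thus purely bookkeeping: checking that the morphism conventions agree, which we have just done. Composing the two equivalences yields the equivalence between $\B(S)$ and the category of left involutive $S$-sets.
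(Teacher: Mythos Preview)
Your proposal is correct and follows exactly the paper's own approach, which simply cites Thms.\ \ref{thm:Sset} and \ref{thm:BSleft}. Your additional remark reconciling the morphism conventions (left $*$-homomorphisms in $\cX(S)$ versus $*$-homomorphisms in Thm.\ \ref{thm:Sset}) via Prop.\ \ref{prop:starhomo} is a useful elaboration that the paper leaves implicit.
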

\begin{proof}
This follows from Thm.\ \ref{thm:BSleft} and Thm.\ \ref{thm:Sset}.
\end{proof}

\begin{corollary}\label{cor:bal}
Let $S$ be a left involutive semigroup.
Then the classifying topos $\B(S)$ and the category
of balanced $S$-sets are equivalent.
\end{corollary}
\begin{proof}
This follows from Cor.\ \ref{cor:BSleft} and Prop.\ \ref{prop:bal}.
\end{proof}

%\begin{svgraybox}
\begin{remark}
Kudryavtseva and Lawson \cite{KL} have also studied this aspect of $\B(S)$ establishing
for an inverse semigroup $S$ and by different means a result  (therein Thm.\ 1.1)
similar to Thm.\ \ref{thm:BSleft} above and its corollaries.
\end{remark}
%\end{svgraybox}

\begin{remark}\label{rem:Pf}
The \emph{fiber presheaf} $P_f$ associated with an \'etale $f$
coincides with the \emph{sections presheaf} $\Gamma(f)\,$, though $\Gamma(f)$
is defined for any $*$-homomorphism $f\,$,
not just an \'etale one.
In fact, for \'etale $f$ the transpose of the $*$-isomorphism $m:\Lambda(P_f)\ra X$ of Lem.\ \ref{lem:m}
\[
\widehat{m}=\Gamma(m)\cdot \eta(P_f):P_f\ra \Gamma(f)
\]
is an isomorphism of presheaves since $\eta(P_f)$ is (Prop.\ \ref{prop:unitiso}).
Its inverse is given by
\begin{equation}\label{eq:fPf}
\Gamma(f)(e)\ra P_f(e):\alpha\mapsto\alpha(e,e)
\end{equation}
where the (left) $*$-homomorphism $\alpha:S(e)\ra X$ is an element of $\Gamma(f)(e)\,$.
Lem.\ \ref{lem:xirho} says that $\alpha(e,e)$ completely determines $\alpha\,$.
Any element of the fibre $f^{-1}(e)$ is represented by such an $\alpha:S(e)\ra X\,$.
\end{remark}

\section{Involutive semigroups over an inverse semigroup}
\vspace{2ex}

Throughout, $S$ denotes an inverse semigroup
with subset of idempotents $E(S)=E\,$.
A left involutive semigroup is involutive if and only if $(uy)^*=y^*u$ for every projection $u$
and element $y\,$, cf.\  Prop.\ \ref{prop:commproj}.

\subsection{Involutive $\Lambda(P)$}
\leavevmode\par\vspace{1ex}

We can put the adjointness $\Lambda\adj \Gamma$ to work by using it
to identify those presheaves on $L(S)$  that correspond to \'etale $*$-homomorphisms $X\ra S$
for which $X$ is involutive.
In other words, we may use the adjointness to identify those presheaves $P$ for which $\Lambda(P)$
is involutive.
Equivalently, we may
identify $\Gamma(f)$ (or $P_f$ by Rem.\ \ref{rem:Pf})
for those \'etale $*$-homomorphisms $f:X\ra S$ for
which $X$ is involutive.

We remind the reader that we denote transition in a presheaf $P$
along a morphism $s:d\ra e$ by $x.s\,$, $x\in P(e)\,$.

\begin{proposition}\label{prop:inv}
For any presheaf $P$ on $L(S)\,$, the following are equivalent:
\begin{enumerate}%[{\em (1)}]
\item{projections commute in $\Lambda(P)\,$;}
\item{$\Lambda(P)$ is inverse;}
\item{$\Lambda(P)$ is involutive;}
\item{$P$ is a subobject of $1$ in the topos $\B(S)\,$;}
\item{$\Lambda(P)\ra S$ is injective.}
\end{enumerate}
\end{proposition}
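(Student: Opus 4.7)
The plan is to prove the five conditions equivalent via a cycle (1) $\Rightarrow$ (2) $\Rightarrow$ (3) $\Rightarrow$ (1) among the semigroup-theoretic conditions, together with (1) $\iff$ (5) $\iff$ (4) bringing in the topos-theoretic one. The key technical input throughout will be the explicit description of the projections of $\Lambda(P)$ and the formula for their products from Rem.\ \ref{rem:projlamP}.

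For the first cycle, (1) $\Rightarrow$ (2) is immediate from Prop.\ \ref{prop:commproj} applied to the left involutive semigroup $\Lambda(P)\,$, while (2) $\Rightarrow$ (3) is trivial since every inverse semigroup is involutive. For (3) $\Rightarrow$ (1) I would take two projections $p=(d,x)$ and $q=(e,y)$ of $\Lambda(P)$ and compute via Rem.\ \ref{rem:projlamP} that $pq=(de,\,x\cdot de)\,$. Since $de$ is a self-adjoint idempotent of $S$ and the transition $(x\cdot de)\cdot de=x\cdot de$ is trivial (transition along the identity morphism $de:de\to de$ of $L(S)$), the formula for $*$ in $\Lambda(P)$ gives $(pq)^*=pq\,$. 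If $\Lambda(P)$ is moreover involutive then $(pq)^*=q^*p^*=qp\,$, forcing $pq=qp\,$.

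For the second cluster, I would first observe that in the presheaf topos $\B(S)=\Set^{L(S)^{\op}}$ the terminal presheaf has value $\{*\}$ at every idempotent, so a subobject of $1$ is precisely a presheaf $P$ with $|P(e)|\leq 1$ for all $e\in E(S)\,$. On the other hand, the map $\Lambda(P)\to S\,$, $(r,x)\mapsto r\,$, has fiber $P(\cc(r))=P(rr^*)$ over $r$, and every idempotent of $S$ has the form $rr^*$ (take $r=e\,$, which is self-adjoint in an inverse semigroup), so injectivity is again equivalent to $|P(e)|\leq 1$ for every idempotent $e\,$. This yields (4) $\iff$ (5). To close the loop with (1), I would argue that (5) $\Rightarrow$ (1) because both $(d,x)(e,y)$ and $(e,y)(d,x)$ project to $de=ed$ in $S$ (idempotents commute in the inverse semigroup $S$) and must therefore agree; conversely, (1) $\Rightarrow$ (5) by applying the commutativity of $(d,x)$ and $(d,y)$ and reading off $x\cdot d=y\cdot d\,$, i.e.\ $x=y\,$, on each $P(d)\,$.

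No step in this plan is genuinely difficult. The one mildly delicate observation is the computation in (3) $\Rightarrow$ (1) showing that products of projections in $\Lambda(P)$ are automatically self-adjoint, so that the involutive identity $(pq)^*=q^*p^*$ collapses directly into commutativity of projections. Once this is in hand and the formulas of Rem.\ \ref{rem:projlamP} and the description of subobjects of $1$ in a presheaf topos are invoked, the remaining implications are one-line verifications.
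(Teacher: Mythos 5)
Your proof is correct, and all the ingredients you invoke (the description of projections of $\Lambda(P)$, the product formula $(d,x)(e,y)=(de,x\cdot de)$, and the commutation criterion $x\cdot de=y\cdot de$) are exactly those recorded in Rem.\ \ref{rem:projlamP}, so the computations are sound. Your decomposition, however, differs from the paper's. The paper runs a single cycle $1\Rightarrow2\Rightarrow3\Rightarrow4\Rightarrow5\Rightarrow1$, and its only substantial step is $3\Rightarrow4$: there one takes a projection $(e,a)$ against an \emph{arbitrary} element $(r,b)$, computes $((e,a)(r,b))^*=(r^*e,a\cdot er)$ versus $(r,b)^*(e,a)^*=(r^*e,b\cdot er)$, deduces $a\cdot er=b\cdot er$, and then specializes $r=e$ to conclude $|P(e)|\leq 1$. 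You instead close a small cycle $1\Rightarrow2\Rightarrow3\Rightarrow1$ by observing that a product of two projections of $\Lambda(P)$ is automatically self-adjoint (the transition along $de:de\to de$ being the identity), so the involutive law collapses to commutativity; you then attach $4\Leftrightarrow5$ via the fiber description $f^{-1}(r)\cong P(rr^*)$ and join the two clusters through $1\Leftrightarrow5$, where $1\Rightarrow5$ again comes down to comparing $(d,x)$ and $(d,y)$ over the same idempotent. The net effect is that your route never needs the mixed projection-times-general-element computation, which makes the ``collapse to a subterminal presheaf'' step slightly more economical; the paper's route, in exchange, records along the way the stronger commutation identity $a\cdot er=b\cdot er$ for arbitrary $r$, which is of independent computational interest but is not needed for the equivalence itself. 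Both arguments are complete and rest on the same facts about $\Lambda(P)$.
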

\begin{proof}
Let $P$ be a presheaf.

\noindent $1 \Rightarrow 2\,$. This holds by Prop.\ \ref{prop:commproj}.

\noindent $2\Rightarrow 3\,$.  This is well known.

\noindent $3\Rightarrow 4\,$.
Let $(e,a)$ be a projection of $\Lambda(P)\,$, so that $e$ is idempotent and $a\in P(e)\,$.
We have $(e,a)^*=(e,a)\,$.
For any other element $(r,b)$ we have
\begin{equation}\label{eq:aer}
((e,a)\cdot(r,b))^*=(er,a.\cc(er))^*=(r^*e,a.\cc(er)er)=(r^*e, a.er)\,,
\end{equation}
and
\begin{equation}\label{eq:ber}
(r,b)^*\cdot(e,a)^*=(r^*,b.r)\cdot(e,a)=(r^*e,(b.r).r^*e(r^*e)^*)=(r^*e,b.er)\,,
\end{equation}
where
\[
\xymatrix{
r^*er \ar[d]_-{er} \ar[r]^-{er} & e \\
rr^*}
\]
are generally different morphisms of $L(S)\,$.
Then \eqref{eq:aer} and \eqref{eq:ber} are equal just when
\begin{equation}\label{eq:aerr}
a.er = b.er\,.
\end{equation}
If \eqref{eq:aerr} holds for every $a\in P(e)$ and $b\in P(rr^*)\,$,
then taking $r=e$ we have
\[
a=a.e=b.e=b
\]
for every $a,b\in P(e)\,$.
Thus, $P(e)$ is either empty or consists of a single element,
so that $P$ is a subobject of $1$ in $\B(S)\,$.

\noindent $4\Rightarrow 5\,$.
If $P$ is a subobject of $1\,$,
then clearly $f:\Lambda(P)\ra S\,$, $f(r,u)=r\,$, is injective.

\noindent $5\Rightarrow 1\,$. This holds because projections commute in $S\,$.
\end{proof}

\begin{corollary}
For any left involutive semigroup $X\,$, and \'etale $*$-homomorphism $f:X\ra S\,$,
the following are equivalent:
\begin{enumerate}%[{\em (1)}]
\item{projections commute in $X\,$;}
\item{$X$ is inverse;}
\item{$X$ is involutive;}
\item{$\Gamma(f)\iso P_f$ is a subobject of $1\,$;}
\item{$f$ is injective.}
\end{enumerate}
\end{corollary}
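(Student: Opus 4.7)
The plan is to deduce this corollary directly from Proposition \ref{prop:inv} by transporting the equivalences across the $*$-isomorphism $m:\Lambda(P_f)\to X$ constructed in Lemma \ref{lem:m}, which lives over $S$ via $f$.

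First I would observe that, for any étale $*$-homomorphism $f:X\to S$ with $X$ left involutive, Lemma \ref{lem:m} provides a $*$-isomorphism $m:\Lambda(P_f)\to X$ making the triangle over $S$ commute. This is exactly the embodiment of Theorem \ref{thm:BSleft}: the object $f$ of $\cX(S)$ is (isomorphic to) $\Lambda(P_f)\to S$ for the presheaf $P_f$ on $L(S)$. So conditions (1), (2), (3) on $X$ are equivalent to the corresponding conditions on $\Lambda(P_f)$, since commuting projections, being inverse, and being involutive are all preserved and reflected by $*$-isomorphisms. Similarly, condition (5) that $f$ is injective is equivalent, via the bijection $m$, to injectivity of $\Lambda(P_f)\to S$. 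Condition (4) just names $P_f$ (identified with $\Gamma(f)$ by Remark \ref{rem:Pf}) and is identical to condition (4) of Proposition \ref{prop:inv}.

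With these identifications in place, the five conditions of the corollary correspond one-to-one with the five conditions of Proposition \ref{prop:inv} applied to the presheaf $P=P_f$. So the equivalences $(1)\Leftrightarrow(2)\Leftrightarrow(3)\Leftrightarrow(4)\Leftrightarrow(5)$ follow immediately from that proposition. No new calculation is needed.

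The only subtle point, and the main thing worth checking explicitly, is that $m$ really does carry the structural notions faithfully back and forth: that is, that a projection of $\Lambda(P_f)$ maps to a projection of $X$ (and conversely, using Lemma \ref{lem:reflect} to reflect right projections along $f$), that commutativity of projections is preserved, and that injectivity of $f$ corresponds to injectivity of the composite $\Lambda(P_f)\to S$. These are all immediate from $m$ being a $*$-isomorphism over $S$, so there is no real obstacle; the corollary is essentially a translation along the equivalence of Theorem \ref{thm:BSleft}.
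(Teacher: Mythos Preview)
Your proposal is correct and is essentially the paper's own argument spelled out in more detail: the paper's proof is the one-line citation ``Prop.\ \ref{prop:inv} and Thm.\ \ref{thm:BSleft}'', and your use of the concrete $*$-isomorphism $m:\Lambda(P_f)\to X$ from Lemma \ref{lem:m} together with Remark \ref{rem:Pf} is exactly what underlies that appeal to Thm.\ \ref{thm:BSleft}. The only minor comment is that invoking Lemma \ref{lem:reflect} is unnecessary here, since preservation and reflection of projections (and their commutation) is automatic from $m$ being a $*$-isomorphism.
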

\begin{proof}
Prop.\ \ref{prop:inv} and Thm.\ \ref{thm:BSleft}.
\end{proof}

\begin{corollary}\label{cor:Se}
For any idempotent $e\,$,
$S(e)$ is inverse if and only if $S(e)$ is involutive if and only if
the representable presheaf $\widehat{e}$ is a subobject of $1\,$.
\end{corollary}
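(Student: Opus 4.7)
The plan is to read off the result directly from Proposition \ref{prop:inv} applied to the representable presheaf $P = \widehat{e}$. The only substantive fact I need is the identification $\Lambda(\widehat{e}) = S(e)$, which was established in subsection \ref{subsec:rep} and summarized in the triangular diagram showing that $\Lambda$ and $S$ agree on the Yoneda image of $L(S)$. Once that identification is invoked, specializing $P = \widehat{e}$ in the equivalences 2), 3), 4) of Proposition \ref{prop:inv} yields word-for-word the statement of the corollary.

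Concretely, I would proceed in two steps. First, remark that by construction $\Lambda(\widehat{e}) = S(e)$ as left involutive semigroups over $S$. Second, specialize the chain 2) $\Leftrightarrow$ 3) $\Leftrightarrow$ 4) of Proposition \ref{prop:inv} to obtain $S(e)$ inverse $\Leftrightarrow$ $S(e)$ involutive $\Leftrightarrow$ $\widehat{e}$ is a subobject of $1$ in $\B(S)$. If illuminating, I might additionally unpack the last condition: $\widehat{e} \hookrightarrow 1$ means that $\widehat{e}(d) = \{s \in S \mid \dd(s) = d,\ es = s\}$ has at most one element for every idempotent $d$, which dovetails with the characterizations of $S(e)$ being inverse via injectivity of $\psi_e$ and $eS$ being a left compatible subset noted earlier.

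There is no real obstacle: the corollary is a pure specialization of the preceding proposition to a representable presheaf, and the identification $\Lambda(\widehat{e}) = S(e)$ is already in place. The only judgment call is how much of the unpacking of "$\widehat{e}$ is a subobject of $1$" to include, and whether to explicitly add the further equivalent conditions (commuting projections in $S(e)$; injectivity of $\psi_e$) that Proposition \ref{prop:inv} would also yield for free.
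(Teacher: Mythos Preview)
Your proposal is correct and matches the paper's approach exactly: the paper's proof is the single line ``This holds because $S(e)=\Lambda(\widehat{e})$,'' which is precisely your Step~1 followed by an implicit appeal to Proposition~\ref{prop:inv}. The additional unpacking you contemplate is optional elaboration beyond what the paper provides.
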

\begin{proof}
This holds because $S(e)=\Lambda(\widehat{e})\,$.
\end{proof}

\begin{remark}
Thus, the collection of
\'etale $*$-homomorphisms $X\ra S$ for which $X$ is involutive identifies the localic reflection of $\B(S)$
for this is precisely the frame of subobjects of $1\,$.
A sheaf on this locale may be identified
as a quotient of a coproduct of subobjects of $1\,$.
For instance, the underlying set of the coproduct
\[
f:X\oplus Y\ra S
\]
of two \'etale $*$-homomorphisms $X\ra S$ and $Y\ra S\,$,
where both $X$ and $Y$ are involutive, is the disjoint union of sets $X$ and $Y\,$.
The product of $u,v\in X\oplus Y$ is given by the unique element $uv$ such
that $uv=uu^*(uv)$ and $f(uv)=f(u)f(v)\,$.
For instance, if $u\in X\,$, then $uv\in X\,$.
The left involutive semigroup $X\oplus Y$ is involutive
if and only if $X$ and $Y$ are disjoint in the sense that the
commutative square
\[
\xymatrix{
0 \ar[r] \ar[d] & Y \ar[d]\\
X \ar[r] & S}
\]
is a pullback.
\end{remark}

\begin{remark}
The following concepts coincide.
In each case, the collection of all of them forms a frame,
identifying the localic reflection of $\B(S)\,$.
\begin{enumerate}%[(1)]
\item{An injective \'etale $*$-homomorphism $f:X\ra S$ for which $X$ is a left involutive semigroup.}
\item{An \'etale $*$-homomorphism $f:X\ra S$ for which $X$ is involutive.}
\item{An \'etale $*$-homomorphism $f:X\ra S$ for which $X$ is inverse.}
\item{An inverse subsemigroup $T\subseteq S$ that is also
a right ideal in the sense that
\[
\forall s,t\in S\,:\;t\in T\Rightarrow ts\in T\,.
\]
In fact, the \'etale property implies that the right ideal property holds for $t=d$ idempotent,
and then for general $t\in T$ conclude first that $t^*t\in T$ (since $T$ is an inverse subsemigroup),
whence $t^*ts\in T$ for any $s\,$, so that $ts=tt^*ts\in T$ again by closure under multiplication.}
\item{A right ideal $T\subseteq S$ that is closed under $*\,$.}
\item{A two-sided ideal $T\subseteq S\,$.
Since $t^*=t^*tt^*$ we see that a two-sided ideal is closed under $*\,$.}
\item{A `normal' subset $D\subseteq E$ of idempotents,
meaning one that is closed under conjugation:
\[
\forall s,d\,:\;d\in D\Rightarrow s^*ds\in D\,.
\]
If a subset $D\subseteq E$ is normal,
then $T=\{ s\mid s^*s\in D\,\}\subseteq S$ is a two-sided ideal:
for instance, if $s^*s\in D\,$, then $ss^*=s(s^*s)s^*\in D\,$,
showing that $T$ is closed under $*\,$.
If $T\subseteq S$ is a two-sided ideal,
then $D=E\cap T$ is a normal subset of $E\,$.}
\end{enumerate}
\end{remark}

\subsection{Left compatibility}
\leavevmode\par\vspace{1ex}

Let $S$ denote an inverse semigroup with idempotent poset $E\,$.
Consider the domain restriction $se$ of an element $s$ to an idempotent $e\,$.
The domain of $se$ is the infimum $s^*s\wedge e=s^*se\,$.
Two elements $s,t$ of $S$ are said to be {\em left compatible\/} \cite{L} if $st^*t=ts^*s\,$:
this says that restrictions of $s$ and $t$ to $s^*s\wedge t^*t$ are equal,
in which case the common restriction is the infimum $s\wedge t$ in the usual
ordering of $S\,$.
It follows that two elements $s,t$ are left compatible if and only if $st^*$
is an idempotent, in which case $st^*$ is the codomain of the common restriction $s\wedge t\,$.

\begin{remark}\label{rem:Seinv}
For brevity let $(p,q)(r,s)$ denote the product in the representable left involutive semigroup $S(e)$
(Rem.\ \ref{rem:rep}).
Observe that two projections $(s^*s,s)$
and $(r^*r,r)$ of $S(e)$ commute if and only if $r,s$ are left compatible elements of $S\,$.
In fact, this is verified by a straightforward calculation: we have
\[
(s^*s,s)(r^*r,r)=(s^*sr^*r,ss^*sr^*r)=(s^*sr^*r,sr^*r)
\]
and likewise
\[
(r^*r,r)(s^*s,s)=(r^*rs^*s,rs^*s)\,.
\]
These two are equal just when $sr^*r=rs^*s\,$, i.e.\ when $r$ and $s$ are left compatible.
Thus, $S(e)$ is inverse if and only if $S(e)$ is involutive if and only if
$eS$ is left compatible in the sense that every pair of elements
$r,s\in eS$ are left compatible.
\end{remark}

When does $(xy)^*=y^*x^*$ hold for two
elements $x,y$ of a representable left involutive semigroup
$S(e)\ra S\,$?
The answer is again left compatibility.

\begin{proposition}\label{prop:sym}
In $S(e)$ we have $((p,q)(r,s))^*=(r,s)^*(p,q)^*$
if and only if $s$ and $qp$ are left compatible in $S\,$.
\end{proposition}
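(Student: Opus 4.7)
The plan is to unfold both sides of $((p,q)(r,s))^* = (r,s)^*(p,q)^*$ using the definitions of product and involution in $S(e)$, simplify using the hypothesis that $S$ is inverse (so idempotents commute) together with the defining identities $s^*s=rr^*$ and $q^*q=pp^*$ for elements of $S(e)$, and show that the resulting equation in $S$ is equivalent to the left compatibility of $s$ and $qp\,$.

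First I would compute the product $(p,q)(r,s) = (pr, qprr^*p^*)$ and apply $^*$ to obtain
\[
((p,q)(r,s))^* = (r^*p^*,\, qprr^*p^*pr)\,.
\]
The second coordinate simplifies: using commutativity of idempotents in $S$ to swap $rr^*$ and $p^*p$, together with $pp^*p=p$ and $rr^*r=r\,$, it collapses to $qpr\,$. Hence the left side equals $(r^*p^*,qpr)\,$.

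Next I would compute
\[
(r,s)^*(p,q)^* = (r^*,sr)(p^*,qp) = (r^*p^*,\, sr\cdot r^*p^*pr)\,,
\]
and the same absorption pattern reduces the second coordinate to $sp^*pr\,$. Therefore the original equation is equivalent, at the level of $S\,$, to the single identity $qpr = sp^*pr\,$.

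Finally I would verify that this identity is equivalent to $s$ and $qp$ being left compatible. From $q^*q=pp^*$ one gets $(qp)^*(qp)=p^*pp^*p=p^*p\,$, while $s^*s=rr^*\,$, so the left compatibility condition $s(qp)^*(qp)=qp\,s^*s$ reads $sp^*p=qprr^*\,$. One direction is immediate by right multiplication by $r$; for the converse, right multiplication of $qpr=sp^*pr$ by $r^*$ gives $qprr^*=sp^*prr^*\,$, and then the substitution $rr^*=s^*s$ together with commutativity of idempotents and $ss^*s=s$ yields $sp^*prr^*=sp^*p\,$. The whole argument is computational and the main (mild) obstacle is keeping track of the idempotent factors during the simplifications; no deeper structural input is needed beyond the formulas defining $S(e)$ and the fact that idempotents of $S$ commute.
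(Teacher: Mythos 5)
Your proposal is correct and follows essentially the same route as the paper: unfold the product and involution in $S(e)$ to reduce both sides to $(r^*p^*,qpr)$ versus $(r^*p^*,sp^*pr)$, then use $s^*s=rr^*$, $q^*q=pp^*$ and commuting idempotents to show $qpr=sp^*pr$ is equivalent to $s(qp)^*qp=qp\,s^*s$. The multiplication by $r$ and $r^*$ in the last equivalence matches the paper's step $qpr=sp^*pr \Leftrightarrow qprr^*=sp^*prr^*$.
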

\begin{proof}
This follows from a straight-forward calculation:
\[
((p,q)(r,s))^*=(pr, qprr^*p^*)^*=((pr)^*, qprr^*p^*pr)
\]
\[
=(r^*p^*,qpp^*prr^*r)=(r^*p^*,qpr)
\]
and
\[
(r,s)^*(p,q)^*=(r^*,sr)(p^*,qp)=(r^*p^*,srr^*p^*pr)
\]
\[
=(r^*p^*,sp^*prr^*r)=(r^*p^*,sp^*pr)\,.
\]
Thus, $((p,q)(r,s))^*=(r,s)^*(p,q)^*$ if and only if $qpr=sp^*pr$
if and only if $qprr^*=sp^*prr^*$ if and only if
\begin{equation}\label{eq:qp}
qps^*s=sp^*ps^*s=ss^*sp^*p=sp^*p=s(qp)^*qp
\end{equation}
because $rr^*=s^*s\,$.
Also note that $p^*p=(qp)^*qp$ because $pp^*=q^*q\,$.
Equation \eqref{eq:qp} says precisely that $s$ and $qp$ are left compatible.
\end{proof}

\begin{remark}
Prop.\ \ref{prop:sym} says in other words that in $S(e)$ we have
\[
((p,q)(r,s))^*=(r,s)^*(p,q)^*
\]
if and only if the domain projection of $(p,q)\,$, which is $qp\,$, and the codomain of $(r,s)\,$,
which is $s\,$, are left compatible.
\end{remark}

\section{Involutive $S$-modules and $S$-algebras}\label{sec:inv}
\vspace{2ex}

Throughout, $S$ denotes an inverse semigroup.
We organize the theory around three concepts:
involutive $S$-set (already introduced in Def.\ \ref{def:Sset}),
involutive $S$-module (Def.\ \ref{def:invmod}),
and involutive $S$-algebra (Def.\ \ref{def:Salg}).
The balanced case is also important for all three concepts.
In the module context we shall use the balanced terminology instead of
the equivalent strong terminology (Prop.\ \ref{prop:bal}).

\subsection{Involutive $S$-modules}
\leavevmode\par\vspace{1ex}

\begin{definition}\label{def:invmod}
{\em An involutive $S$-module} is an involutive $S$-set $\psi:\cA\ra S$
(with action denoted $as$) such that:
\begin{enumerate}%[(1)]
\item{$\psi$ has a section $\zero:S\ra \cA\,$,
which is an equivariant $*$-morphism: $\psi(\zero(r))=r\,$, $\zero(s^*)=\zero(s)^*$ and $\zero(rs)=\zero(r)s\,$.}
\item{$\psi$ has a fiberwise commutative addition
\[
\cA\times_S\cA\ra \cA\;;\;(a,b)\mapsto a+b\,,
\]
over $S$ (so $\psi(a+b)=\psi(a)=\psi(b)$);}
\item{$a+\zero(\psi(a))=a$  ($0$-law);}
\item
$(a+b)r=ar+br$  (implicitly $\psi(a)=\psi(b)$ so $\psi(ar)=\psi(a)r=\psi(b)r=\psi(br)\,$);
\item{$(a+b)^*=a^*+b^*\,$.}
\end{enumerate}
{\em A balanced $S$-module\/} is a balanced $S$-set $\psi$ (Def.\ \ref{def:bal}) satisfying the above.
\end{definition}

\begin{remark}\label{rem:zero}
In particular, each fiber $\psi^{-1}(r)$ of an involutive $S$-module $\psi$ is a commutative monoid,
written additively with zero $\zero(r)\,$.
For instance, taking $a=\zero(r)$ in the $0$-law we have
$\zero(r)+\zero(r)=\zero(r)+\zero(\psi(\zero(r))=\zero(r)\,$.
\end{remark}

\begin{remark}\label{rem:bimod}
As in Rem.\ \ref{rem:bal} we may put the definition of a balanced $S$-module equivalently
as an involutive $S$-bimodule in the sense that
$\cA$ has left and right associative actions by $S$ that distribute over addition,
such that $(ra)s=r(as)$ and  $(as)^*=s^*a^*\,$,
and such that $\psi$ and $\zero$ are bi-equivariant.
In fact, if $\psi:\cA\ra S$ is an involutive $S$-module,
then we define a left action by
\[
ra=(a^*r^*)^*\,.
\]
Then Def.\ \ref{def:bal} 1 reads $(ra)s=r(as)\,$,
and $\psi$ is an involutive $S$-bimodule.
We often work with balanced $S$-modules this way.
\end{remark}

\begin{remark}
By  Prop.\ \ref{prop:bal}, when $S$ is inverse,
an involutive $S$-module is balanced if and only if
\[
(as)^*=(a^*\psi(as))^*\psi(a^*)
\]
holds.
This is the strong identity introduced in Def.\ \ref{def:strong}
and Rem.\ \ref{rem:leftlift}.
\end{remark}

\begin{definition}\label{def:Salg}
{\em An involutive $S$-algebra\/}
is an involutive $S$-module $\psi:\cA\ra S$ such that:
\begin{enumerate}%[(1)]
\item{$\cA$ is an involutive semigroup, written multiplicatively ($(ab)^*=b^*a^*$ and $aa^*a=a$);}
\item{$\psi$ and $\zero$ are (multiplicative) homomorphisms,
so that they are $*$-homomorphisms (because by definition $\psi$ and $\zero$ are $*$-morphisms);}
\item{the product distributes over addition:
$(a+b)c=ac+bc\,$, which makes sense because if $\psi(a)=\psi(b)\,$,
then $\psi(ac)=\psi(bc)\,$;}
\item{$(ab)r=a(br)\,$;}
\item{$a(br)^*=(ar^*)b^*\,$.}
\end{enumerate}
{\em A balanced $S$-algebra\/} is a balanced $S$-module satisfying the above.
\end{definition}

\begin{remark}
The other distributive law $a(b+c)=ab+ac$ automatically holds
because of the involution.
Also note that condition 5 is equivalent to $(ar)b=a(rb)\,$, for $rb=(b^*r^*)^*\,$.
\end{remark}

\begin{remark}\label{rem:canon}
A balanced $S$-module $\psi:\cA\ra S$ has a product \eqref{eq:ab}
resembling what is called a derivation in algebra.
In fact, since $\psi$ is left and right equivariant we have
\[
\psi(a\psi(b))=\psi(a)\psi(b)=\psi(\psi(a)b)\,,
\]
so it makes sense to define a product in $\cA\,$:
\begin{equation}\label{eq:ab}
ab=a\psi(b)+\psi(a)b\,.
\end{equation}
Then because the addition is fiberwise we have $\psi(ab)=\psi(a)\psi(b)\,$.
The product \eqref{eq:ab} is associative:
we have
\[
a(bc)=a\psi(bc)+\psi(a)(bc)
=a\psi(b)\psi(c)+\psi(a)(b\psi(c)+\psi(b)c)
\]
\[
=a\psi(b)\psi(c)+\psi(a)(b\psi(c))+\psi(a)\psi(b)c
\overbrace{=}^{\text{bal.}}a\psi(b)\psi(c)+(\psi(a)b)\psi(c)+\psi(a)\psi(b)c
\]
\[
=(a\psi(b)+\psi(a)b)\psi(c)+\psi(a)\psi(b)c
=ab\psi(c)+\psi(ab)c=(ab)c\,.
\]
The $0$-map $\zero$ is a homomorphism for \eqref{eq:ab} too:
\[
\zero(r)\zero(s)=\zero(r)\psi(\zero(s))+\psi(\zero(r))\zero(s)
=\zero(r)s+r\zero(s)
\]
\[
=\zero(rs)+\zero(rs)
=\zero(rs)\,\text{(Rem.\ \ref{rem:zero})}.
\]
Using the product \eqref{eq:ab} we also have
\begin{equation}\label{eq:a0r}
a\zero(r)
=a\psi(\zero(r))+\psi(a)\zero(r)
=ar + \zero(\psi(a)r)
=ar+\zero(\psi(ar))
=ar\,.
\end{equation}
In \eqref{eq:a0r} we have used that $\zero$ is a section of $\psi\,$,
that $\zero$ is also left equivariant, and the 0-law.
It follows from \eqref{eq:a0r} and associativity of  \eqref{eq:ab} that $a(br)=(ab)r\,$.
Observe that $(ab)^*=b^*a^*$ holds: the left action $ra$ is tautologically defined as $(a^*r^*)^*$
(Rem.\ \ref{rem:bimod}) so we have
\[
(ab)^*=(a\psi(b)+\psi(a)b)^*=(a\psi(b))^*+(\psi(a)b)^*
=\psi(b^*)a^*+b^*\psi(a^*)=b^*a^*\,.
\]
Finally, we have
\[
a(br)^*=a(b\zero(r))^*=a(\zero(r^*)b^*)=(a\zero(r^*))b^*=(ar^*)b^*\,.
\]
\end{remark}

\begin{remark}
Generally, we cannot expect that the product \eqref{eq:ab} makes a balanced $S$-module
into a (balanced) $S$-algebra because for instance distributivity over addition is not expected.
However, Prop.\ \ref{prop:idem} explains a special case when we do get an $S$-algebra.
\end{remark}

\begin{proposition}\label{prop:idem}
Let $\psi:\cA\ra S$ be a balanced $S$-module.
If addition in $\cA$ is idempotent {\em ($a+a=a$)\/},
then with the product \eqref{eq:ab} $\psi$
is a balanced $S$-algebra
such that $aa^*=a\psi(a^*)\,$.
\end{proposition}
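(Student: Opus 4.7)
The plan is to verify the requirements of Def.~\ref{def:Salg} one at a time, reusing Rem.~\ref{rem:canon} wherever possible and invoking the hypothesis $a+a=a$ precisely at the two spots where Rem.~\ref{rem:canon} stops short. Rem.~\ref{rem:canon} already supplies, with no use of idempotency, the associativity of the product \eqref{eq:ab}, the identities $(ab)^*=b^*a^*$, $(ab)r=a(br)$ and $a(br)^*=(ar^*)b^*$, the fiberwise equation $\psi(ab)=\psi(a)\psi(b)$, and the fact that $\zero$ is a $*$-homomorphism. So only three things remain: the partial isometry law $aa^*a=a$, distributivity $(a+b)c=ac+bc$, and the asserted formula $aa^*=a\psi(a^*)$. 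Conditions (1) and (2) of Def.~\ref{def:bal} are assumed on the underlying $S$-module and are not touched by imposing an algebra structure, so the resulting $S$-algebra is automatically balanced.

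For the formula $aa^*=a\psi(a^*)$, expand via \eqref{eq:ab} to get $aa^* = a\psi(a^*)+\psi(a)a^*$. By Def.~\ref{def:bal}(2) in its bimodule form (Rem.~\ref{rem:bal}) we have $\psi(a)a^*=a\psi(a^*)$, so the two summands coincide and idempotency collapses the sum to $a\psi(a^*)$. Building on this, for partial isometry apply \eqref{eq:ab} once more:
\[
(aa^*)a = (a\psi(a^*))\psi(a)+\psi(aa^*)\,a = a\!\cdot\!\dd(\psi(a)) + \cc(\psi(a))\!\cdot\!a,
\]
using $\psi(a^*)\psi(a)=\dd(\psi(a))$ and $\psi(aa^*)=\cc(\psi(a))$. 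The unit law Def.~\ref{def:Sset}(2) makes the first summand equal to $a$. For the second, the bimodule definition $\cc(\psi(a))\!\cdot\!a=(a^*\cc(\psi(a)))^*=(a^*\dd(\psi(a^*)))^*$ (since $\cc(\psi(a))=\dd(\psi(a^*))$), and the unit law applied to $a^*$ gives $(a^*)^*=a$. So $(aa^*)a=a+a$, which by idempotency is $a$.

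For distributivity, expanding both sides by \eqref{eq:ab} and using Def.~\ref{def:invmod}(4) together with $\psi(a+b)=\psi(a)=\psi(b)$ gives
\[
(a+b)c = a\psi(c)+b\psi(c)+\psi(a)c,\qquad ac+bc = a\psi(c)+\psi(a)c+b\psi(c)+\psi(b)c.
\]
Since $\psi(a)c$ and $\psi(b)c$ lie in the same fiber and equal each other, commutativity of fiberwise addition plus $a+a=a$ identifies the two expressions. The opposite distributive law $a(b+c)=ab+ac$ follows by applying involution and Def.~\ref{def:invmod}(5), using $(ab)^*=b^*a^*$ from Rem.~\ref{rem:canon}. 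This completes verification of Def.~\ref{def:Salg}, so $\psi$ is a balanced $S$-algebra with the promised formula. The main obstacle is the partial isometry step, since it is the one place where the balanced identity, the unit law, and idempotency must be combined and is not already prefigured by the derivation in Rem.~\ref{rem:canon}.
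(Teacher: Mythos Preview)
Your proof is correct and follows essentially the same approach as the paper: both invoke Rem.~\ref{rem:canon} for the properties not requiring idempotency, then verify distributivity and the partial-isometry law using $a+a=a$. The only difference is ordering: you establish $aa^*=a\psi(a^*)$ first (via Def.~\ref{def:bal}(2)) and then use it to get $(aa^*)a=a+a=a$, whereas the paper expands $aa^*a$ directly into a three-term sum $a+a+a$ and only afterwards records $aa^*=a\psi(a^*)$; your route is marginally cleaner but not substantively different.
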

\begin{proof}
Distributivity over addition: suppose $\psi(a)=\psi(b)=\psi(a+b)\,$.
Then we have
\[
(a+b)c=(a+b)\psi(c)+\psi(a+b)c=a\psi(c)+b\psi(c)+\psi(a)c
\]
\[
=a\psi(c)+b\psi(c) + (\psi(a)c + \psi(a)c)
=a\psi(c)+b\psi(c) + (\psi(a)c + \psi(b)c)
\]
\[
=a\psi(c)+\psi(a)c+b\psi(c)+\psi(b)c=ac+bc\,.
\]
When $\psi$ is balanced for every $a\in \cA\,$, $a\psi(a^*)$ is self-adjoint:
\[
a\psi(a^*)=(a\psi(a^*))^*=\psi(a)a^*\,.
\]
Therefore,
\[
\psi(a)(a^*\psi(a))=(\psi(a)a^*)\psi(a)=a\psi(a^*)\psi(a)=a\,.
\]
Then we have
\[
aa^*a=a\psi(a^*a)+\psi(a)a^*a
=a\psi(a^*)\psi(a)+\psi(a)(a^*\psi(a)+\psi(a^*)a)\,,
\]
\[
=a\psi(a^*)\psi(a)+\psi(a)(a^*\psi(a))+\psi(a)\psi(a^*)a
=a + a + a=a\,.
\]
Also
\[
aa^*=a\psi(a^*)+\psi(a)a^*=a\psi(a^*)+a\psi(a^*)=a\psi(a^*)\,.
\]
All other requirements are already established.
\end{proof}

\subsection{Free involutive $S$-modules}\label{subsec:free}
\leavevmode\par\vspace{1ex}

Suppose that $X$ is a left involutive semigroup and
\[
f:X\ra S
\]
is an \'etale $*$-homomorphism.
Thus, $f$ is a typical object of the topos $\B(S)\,$.
We shall construct the free balanced $S$-module on $f\,$.
Consider the collection of formal finite fiberwise sums of elements of $X\,$:
\[
F(f)=\{\, x_1+\cdots+x_n\mid f(x_1)=\cdots=f(x_n)\,\}\,.
\]
By definition, such a formal sum lies in the same fiber as its summands.
Thus, there is a map
\[
\widehat{f}:F(f)\ra S\;;\;f(x_1+\cdots+x_n)= f(x_1)=\cdots=f(x_n)\,.
\]
We define $(x_1+\cdots+x_n)^*=x_1^*+\cdots+x_n^*\,$.

We define an action by $S$ in $F(f)$ by means of
the canonical lifting action associated with an \'etale $*$-homomorphism
(Def.\ \ref{def:action} and Lem.\ \ref{lem:xfy}):
let $xs$ denote this action as in \S~\ref{subsec:action}.
Let $x=x_1+\cdots+x_n$ be an element of $F(f)\,$,
where $\forall i\,f(x_i)=r\,$.
Define
\[
xs=(x_1+\cdots+x_n)s=x_1s+\cdots+x_ns\,,
\]
where for every $i\,$, $x_is$ is the unique right lifting of $rs=rr^*rs=f(x_ix_i^*)rs\,$.
Thus, for every $i$ we have $f(x_is)=rs$ and $x_is=x_ix_i^*(x_is)\,$.

\begin{proposition}
$\widehat{f}:F(f)\ra S$ is a balanced $S$-set.
\end{proposition}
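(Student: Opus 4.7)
The plan is to reduce every axiom in the definition of a balanced $S$-set to its componentwise version on the summands, since the structure on $F(f)$ is defined summand-by-summand out of the canonical action of $S$ on $X\,$. The key input is that $f:X\to S$ itself, with its canonical lifting action, is already balanced: as $X$ is left involutive and $f$ is an \'etale $*$-homomorphism, the penultimate proposition of \S \ref{subsec:action} makes $f$ a left involutive $S$-set, and since $S$ is inverse, Prop.\ \ref{prop:bal} upgrades it to a balanced one.

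First I would check well-definedness of the structure on $F(f)\,$. The involution $(x_1+\cdots+x_n)^*=x_1^*+\cdots+x_n^*$ stays in $F(f)$ because $f(x_i^*)=f(x_i)^*$ is independent of $i\,$, and $\widehat{f}$ is accordingly a $*$-morphism. For the $S$-action, if $f(x_i)=r$ for all $i\,$, then each canonical lifting $x_is$ satisfies $f(x_is)=rs\,$, so $xs=x_1s+\cdots+x_ns\in F(f)$ and $\widehat{f}(xs)=rs=\widehat{f}(x)s\,$. Associativity $(xs)t=x(st)$ and the unit identity $x\,\dd(\widehat{f}(x))=x$ then reduce summand-by-summand to the corresponding identities for the canonical action on $X$ (the exercise after Def.\ \ref{def:action} and Lem.\ \ref{lem:xfy}).

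For the two balanced identities of Def.\ \ref{def:bal}, the crucial observation is that both sides of each identity are formal sums of the same length whose common fiber values agree (by iterated equivariance). So each identity on formal sums holds if and only if it holds componentwise on each $x_i\,$. Condition (1), $(x_i^*r^*)^*s=((x_is)^*r^*)^*\,$, holds on each summand by Lem.\ \ref{lem:four}, and condition (2), $(x_if(x_i^*))^*=x_if(x_i^*)\,$, holds by the ``self-adjoint'' half of Prop.\ \ref{prop:bal} applied to $f:X\to S\,$. Summing gives both identities on $F(f)\,$.

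The main obstacle is not conceptual but purely bookkeeping: at every step one has to verify that the operation preserves the shared-fiber condition so that the resulting formal sum is a genuine element of $F(f)\,$. Once this coherence is tracked, the balanced $S$-set structure on $\widehat{f}$ is inherited directly from the one on $f\,$.
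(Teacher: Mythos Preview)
Your argument is correct and is precisely the approach the paper takes: the paper's proof simply records that $\widehat{f}$ is a $*$-morphism, that $\widehat{f}$ is equivariant, and that ``the conditions of Def.\ \ref{def:bal} lift from $f$ to $\widehat{f}$''. You have merely unpacked this lifting explicitly, correctly identifying Prop.\ \ref{prop:bal} (via Lem.\ \ref{lem:four}) as the reason $f$ itself is balanced and then reducing each axiom summand-by-summand.
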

\begin{proof}
$\widehat{f}$ is a $*$-morphism, and $\widehat{f}$ is equivariant:
$\widehat{f}(xs)=rs=\widehat{f}(x)s\,$.
Moreover, the conditions of Def.\ \ref{def:bal} lift from $f$ to $\widehat{f}\,$.
\end{proof}

\begin{remark}
By definition, the fiber $\widehat{f}^{-1}(r)$ is the free commutative monoid on $f^{-1}(r)\,$.
\end{remark}

\begin{proposition}
For any left involutive semigroup $X$ and \'etale $*$-homomorphism $f:X\ra S\,$,
$\widehat{f}:F(f)\ra S$ is a balanced $S$-module.
\end{proposition}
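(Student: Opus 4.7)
The plan is to verify the five axioms of Def.~\ref{def:invmod} for $\widehat{f}\,$, building on the balanced $S$-set structure established in the preceding proposition. The key structural observation is that each fiber $\widehat{f}^{-1}(r)$ is the free commutative monoid on $f^{-1}(r)\,$, so axiom 2 (fiberwise commutative addition over $S$) comes for free, with addition given by concatenation of formal sums.

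The first step is to extend $F(f)$ to include the empty formal sum in each fiber: for each $r\in S$, define $\zero(r)$ as the additive identity of $\widehat{f}^{-1}(r)\,$. Then $\widehat{f}(\zero(r))=r$ trivially. The compatibility $\zero(s^*)=\zero(s)^*$ follows because the involution $(x_1+\cdots+x_n)^*=x_1^*+\cdots+x_n^*$ sends the empty sum in $\widehat{f}^{-1}(s)$ to the empty sum in $\widehat{f}^{-1}(s^*)\,$. Likewise, $\zero(r)s=\zero(rs)$ because the $S$-action is defined summand by summand and leaves the empty sum empty while relocating it to the fiber $\widehat{f}^{-1}(rs)\,$. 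This establishes axiom 1, and axiom 3 (the $0$-law $a+\zero(\widehat{f}(a))=a$) is then the monoid unit law within each fiber.

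Axioms 4 and 5 are immediate from the summand-wise definitions of the $S$-action and involution: if $x=x_1+\cdots+x_n$ and $y=y_1+\cdots+y_m$ share the same $\widehat{f}$-value, then $(x+y)r=x_1r+\cdots+x_nr+y_1r+\cdots+y_mr=xr+yr$, and analogously $(x+y)^*=x^*+y^*\,$.

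There is no substantive obstacle: given the preceding proposition and the free-commutative-monoid interpretation of the fibers, each axiom reduces to an elementary bookkeeping observation. The one point requiring notational care is the convention that $F(f)$ is understood to include the empty formal sum in each fiber, supplying the zero sections demanded by Def.~\ref{def:invmod}.
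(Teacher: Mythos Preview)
Your proposal is correct and follows exactly the same approach as the paper, which simply declares $\zero(r)$ to be the empty formal sum in the fiber over $r$ and then states that the conditions of Def.~\ref{def:invmod} are easily met. Your version is more explicit about each axiom and about the convention that empty sums are included, but the substance is identical.
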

\begin{proof}
Let $\zero(r)$ denote the empty formal sum in the fiber of $r\,$.
$F(f)$ has an obvious fiberwise commutative addition.
We see easily that all the conditions of Def.\ \ref{def:invmod} are met.
\end{proof}

We have a commutative diagram
\begin{equation}\label{eq:fhat}
\xymatrix{X \ar[rr]^-\rho \ar[dr]^-f && F(f) \ar[dl]^-{\widehat{f}}\\
& S}
\end{equation}
where $\rho(x)=x\,$.
Then $\rho$ is an equivariant $*$-morphism.

\begin{remark}
We would like to make a balanced $S$-algebra from the free
balanced $S$-module on $f$ using the product \eqref{eq:ab}.
By Prop.\ \ref{prop:idem} the idempotent quotient of $\widehat{f}$
ought to be an involutive $S$-algebra.
We shall see in \S~\ref{subsec:var} that it is.
\end{remark}

\subsection{A quotient of a free involutive $S$-module}\label{subsec:var}
\leavevmode\par\vspace{1ex}

Throughout, $X$ denotes a left involutive semigroup,
and $f:X\ra S$ an \'etale $*$-homomorphism.
We shall construct a quotient of the free balanced $S$-module on $f$
in which the addition is idempotent.
An element of this quotient is a formal sum $x_1+\cdots+x_n$ of elements of $X$ in which
every $x_i$ is distinct:
thus, we may identify $x_1+\cdots+x_n$ simply as a finite subset of $X\,$:
\[
A=\{x_1,\ldots,x_n\}\,.
\]
Addition is union of subsets.
However, in keeping with \S~\ref{subsec:free}
we shall continue to use additive notation:
thus $A+B$ means union.
Also we write a singleton subset $\{x\}$ just as $x$ in order to
conform with the additive notation.
Since the addition is idempotent, by Prop.\ \ref{prop:idem}
the quotient algebra is an involutive $S$-algebra with the product \eqref{eq:ab}.
There is a connecting left $*$-homomorphism $\rho$ (Prop.\ \ref{prop:rho}).

Let
\[
\widehat{F}(f)=\{ (r,A)\mid A\; \text{is finite and}\; A\subseteq f^{-1}(r)\,\}\,.
\]
Of course, the empty set $\emptyset$ is a finite set,
so for any $r\in S\,$, $(r,\emptyset)\in \widehat{F}(f)\,$.
If $A\not=\emptyset\,$, then we shall denote an element $(r,A)$ just by $A$
because $r$ is uniquely determined.
We denote $(r,\emptyset)$ by $\zero(r)\,$.
We define $\widehat{f}(r,A)=r\,$.
In particular, $\widehat{f}(\zero(r))=r\,$.
Thus, $\widehat{f}$ and $\zero$ are two functions as follows,
where $\zero$ is a section of $\widehat{f}\,$.
\begin{equation}\label{eq:hatf}
\xymatrix{S \ar[rr]^-{\zero} \ar@{=}[dr] && {\widehat{F}(f)} \ar[dl]^-{\widehat{f}} \\ & S}
\end{equation}
For $A\in \widehat{F}(f)$ let
$A^*=\{a^*\mid a\in A\,\}\,$.
Then $A^*$ is again an element of $\widehat{F}(f)\,$,
and $\widehat{f}(A^*)=\widehat{f}(A)^*\,$.
For any $A\in \widehat{F}(f)\,$, with $\widehat{f}(A)=r\,$, and any $s\in S$ let
\begin{equation}\label{eq:As}
As=\{ as \mid as\;\text{is the unique right lifting of}\; rs=f(aa^*)rs\,,\;a\in A\,\}\,.
\end{equation}
Any $as\in As$ satisfies $as=aa^*(as)$ in $X\,$.
Then $As$ is finite and
$\widehat{f}(As)=rs=\widehat{f}(A)s\,$.
For any $B\in \widehat{F}(f)\,$, such that $\widehat{f}(B)=s\,$, and any $r$ let
\[
rB=(B^*r^*)^*
\]
\[
=\{ (b^*r^*)^*\mid b^*r^*\;\text{is the unique right lifting of}\; (rs)^*=f(b^*b)(rs)^*\,,\,b\in B\,\}
\]
\[
=\{ rb \mid (rb)^*\;\text{is the unique right lifting of}\; (rs)^*=f(b^*b)(rs)^*\,,\,b\in B\,\}\,.
\]
Any such $b^*r^*$ satisfies $b^*r^*=b^*b(b^*r^*)$ in $X\,$,
and $f(b^*r^*)=(rs)^*\,$, so that $\widehat{f}(rB)=rs=r\widehat{f}(B)\,$.
We have $(rB)^*=B^*r^*\,$, and $(As)^*=s^*A^*\,$.

\begin{exercise}\label{ex:Ar}
For any $A\in \widehat{F}(f)\,$, with $\widehat{f}(A)=r\,$, show that $Ar^*r=A=rr^*A\,$.
Show that
$Ar^*=\{ aa^*\mid a\in A\,\}\,$.
Thus, $Ar^*=(Ar^*)^*=rA^*\,$.
Show that
\[
r^*A=\{ a^*a\mid a\in A\,\}=(r^*A)^*=A^*r\,.
\]
\end{exercise}

Thus, we have an involution and fiberwise addition (which is union) on $\widehat{F}(f)$
with right action \eqref{eq:As} making
\eqref{eq:hatf} a balanced $S$-module in which addition is idempotent.

We denote the product in $\widehat{F}(f)$ given generally by \eqref{eq:ab} as:
\begin{equation}\label{eq:rAsB}
AB=A\widehat{f}(B)+ \widehat{f}(A)B=As \overbrace{+}^{\text{union}} rB\,,
\end{equation}
where $\widehat{f}(A)=r\,$, $\widehat{f}(B)=s\,$,
and $\widehat{f}(AB)=rs\,$.
The addition is idempotent, so by Prop.\ \ref{prop:idem}, with product \eqref{eq:rAsB},
\eqref{eq:hatf} is a balanced $S$-algebra.

\begin{exercise}
Show that $As=A\zero(s)\,$, as predicted by the general theory (Rem.\ \ref{rem:canon}).
\end{exercise}

\begin{example}\label{eg:aa}
For any $A\in \widehat{F}(f)\,$, if $\widehat{f}(A)=r\,$, then we have
\[
A^*A=A^*r + r^*A=A^*r=r^*A\,,
\]
by Ex.\ \ref{ex:Ar}.
Then
\[
AA^*A=A(r^*A)=A\widehat{f}(r^*A)+\widehat{f}(A)r^*A=Ar^*r+rr^*A=A+A=A\,.
\]
Thus, every element of $\widehat{F}(f)$ is a partial isometry,
as predicted in Prop.\ \ref{prop:idem},
so that its underlying multiplicative semigroup
is an involutive semigroup.
\end{example}

\begin{exercise}[Projections of $\widehat{F}(f)$]
Show that $A\in \widehat{F}(f)$ is a projection ($A^*A=A$) if and only if every element of $A$ is a projection of $X\,$.
\end{exercise}

Diagram \eqref{eq:fhat} may be factored as follows,
where $\rho(x)=x\,$.
\begin{equation}\label{eq:fhat2}
\xymatrix{&& F(f) \ar@{>>}[d] \ar[ddl]|\hole \\
X \ar[urr]^-\rho \ar[rr]^-\rho \ar[dr]^-f && {\widehat{F}}(f) \ar[dl]^-{\widehat{f}}\\
& S}
\end{equation}
Just as in the free case $\widehat{f}$ is a $*$-homomorphism,
and $\rho$ is an injective $*$-morphism.
Both maps are equivariant as before.

\begin{proposition}\label{prop:rho}
$\rho:X\ra \widehat{F}(f)$ is a left $*$-homomorphism
with respect to the product \eqref{eq:rAsB} in $\widehat{F}(f)\,$.
\end{proposition}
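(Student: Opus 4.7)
The plan is to verify directly the two conditions defining a left $*$-homomorphism: that $\rho(x^*)=\rho(x)^*$ and that $\rho(xy)=\rho(x)\rho(\dd(x)y)$ in $\widehat{F}(f)$ for all $x,y\in X$. The first is immediate from the definition of involution on finite subsets, since $\rho(x^*)=\{x^*\}=\{x\}^*=\rho(x)^*$, and $\rho$ is already noted to be equivariant.

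For the multiplicative condition, expanding the right-hand side via \eqref{eq:rAsB} with $A=\{x\}$, $B=\{\dd(x)y\}$, $r=f(x)$ and $s=f(\dd(x)y)$ yields
\[
\rho(x)\rho(\dd(x)y)\;=\;\{x\}s\;+\;r\{\dd(x)y\}.
\]
The plan is to compute the two summands separately and show that each equals the singleton $\{xy\}$; since addition in $\widehat{F}(f)$ is union (hence idempotent), the sum then collapses to $\{xy\}=\rho(xy)$. For the first summand, $\{x\}s=\{x\cdot s\}$ where $xs$ is the unique element of $X$ with $\cc(x)(xs)=xs$ and $f(xs)=f(x)s$. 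Lem.\ \ref{lem:xfy} delivers exactly $xf(\dd(x)y)=xy$, so $\{x\}s=\{xy\}$.

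For the second summand, $r\{\dd(x)y\}=\bigl(\{(\dd(x)y)^*\}f(x)^*\bigr)^{\!*}$ by the definition of the left action, so the task reduces to identifying the unique right lifting of $f(x)^*$ at $(\dd(x)y)^*$. The natural candidate is $(xy)^*$: its $f$-image is $f((xy)^*)=f(xy)^*=f(\dd(x)y)^*f(x)^*=f((\dd(x)y)^*)f(x)^*$, as required, using that $f$ is a left $*$-homomorphism. For the absorption condition $\cc((\dd(x)y)^*)\cdot (xy)^*=(xy)^*$, I set $p=\cc((\dd(x)y)^*)=\dd(\dd(x)y)$, apply the general absorption law $p\cdot(\dd(x)y)^*=(\dd(x)y)^*$, and invoke the left-involutive identity $(xy)^*=(\dd(x)y)^*x^*$ to conclude $p\cdot(xy)^*=(\dd(x)y)^*x^*=(xy)^*$. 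By uniqueness of liftings (\'etaleness of $f$, via Prop.\ \ref{prop:lift}), $(xy)^*$ is the lifting; hence $r\{\dd(x)y\}=\{xy\}$.

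The main obstacle is the second summand: translating the definition of the left action into the language of right liftings and then recognising $(xy)^*$ as the correct lift requires careful use of the left-involutive identity together with the absorption law. Once this identification is in place, the idempotency of union in $\widehat{F}(f)$ collapses $\{xy\}+\{xy\}$ to $\{xy\}=\rho(xy)$, completing the proof.
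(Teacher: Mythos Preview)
Your proposal is correct and follows essentially the same strategy as the paper: unfold the product \eqref{eq:rAsB} on singletons, show each of the two summands equals $\{xy\}$, and collapse via idempotent union. The only difference is in the second summand. The paper exploits that $f$ is a genuine $*$-homomorphism (not merely left) and invokes the multiplicative clause of Lem.~\ref{lem:xfy} to get $(\dd(x)y)^*f(x^*)=(\dd(x)y)^*x^*=(xy)^*$ in one stroke; you instead verify by hand that $(xy)^*$ satisfies the two defining conditions of the right lifting, using the left-involutive identity for absorption. Both routes are valid; the paper's is a bit shorter, while yours makes the lifting mechanism explicit.
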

\begin{proof}
To say that $\rho$ is a left homomorphism is to say that for any $x,y\in X$ we have
\begin{equation}\label{eq:xyxy}
xy=xf(x^*xy)+f(x)x^*xy\,,
\end{equation}
where by definition $f(x)x^*xy$ means $((x^*xy)^*f(x)^*)^*\,$.
By Lem.\ \ref{lem:xfy} we have $xf(x^*xy)=x(x^*xy)=xy\,$.
By Lem.\ \ref{lem:xfy} again we also have
\[
(x^*xy)^*f(x^*)=(x^*xy)^*x^*=(xy)^*\,,
\]
so that $f(x)x^*xy=((x^*xy)^*f(x)^*)^*=xy\,$.
This proves \eqref{eq:xyxy} because $xy+xy=xy\,$.
\end{proof}

A morphism of \'etale left involutive semigroups over $S$ passes
to an involutive $S$-algebra morphism in the usual way.
\begin{equation}\label{eq:x1n}
\xymatrix{
X \ar[d]_-\rho \ar[rr]^-{\varphi} \ar[ddr]|\hole^<<<<<<f && Y \ar[d]^-\rho \ar[ddl]|\hole_<<<<<<{g}\\
{\widehat{F}}(f)\ar[dr]_-{\widehat{f}}  \ar[rr]^-{\widehat{\varphi}} && {\widehat{F}}(g)   \ar[dl]^-{\widehat{g}}\\
& S}
\widehat{\varphi}(x_1+\cdots+x_n)\equiv\varphi(x_1)+\cdots+\varphi(x_n)
\end{equation}
Of course, when thinking of $x_1+\cdots+x_n$ as a subset $\{x_1,\ldots,x_n\}$
then the subset $\widehat{\varphi}(x_1+\cdots+x_n)$ is
given by the image $\{\,\varphi(x_i)\mid 1\leq i\leq n\,\}\,$:
this is what $\equiv$ in \eqref{eq:x1n} means.

We pass an involutive $S$-algebra $\psi:\cA\ra S$ to a presheaf on $L(S)$
by probing it with left $*$-homomorphisms just as we did with semigroups.
\begin{equation}
\xymatrix{
S(e) \ar[dr]  \ar[rr] && \cA  \ar[dl]_-{\psi}\\
& S}
\end{equation}
For such a $\psi$ a presheaf
\[
\Gamma(\psi)(e)=\{\, \text{left $*$-homomorphisms}\; S(e)\ra \cA\;\text{over}\; S\,\}
\]
is defined.
Ultimately, we have a functor $\Gamma$ from involutive $S$-algebras to $\B(S)\,$,
depicted in \eqref{eq:fun}.
\begin{equation}\label{eq:fun}
\xymatrix{& S\text{-Alg} \ar@/^2ex/[dr]^-{\Gamma}  \\
\text{\'Etale}/S  \ar@/^2ex/[ur]^-{\widehat{F}} \ar@/^2ex/[rr]^-\Gamma   && \ar@/^2ex/[ll]^-\Lambda \B(S) }
\end{equation}
The functors $\Lambda\adj \Gamma$ depicted horizontally in \eqref{eq:fun} form an equivalence.

\end{document}